\providecommand{\U}[1]{\protect\rule{.1in}{.1in}}
\newtheorem{lemma}{Lemma}[section]
\newtheorem{example}{Example}[section]
\newtheorem{assumption}{Assumption}[section]
\newtheorem{definition}{Definition}[section]
\newtheorem{thm}{Theorem}[section]
\newtheorem{corollary}{Corollary}[section]
\numberwithin{equation}{section}
\DeclareMathOperator*{\ve}{vec}
\DeclareMathOperator*{\vech}{vech}
\DeclareMathOperator*{\tr}{tr}
\begin{document}

\title{Estimation of a Multiplicative Correlation Structure in the Large Dimensional
Case}
\author{Christian M. Hafner\thanks{Institut de statistique, biostatistique et sciences
actuarielles, and CORE, Universit\'e catholique de Louvain, Louvain-la-Neuve,
Belgium. Email: \texttt{christian.hafner@uclouvain.be}.}\\Universit\'{e}{\small \ }catholique de Louvain
\and Oliver B. Linton\thanks{Faculty of Economics, Austin Robinson Building,
Sidgwick Avenue, Cambridge, CB3 9DD. Email: \texttt{obl20@cam.ac.uk}.}\\University of Cambridge
\and Haihan Tang\thanks{Corresponding author. Fanhai International School of
Finance and School of Economics, Fudan University, 220 Handan Road, Yangpu District, Shanghai, 200433, China. Email:
\texttt{hhtang@fudan.edu.cn}.}\\Fudan University}
\date{\today}
\maketitle

\begin{abstract}
\noindent We propose a Kronecker product model for correlation or covariance matrices in
the large dimensional case. The number of parameters of the model increases
logarithmically with the dimension of the matrix. We propose a minimum
distance (MD) estimator based on a log-linear property of the model, as well
as a one-step estimator, which is a one-step approximation to the
quasi-maximum likelihood estimator (QMLE). We establish rates of convergence
and central limit theorems (CLT) for our estimators in the large dimensional
case. A specification test and tools for Kronecker product model selection and
inference are provided. In a Monte Carlo study where a Kronecker product model
is correctly specified, our estimators exhibit superior performance. In an
empirical application to portfolio choice for S\&P500 daily returns, we
demonstrate that certain Kronecker product models are good approximations to
the general covariance matrix.

\begin{description}
\item[Keywords:] Correlation matrix, Kronecker product, matrix logarithm; multiway array data; portfolio choice.

\item[JEL classification] C55; C58; G11.

\end{description}

\end{abstract}


\setcounter{page}{0}%
\thispagestyle{empty}%
\newpage

\section{Introduction}

Covariance and correlation matrices are of great importance in many fields. In
finance, they are a key element in portfolio choice and risk management. In
psychology, scholars have long assumed that some observed variables are
related to the key unobserved traits through a factor model, and then use the
covariance matrix of the observed variables to deduce properties of the latent
traits. \cite{anderson1984} is a classic statistical reference that studies
the estimation of covariance matrices and hypotheses testing about them in the
low dimensional case (i.e., the dimension of the covariance matrix, $n$, is
small compared with the sample size $T$).

More recent work has considered the case where $n$ is large along with $T$.
This is because many datasets now used are large. For instance, as finance
theory suggests that one should choose a well-diversified portfolio that
perforce includes a large number of assets with non-zero weights, investors
now consider many securities when forming a portfolio. The listed company
Knight Capital Group claims to make markets in thousands of securities
worldwide, and is constantly updating its inventories/portfolio weights to
optimize its positions. If $n/T$ is not negligible when compared to zero but
still less than one, we call this the \textit{large dimensional} case in this
article. (We reserve the phrase "the high dimensional case" for $n>T$.) The
correct theoretical framework to study the large dimensional case is to use
the joint asymptotics (i.e., both $n$ and $T$ diverge to infinity
simultaneously albeit subject to some restriction on their relative growth
rate), not the usual asymptotics (i.e., $n$ fixed, $T$ tends to infinity
alone). Standard statistical methods such as principal component analysis
(PCA) and canonical-correlation analysis (CCA), do not directly generalize to
the large dimensional case; applications to, say, portfolio choice, face
considerable difficulties (see \cite{wangfan2016}).

There are many new methodological approaches for the large dimensional case,
for example \cite{ledoitwolf2003}, \cite{bickellevina2008}, \cite{onatski2009}%
, \cite{fanfanlv2008}, \cite{ledoitwolf2012} \cite{fanliaomincheva2013}, and
\cite{ledoitwolf2015}. \cite{yaozhengbai2015} gave an excellent account of the
recent developments in the theory and practice of estimating large dimensional
covariance matrices. Generally speaking, the approach is either to impose some
sparsity on the covariance matrix, meaning that many elements of the
covariance matrix are assumed to be zero or small, thereby reducing the number
of parameters to be estimated, or to use some device, such as shrinkage or a
factor model, to reduce dimension. Most of this literature assumes i.i.d. data.

We consider a parametric model for the covariance or correlation matrix - the
Kronecker product model. For a real symmetric, positive definite $n\times n$
matrix $\Delta$, a \textit{Kronecker product model} is a family of $n\times n$
matrices $\{\Delta^{\ast}\}$, each of which has the following structure:
\begin{equation}
\Delta^{\ast}=\Delta_{1}^{\ast}\otimes\Delta_{2}^{\ast}\otimes\cdots
\otimes\Delta_{v}^{\ast}, \label{eqn model}%
\end{equation}
where $\Delta_{j}^{\ast}$ is an $n_{j}\times n_{j}$ dimensional real
symmetric, positive definite \textit{sub-matrix} such that $n=n_{1}%
\times\cdots\times n_{v}$. We require that $n_{j}\in\mathbb{Z}$ and $n_{j}%
\geq2$ for all $j$; the $\{n_{j}\}_{j=1}^{v}$ need not be distinct. We suppose
that $\Delta$ is the covariance or correlation matrix of an observable series
with sample size $T$ and $\{\Delta^{\ast}\}$ is a model for $\Delta$.

We study the Kronecker product model in the large dimensional case. Since $n$
tends to infinity in the joint asymptotics, there are two main cases: (1)
$n_{j}\rightarrow\infty$ for $j=1,\ldots,v$ and $v$ is fixed; (2)
$\{n_{j}\}_{j=1}^{v}$ are all fixed and $v\rightarrow\infty.$ We shall study
case (2) in detail because of its dimensionality reduction property. In this
case, the number of parameters of a Kronecker product model grows
\textit{logarithmically} with $n$. In particular, we show that a Kronecker
product model {induces a type of sparsity} on the covariance or correlation
matrix: The logarithm of a Kronecker product model has many zero elements, so
that sparsity is explicitly imposed on the logarithm of the covariance or
correlation matrix - we call this \textit{log sparsity}.

The Kronecker product model has a number of intrinsic advantages for
applications. The eigenvalues of a Kronecker product are products of the
eigenvalues of its sub-matrices, which in the simplest case are obtainable in
closed form.
In the large dimensional case the eigenvalue distribution can be quite
general, and there is no spikedness property as in strict factor models
(\cite{johnstoneonatskiy2018}). The inverse covariance matrix, its
determinant, and other key quantities are easily obtained from the
corresponding quantities of the sub-matrices, which facilitates computation
and analysis.

We primarily focus on correlation matrices rather than covariance matrices.
This is partly because the asymptotic theory for the correlation matrix model
nests that for the covariance matrix model, and partly because this will allow
us to adopt a more flexible approach to approximating a general covariance
matrix: We can allow the diagonal elements of the covariance matrix to be
unrestricted (and they can be estimated by other well-understood methods). In
practice, fitting a correlation matrix with a Kronecker product model tends to
perform better than doing so for its corresponding covariance matrix. To avoid
confusion, we would like to remark that if a Kronecker product model is
correctly specified for a correlation matrix, its corresponding covariance
matrix need not have a Kronecker product structure, and vice versa. In other
words, log sparsity on a correlation matrix does not necessarily imply that
its corresponding covariance matrix has log sparsity, and vice versa.

We show that the logarithm of a Kronecker product model is linear in its
unknown parameters. We use this as a basis to propose a minimum distance (MD)
estimator that is in closed form. We establish a crude upper bound rate of
convergence for the MD estimator under the joint asymptotics, but we
anticipate that this bound could be improved with better technology and we
leave this for future research. There is a large literature on the optimal
rate of convergence for estimation of high-dimensional covariance matrices and
inverse (i.e., \textit{precision}) matrices (see \cite{caizhangzhou2010} and
\cite{caizhou2012}). \cite{cairenzhou2014} gave a nice review on those recent
results. However their optimal rates are not applicable to our setting because
here sparsity is not imposed on the covariance or correlation matrix, but on
its logarithm. In addition, we allow for weakly dependent data, whereas the
above cited papers all assume i.i.d. structures.

Next, we discuss a quasi-maximum likelihood estimator (QMLE) and a one-step
estimator, which is an approximate QMLE. Under the joint asymptotics, we
provide feasible central limit theorems (CLT) for the MD and one-step
estimators, the latter of which is shown to achieve the parametric efficiency
bound (Cramer-Rao lower bound) in the fixed $n$ case. When choosing the
weighting matrix optimally, we also show that the optimally-weighted MD and
one-step estimators have the same asymptotic distribution. These CLTs are of
independent interest and contribute to the literature on the large dimensional
CLTs (see \cite{huber1973}, \cite{yohaimaronna1979}, \cite{portnoy1985},
\cite{mammen1989}, \cite{welsh1989}, \cite{baiwu1994},
\cite{saikkonenlutkepohl1996} and \cite{heshao2000}). Last, we give a
specification test which allows us to test whether a Kronecker product model
is correctly specified.

We discuss in Section \ref{sec model} what kind of data gives rise to a
Kronecker product model. However, a given covariance or correlation matrix
might not exactly correspond to a Kronecker product; in which case a Kronecker
product model is misspecified, so $\Delta\notin\{\Delta^{\ast}\}$. The
previous literature on Kronecker product models did not touch this, but we
shall demonstrate in this article that a Kronecker product model is a very
good approximating device to general covariance or correlation matrices, by
trading off variance with bias. We show that for a given Kronecker product
model there always exists a member in it that is closest to the covariance or
correlation matrix in some sense to be made precise shortly.



We provide some simulation evidence that the Kronecker product model works
very well when it is correctly specified. In the empirical study, we apply the
Kronecker product model to S\&P500 daily stock returns and compare it with
\cite{ledoitwolf2004}'s linear shrinkage estimator as well as with
\cite{ledoitwolf2017}'s direct nonlinear shrinkage estimator. We find that the
minimum variance portfolio implied by a Kronecker product model is almost as
good as that constructed from \cite{ledoitwolf2004}'s linear shrinkage
estimator. In future work we aim to improve the practical performance of our
method by combining it with other approaches such as factor models and by
improving the estimation methodology. 

\subsection{Literature Review}

The Kronecker product model has been previously considered in the psychometric
literature (see \cite{campbelloconnell1967}, \cite{swain1975},
\cite{cudeck1988}, \cite{verheeswansbeek1990} etc). In a
multitrait-multimethod (MTMM) context, "multi-mode" data give rise to a
Kronecker product model naturally (we will further discuss this in Section
\ref{sec model}). \cite{verheeswansbeek1990} outlined several estimation
methods of the model based on the least squares and maximum likelihood
principles, and provided large sample variances under assumptions of
Gaussianity and fixed $n$. There is a growing Bayesian and Frequentist
literature on multiway array or tensor datasets, where a Kronecker product
model is commonly employed. See for example \cite{akdemirgupta2011},
\cite{allen2012}, \cite{brownemaccallum2002}, \cite{cohenusevichcomon2016},
\cite{constantinoukokoszkareimherr2015}, \cite{dobra2014},
\cite{fosdickhoff2014}, \cite{gerardhoff2015}, \cite{hoff2011},
\cite{hoff2015}, \cite{hoff2016}, \cite{krijnen2004}, \cite{leivaroy2014},
\cite{lengtang2012}, \cite{lizhang2016}, \cite{manceuradutilleul2013},
\cite{ningliu2013}, \cite{ohlsonahmadavonrosen2013}, \cite{singullahmad2012},
\cite{volovskyhoff2014}, \cite{volovskyhoff2015}, and \cite{yinli2012}. In
this literature, they also work with fixed $n$.


In the spatial literature, there are a number of studies that consider a
Kronecker product structure for the correlation matrix of a random field, see
for example \cite{lohlam2000}.

This article is the first one studying Kronecker product models in the large
dimensional case. Our work is also among the first exploiting log sparsity;
the other is \cite{batteyfan2017}, although there are a few differences.
First, their log sparsity is an assumption from the onset, in a similar spirit
as \cite{bickellevina2008}, whereas our log sparsity is induced by a Kronecker
product model. Second, they work with covariance matrices while we shall focus
on correlation matrices. Even if we look at covariance matrices for the
purpose of comparison, the Kronecker product model imposes different sparsity
restrictions - as compared to those imposed by \cite{batteyfan2017} - on the
elements of the logarithm of the covariance matrix. Third and perhaps most
important, we look at different estimators.


\subsection{Roadmap}

The rest of the article is structured as follows. In Section \ref{sec model}
we lay out the Kronecker product model in detail. Section
\ref{sec MD estimation} introduces the MD estimator, gives its asymptotic
properties, and includes a specification test, while Section \ref{sec QMLE}
discusses the QMLE and one-step estimator, and provides the asymptotic
properties of the one-step estimator. Section \ref{sec modelselection}
examines the issue of model selection. Section \ref{sec simu} provides
numerical evidence for the model as well as an empirical application. Section
\ref{sec conclusion} concludes. Major proofs are to be found in Appendix; the
remaining proofs are put in Supplementary Material (SM in what follows).

\subsection{Notation}

Let $A$ be an $m\times n$ matrix. Let $\ve A$ denote the vector obtained by
stacking the columns of $A$ one underneath the other. The \textit{commutation
matrix} $K_{m,n}$ is an $mn\times mn$ \textit{orthogonal} matrix which
translates $\ve A$ to $\ve(A^{\intercal})$, i.e., $\ve(A^{\intercal}%
)=K_{m,n}\ve(A)$. If $A$ is a symmetric $n\times n$ matrix, its $n(n-1)/2$
supradiagonal elements are redundant in the sense that they can be deduced
from symmetry. If we eliminate these redundant elements from $\ve A$, we
obtain a new $n(n+1)/2\times1$ vector, denoted $\vech A$. They are related by
the full-column-rank, $n^{2}\times n(n+1)/2$ \textit{duplication matrix}
$D_{n}$: $\ve A=D_{n}\vech A$. Conversely, $\vech A=D_{n}^{+}\ve A$, where
$D_{n}^{+}$ is $n(n+1)/2\times n^{2}$ and the Moore-Penrose generalized
inverse of $D_{n}$. In particular, $D_{n}^{+}=(D_{n}^{\intercal}D_{n}%
)^{-1}D_{n}^{\intercal}$ because $D_{n}$ is full-column-rank. We use
$\mathrm{vecl}(A)$ to denote the vectorization operator of the lower
off-diagonal elements of $A$ (so this operator excludes the diagonal elements
unlike the related $\vech(\cdot)$ operator).

For $x\in\mathbb{R}^{n}$, let $\Vert x\Vert_{2}:=\sqrt{\sum_{i=1}^{n}x_{i}%
^{2}}$ and $\|x\|_{\infty}:=\max_{1\leq i\leq n}|x_{i}|$ denote the Euclidean
norm and the element-wise maximum norm, respectively.
Let $\text{maxeval}(\cdot)$ and $\text{mineval}(\cdot) $ denote the maximum
and minimum eigenvalues of some real symmetric matrix, respectively. For any
real $m\times n$ matrix $A=(a_{i,j})_{1\leq i\leq m, 1\leq j\leq n}$, let
$\Vert A\Vert_{F}:=[\text{tr}(A^{\intercal}A)]^{1/2}\equiv[\text{tr}%
(AA^{\intercal})]^{ 1/2}\equiv\Vert\ve A\Vert_{2}$ and $\Vert A\Vert_{\ell
_{2}}:=\max_{\Vert x\Vert_{2}=1}\Vert Ax\Vert_{2}\equiv\sqrt{\text{maxeval}%
(A^{\intercal}A)}$
denote the Frobenius norm and spectral norm ($\ell_{2}$ operator norm)
of $A$, respectively. Note that $\|\cdot\|_{\infty}$ can also be applied to
matrix $A$, i.e., $\|A\|_{\infty}=\max_{1\leq i\leq m,1\leq j\leq n}|a_{i,j}%
|$; however $\|\cdot\|_{\infty}$ is not a matrix norm so it does not have the
submultiplicative property of a matrix norm.

Consider two sequences of $n\times n$ real random matrices $X_{T}$ and $Y_{T}%
$. Notation $X_{T}=O_{p}(\|Y_{T}\|)$, where $\|\cdot\|$ is some matrix norm,
means that for every real $\varepsilon>0$, there exist $M_{\varepsilon}>0$,
$N_{\varepsilon}>0$ and $T_{\varepsilon}>0$ such that for all
$n>N_{\varepsilon}$ and $T>T_{\varepsilon}$, $\mathbb{P}(\|X_{T}%
\|/\|Y_{T}\|>M_{\varepsilon})<\varepsilon$. Notation $X_{T}=o_{p}(\|Y_{T}\|)$,
where $\|\cdot\|$ is some matrix norm, means that $\|X_{T}\|/\|Y_{T}%
\|\xrightarrow{p}0$ as $n,T\to\infty$ simultaneously. Landau notation in this
article, unless otherwise stated, should be interpreted in the sense that
$n,T\to\infty$ simultaneously.

Let $a \vee b$ and $a \wedge b$ denote $\max(a,b)$ and $\min(a,b)$,
respectively.
For $x\in\mathbb{R}$, let $\lfloor x\rfloor$ denote the greatest integer
\textit{strictly less} than $x$ and $\lceil x \rceil$ denote the smallest
integer greater than or equal to $x $. Notation $\sigma(\cdot)$ defines sigma algebra.

For matrix calculus, what we adopt is called the \textit{numerator layout} or
\textit{Jacobian formulation}; that is, the derivative of a scalar with
respect to a column vector is a row vector.

\section{The Kronecker Product Model}

\label{sec model}


In this section we provide more details on the model. Consider an
$n$-dimensional weakly stationary time series vector $y_{t},$ where
$\mu:=\mathbb{E}y_{t}$ and covariance matrix $\Sigma:=\mathbb{E}[(y_{t}%
-\mu)(y_{t}-\mu)^{\intercal}]$. Let $D$ be the diagonal matrix containing the
diagonal entries of $\Sigma$.\footnote{Matrix $D$ should not be confused with
the duplication matrix $D_{n}$ defined in Notation.} The correlation matrix of
$y_{t},$ denoted $\Theta,$ is $\Theta:=D^{-1/2}\Sigma D^{-1/2}.$ A Kronecker
product model for the covariance or correlation matrix is given by
(\ref{eqn model}). The factorization $n=n_{1}\times\cdots\times n_{v}$ could
be the prime factorization, which exists for any integer $n,$ or it could be
an aggregation of that. For example, if $n=2^{8},$ one factorization is
$2\times2\times\cdots\times2,$ called the minimal factorization, at the other
extreme $2^{8}\times1$ is the maximal factorization (we do not consider the maximal factorization in this article). One also has
$4\times4\times4\times4$ and \ $2\times16\times2\times4$ etc. Highly composite
numbers such as $2^{8}$ offer many possible factorizations, but if $n$ is not
composite or not composite enough, one can add a vector of pseudo variables to
the system until the final dimension is composite enough.\footnote{It is
recommended to add a vector of independent variables $z_{t}\sim N\left(
0,I_{k}\right)  $ such that $(y_{t}^{\intercal},z_{t}^{\intercal})^{\intercal
}$ is an $n^{\prime}\times1$ random vector with $n^{\prime}\times n^{\prime}$
correlation matrix
\[
\Theta=\left[
\begin{array}
[c]{cc}%
\Theta_{y} & 0\\
0 & I_{k}%
\end{array}
\right]  .
\]
}

Let $\Delta$ denote $\Sigma$ or $\Theta$ according to the modelling purpose.
If $\Delta\in\{\Delta^{\ast}\}$, we say that the Kronecker product model
$\{\Delta^{\ast}\}$ is \textit{correctly specified}. Otherwise the Kronecker
product model $\{\Delta^{\ast}\}$ is \textit{misspecified}. We first make
clearer when a Kronecker product model is correctly specified (see
\cite{verheeswansbeek1990} and \cite{cudeck1988} for more discussion). A
Kronecker product arises when data have some \textit{multiplicative array}
structure. For example, suppose that $u_{j,k}$ are error terms in a panel
regression model with $j=1,\ldots,{J}$ and $k=1,\ldots,{K}$. The interactive
effects model of \cite{bai2009} is that $u_{j,k}=\gamma_{j}f_{k},$ which
implies that $u=\gamma\otimes f,$ where $u$ is the $JK\times1$ vector
containing all the elements of $u_{j,k},$ $\gamma=(\gamma_{1},\ldots
,\gamma_{{J}})^{\intercal}$, and $f=(f_{1},\ldots,f_{{K}})^{\intercal}.$
Suppose that $\gamma,f$ are random, where $\gamma$ is independent of $f$, and
both vectors have mean zero. Then,%
\[
\mathbb{E}[uu^{\intercal}]=\mathbb{E}[\gamma\gamma^{\intercal}]\otimes
\mathbb{E}[ff^{\intercal}].
\]
In this case the covariance matrix of $u$ is a Kronecker product of two
sub-matrices. If one dimension were time and the other were firm, then this
implies that the covariance matrix of $u$ is the product of a covariance matrix
representing cross-sectional dependence and a covariance matrix representing
the time series dependence.

We can think of our more general model (\ref{eqn model}) arising from
\textit{multi-index data with $v$ multiplicative factors}. Multiway arrays are
one such example as each observation has $v$ different indices (see
\cite{hoff2015}). Suppose that $u_{i_{1},i_{2},\ldots,i_{v}}=\varepsilon
_{1,i_{1}}\varepsilon_{2,i_{2}}\cdots\varepsilon_{v,i_{v}},$ $i_{j}%
=1,\ldots,n_{j}$ for $j=1,\ldots,v,$ or in vector form
\[
u=(u_{1,1,\ldots,1},\ldots,u_{n_{1},n_{2},\ldots,n_{v}})^{\intercal
}=\varepsilon_{1}\otimes\varepsilon_{2}\otimes\cdots\otimes\varepsilon
_{v},\label{in}%
\]
where the factor $\varepsilon_{j}=(\varepsilon_{j,1},\ldots,\varepsilon
_{j,n_{j}})^{\intercal}$ is a mean zero random vector of length $n_{j}$ with
covariance matrix $\Sigma_{j}$ for $j=1,\ldots,v$, and in addition the factors
$\varepsilon_{1},\ldots,\varepsilon_{v}$ are mutually independent. Then
\[
\Sigma=\mathbb{E}[uu^{\intercal}]=\Sigma_{1}\otimes\Sigma_{2}\otimes
\cdots\otimes\Sigma_{v}.
\]
We hence see that the covariance matrix is a Kronecker product of $v$
sub-matrices. Such multiplicative effects may be a valid description of a data
generating process.\footnote{For example, in portfolio choice, one might
consider, say, 250 equity portfolios constructed by intersections of 5 size
groups (quintiles), 5 book-to-market equity ratio groups (quintiles) and 10
industry groups, in the spirit of \cite{famafrench1993}. For example, one
equity portfolio might consist of stocks which are in the smallest size
quintile, largest book-to-market equity ratio quintile, and construction
industry simultaneously. Then a Kronecker product model is applicable either
directly to the covariance matrix of returns of these 250 equity portfolios or
to the covariance matrix of the residuals after purging other common risk
factors such as momentum.
}

In earlier versions of this article we emphasized the Kronecker product model
for the covariance matrix. We now focus primarily on the correlation matrix
for the reasons mentioned in the introduction and leave the diagonal variance
matrix $D$ unrestricted. For the present discussion we assume that $D$ (as
well as $\mu$) is known. A Kronecker product model for $\Theta$ is given by
(\ref{eqn model}) with $\Delta^{\ast}$ and $\{\Delta_{j}^{\ast}\}_{j=1}^{v}$
replaced by $\Theta^{\ast}$ and $\{\Theta_{j}^{\ast}\}_{j=1}^{v}$,
respectively. Since $\Theta$ is a correlation matrix, this implies that the
diagonal entries of $\Theta_{j}^{\ast}$ must be the same, although this
diagonal entry could differ as $j$ varies (so long as the diagonal entries of
the implied $\Theta^{\ast}$ are one). Without loss of generality, we may
impose a normalization constraint that all the diagonal entries of sub-matrices $\{\Theta_{j}^{\ast}\}_{j=1}^{v}$ are equal to one.

The Kronecker product model substantially reduces the number of parameters to
estimate. In an unrestricted correlation matrix, there are $n(n-1)/2$
parameters, while a Kronecker product model has only $\sum_{j=1}^{v}%
n_{j}(n_{j}-1)/2$ parameters. As an extreme illustration, when $n=256$, the
unrestricted correlation matrix has 32,640 parameters while a Kronecker
product model of factorization $256=2^{8}$ has only 8 parameters! Since we do
not restrict the diagonal matrix we have an additional $n$ variance
parameters,\footnote{These parameters can be estimated in a first step by
standard methods.} so overall the correlation matrix version of the model has
more parameters and more flexibility than the covariance matrix version of the model. The Kronecker product model induces sparsity. Specifically,
although $\Theta^{\ast}$ is not sparse, the matrix $\log\Theta^{\ast}$ is
sparse, where $\log$ denotes the (principal) matrix logarithm defined through
the eigendecomposition of a real symmetric, positive definite matrix (see
\cite{higham2008} p20 for a definition). This is due to a property of
Kronecker products (see Lemma \ref{prop log kronecker} in SM \ref{sec B1} for
derivation), that
\begin{equation}
\log\Theta^{\ast}=\log\Theta_{1}^{\ast}\otimes I_{n_{2}}\otimes\cdots\otimes
I_{n_{v}}+I_{n_{1}}\otimes\log\Theta_{2}^{\ast}\otimes I_{n_{3}}\otimes
\cdots\otimes I_{n_{v}}+\cdots+I_{n_{1}}\otimes I_{n_{2}}\otimes\cdots
\otimes\log\Theta_{v}^{\ast}, \label{loglin}%
\end{equation}
whence we see that $\log\Theta^{\ast}$ has many zero elements, generated by
identity sub-matrices.\footnote{A final property of the Kronecker product
model is that it is invariant under the Lie group of transformations
$\mathcal{G}$ generated by $A_{1}\otimes A_{2}\otimes\cdots\otimes A_{v},$
where $A_{j}$ are $n_{j}\times n_{j}$ nonsingular matrices (see
\cite{browneshapiro1991}). This structure can be used to characterise the
tangent space $\mathcal{T}$ of $\mathcal{G}$ and to define a relevant
equivariance concept for restricting the class of estimators for optimality
considerations.} That is, we can write $\mathrm{vech}(\log\Theta^{\ast
})=E\theta^{\ast}$ for some matrix $E$ of zeros and ones and vector
$\theta^{\ast}$ containing the unrestricted elements of $\log\Theta_{1}^{\ast
},\ldots,\log\Theta_{v}^{\ast}.$


We next discuss some further identification/parameterization issues. First of all, sub-matrices $\{\Theta_{j}^{*}\}_{j=1}^{v}$ are uniquely identified by the
following argument based on the architecture of $\Theta^{*}$.  Suppose that $\Theta^{*}=\tilde{\Theta}^{*}_{1}\otimes
\cdots\otimes\tilde{\Theta}^{*}_{v}$ for other sub-matrices $\{\tilde{\Theta
}_{j}^{*}\}_{j=1}^{v}$, with diagonal elements being one, whose dimensions agree with those of $\{\Theta_{j}%
^{*}\}_{j=1}^{v}$. Let $\rho_{j,k\ell}^{*}$ and $\tilde{\rho}^{*}_{j,k\ell}$
denote a typical off-diagonal element of $\Theta_{j}^{*}$ and $\tilde{\Theta
}_{j}^{*}$, respectively ($k,\ell=1,\ldots,n_{j}$, $k\neq\ell$). Note that
$\rho^{*}_{j,k\ell}$ appears, on its own, in some elements of $\Theta^{*}$, so does $\tilde{\rho}^{*}_{j,k\ell}$ in the same positions. We must have
$\rho_{j,k\ell}^{*}=\tilde{\rho}_{j;k\ell}^{*}$ for all $k,\ell=1,\ldots
,n_{j}$, $k\neq\ell$ and $j=1,\ldots,v$. Therefore, sub-matrices $\{\Theta
_{j}^{*}\}_{j=1}^{v}$ are identified from $\Theta^{*}$ once $\{n_{j}%
\}_{j=1}^{v}$ are specified, or equivalently $\{\rho^{*}_{j,k\ell}:
k,\ell=1,\ldots, n_{j}, k\neq\ell\}_{j=1}^{v}$ are identified from $\Theta
^{*}$ once $\{n_{j}\}_{j=1}^{v}$ are specified. We call $\{\rho^{*}_{j,k\ell}:
k,\ell=1,\ldots, n_{j}, k\neq\ell\}_{j=1}^{v}$ the \textit{original
parameters} of some member $\Theta^{*}$ in the Kronecker product model. If $\{\Theta
_{j}^{*}\}_{j=1}^{v}$ are positive definite correlation matrices, then so is
$\Theta^{*}.$

Second, the matrix logarithm of a correlation matrix has a complicated
structure, with its diagonal elements taking any non-positive values and its off-diagonal elements taking any values (\cite{archakovhansen2018} Lemma 2). As an illustration, suppose that%
\[
\Theta_{1}^{\ast}=\left(
\begin{array}
[c]{ccc}%
1 & 0.8 & 0.5\\
0.8 & 1 & 0.2\\
0.5 & 0.2 & 1
\end{array}
\right)  ,
\]
then%
\[
\log\Theta_{1}^{\ast}=\left(
\begin{array}
[c]{ccc}%
-0.75 & 1.18 & 0.64\\
1.18 & -0.55 & -0.07\\
0.64 & -0.07 & -0.17
\end{array}
\right)  .
\]
There are $\sum_{j=1}^{v}n_{j}(n_{j}+1)/2$ parameters in $\{\log\Theta
_{j}^{\ast}\}_{j=1}^{v}$; we call these \textit{log parameters} of some member
$\Theta^{\ast}$ in the Kronecker product model. On the other hand,
$\Theta^{\ast}$ has only $\sum_{j=1}^{v}n_{j}(n_{j}-1)/2$ original parameters.
For each $\Theta_{j}^{*}$, its $n_{j}(n_{j}-1)/2$ original parameters
completely pin down its $n_{j}(n_{j}+1)/2$ log parameters. In other words,
there exists a function $f:\mathbb{R}^{n_{j}(n_{j}-1)/2}\rightarrow
\mathbb{R}^{n_{j}(n_{j}+1)/2}$ which maps the original parameters to the log
parameters. However, when $n_{j}>4$, $f$ does \textit{not} have a closed form
because when $n_{j}>4$ the continuous functions which map elements of a matrix
to its eigenvalues have no closed form. When $n_{j}=2$, we can solve $f$ by
hand (see Example \ref{ex closed form when nj=2}).

\bigskip

\begin{example}
\label{ex closed form when nj=2} Suppose
\[
\Theta^{*}_{1}=\left(
\begin{array}
[c]{cc}%
1 & \rho_{1}^{*}\\
\rho_{1}^{*} & 1
\end{array}
\right)  .
\]
The eigenvalues of $\Theta_{1}^{*}$ are $1+\rho_{1}^{*}$ and $1-\rho_{1}^{*}$,
respectively. The corresponding eigenvectors are $(1,1)^{\intercal}/\sqrt{2}$
and $(1,-1)^{\intercal}/\sqrt{2}$, respectively. Therefore
\begin{align*}
\log\Theta_{1}^{*}  &  =\left(
\begin{array}
[c]{cc}%
1 & 1\\
1 & -1
\end{array}
\right)  \left(
\begin{array}
[c]{cc}%
\log(1+\rho_{1}^{*}) & 0\\
0 & \log(1-\rho_{1}^{*})
\end{array}
\right)  \left(
\begin{array}
[c]{cc}%
1 & 1\\
1 & -1
\end{array}
\right)  \frac{1}{2}\\
&  =\left(
\begin{array}
[c]{cc}%
\frac{1}{2}\log(1-[\rho_{1}^{*}]^{2}) & \frac{1}{2}\log
\del[2]{\frac{1+\rho_1^*}{1-\rho_1^*}}\\
\frac{1}{ 2}\log\del[2]{\frac{1+\rho_1^*}{1-\rho_1^*}} & \frac{1}{2}%
\log(1-[\rho_{1}^{*}]^{2})
\end{array}
\right)  .
\end{align*}
Thus
\[
f(\rho)=\del[3]{\frac{1}{2}\log(1-\rho^{2}),\frac{1}{2}\log
\del[2]{\frac{1+\rho}{1-\rho}}, \frac{1}{2}\log(1-\rho^{2}) }^{\intercal}.
\]

\end{example}

\bigskip

Third, there are several ways to achieve identification of $\{\log\Theta
_{j}^{\ast}\}_{j=1}^{v}$ given $\log\Theta^{\ast}$ (i.e., identification of
log parameters of $\Theta^{*}$). We outline two methods. The \textit{fill and
shrink} method estimates the log parameters without imposing the restrictions implied by that $\{\Theta_{j}^{\ast}\}_{j=1}^{v}$ being correlation matrices, and then imposes those restrictions afterwards. In this case at the estimation stage, we must impose $v-1$ identification restrictions on the log parameters because in (\ref{loglin}) the diagonal elements of $\log\Theta^{*}$
are sums of diagonal elements from $\{\log\Theta_{j}^{\ast}\}_{j=1}^{v}$ (see
Example \ref{ex2x2x2} in SM \ref{sec B1}). There are several ways to impose
these $v-1$ identification restrictions. For example, one can set
$\tr(\log\Theta_{j}^{\ast})$ to be some fixed value for $j=1,\ldots,v-1$, or
one can set $v-1$ diagonal elements of $\{\log\Theta_{j}^{*}\}_{j=1}^{v}$ to
be zero (see Examples \ref{ex2x2} and \ref{ex2x2x2} in SM \ref{sec B1} for
illustrations). Then in Theorem \ref{prop kronecker formula} in Appendix
\ref{sec A.1} we show that there exists an $n(n+1)/2\times s$ full column
rank, deterministic matrix $E$ such that
\[
\vech(\log\Theta^{\ast})=E\theta^{\ast},
\]
where $\theta^{\ast}\in\mathbb{R}^{s}$ are the \textit{unrestricted} log
parameters of $\Theta^{\ast}$, where $s:=\sum_{j=1}^{v}n_{j}(n_{j}%
+1)/2-(v-1)=O(\log n)$. So far we have not imposed those restrictions that
$\{\Theta_{j}^{\ast}\}_{j=1}^{v}$ are correlation matrices. Nevertheless,
$D^{1/2}\exp(\log\Theta^{\ast})D^{1/2}$ will always be a covariance matrix, and one
could re-compute the correlation matrix from $D^{1/2}\exp(\log\Theta^{\ast})D^{1/2}$ by re-normalization.
Alternatively, one could use minimum distance estimation to shrink $\exp
(\log\Theta_{j}^{\ast})$ to a correlation matrix for $j=1,\ldots,v$ (see SM
\ref{sec two stage MD} for a discussion).

On the other hand, the \textit{shrink and fill} method identifies a
\textit{subset} of unrestricted log parameters and then fills in the remainder
afterwards. A recent paper of \cite{archakovhansen2018} proposed a neat way to achieve this. Let $\tilde{\theta}_{j}^{\ast}:=\mathrm{vecl}(\log\Theta_{j}^{\ast
})$ and we can identify $\{\tilde{\theta}_{j}^{*}\}_{j=1}^{v}$ from $\mathrm{vecl}(\log\Theta^{\ast})$.
Then we can use $\tilde{\theta}_{j}^{\ast}$ to uniquely determine the diagonal elements of $\log\Theta_{j}^{\ast}$ by some function $\phi:\mathbb{R}^{n_{j}(n_{j}%
-1)/2}\rightarrow\mathbb{R}^{n_{j}},$ which can be obtained numerically (in
the case $n_{j}=2$ there exists a closed form, see Example
\ref{ex closed form when nj=2}). \cite{archakovhansen2018} gave a concrete
algorithm to do this and established its validity.



We shall use the fill and shrink method in what follows; in particular we set
the first diagonal entry of $\log\Theta_{j}^{*}$ to zero for $j=1,\ldots,v-1$.
To summarise, in order to estimate a correlation matrix $\Theta$ using a
Kronecker product model $\Theta^{*}$, there are two approaches. First, one can
estimate the original parameters using the principle of maximum likelihood
(see Section \ref{sec QMLE original parameter}) or nonlinear minimum distance.
Then form an estimate of $\Theta^{*}$. Second, one can estimate the
unrestricted log parameters $\theta^{\ast}$ using the principle of minimum
distance (see Section \ref{sec MD estimation}) or maximum likelihood (see
Section \ref{sec QMLE original parameter}). Form an estimate of $\exp
(\log\Theta^{*})$ and then recover the estimated correlation matrix using
either re-normalization or shrinkage. To study the theoretical properties of a
Kronecker product model in large dimension, the second approach is more
appealing as log parameters are additive from (\ref{loglin}) while original
parameters are multiplicative in nature; additive objects are easier to
analyse theoretically than multiplicative objects.


\section{Linear Minimum Distance Estimator}

\label{sec MD estimation}

In this section, we define a class of estimators of the (unrestricted) log parameters
$\theta^{\ast}$ of some member in the Kronecker product model (\ref{eqn model}%
), which are linear in the log sample correlation matrix, and give its
asymptotic properties.

\subsection{Estimation}

We observe a sample $\{y_{t}\}_{t=1}^{T}.$ Define the sample covariance matrix
and sample correlation matrix
\[
\hat{\Sigma}_{T}:=\frac{1}{T}\sum_{t=1}^{T}(y_{t}-\bar{y})(y_{t}-\bar
{y})^{\intercal},\qquad\hat{\Theta}_{T}:=\hat{D}_{T}^{-1/2}\hat{\Sigma}%
_{T}\hat{D}_{T}^{-1/2},
\]
where $\bar{y}:=(1/T)\sum_{t=1}^{T}y_{t}$ and $\hat{D}_{T}$ is a diagonal
matrix whose diagonal elements are diagonal elements of $\hat{\Sigma}_{T}$. We
show in Appendix \ref{sec A2} that in the Kronecker product model
$\{\Theta^{\ast}\}$ there exists a unique member, denoted by $\Theta^{0}$,
which is closest to the correlation matrix $\Theta$ in the following sense:
\begin{equation}
\theta^{0}=\theta^{0}(W):=\arg\min_{\theta^{\ast}\in\mathbb{R}^{s}}%
[\vech(\log\Theta)-E\theta^{\ast}]^{\intercal}W[\vech(\log\Theta
)-E\theta^{\ast}], \label{ali popu MD objective fun}%
\end{equation}
where $W$ is a $n(n+1)/2\times n(n+1)/2$ symmetric, positive definite
weighting matrix which is free to choose. Clearly, $\theta^{0}$ has the closed
form solution $\theta^{0}=(E^{\intercal}WE)^{-1}E^{\intercal}W\vech(\log
\Theta).$ The population objective function in
(\ref{ali popu MD objective fun}) allows us to define a minimum distance (MD)
estimator:
\begin{equation}
\hat{\theta}_{T}=\hat{\theta}_{T}(W):=\arg\min_{b\in\mathbb{R}^{s}}%
[\vech(\log\hat{\Theta}_{T})-Eb]^{\intercal}W[\vech(\log\hat{\Theta}_{T})-Eb],
\label{eqn MD objective function}%
\end{equation}
whence we can solve
\begin{equation}
\hat{\theta}_{T}=(E^{\intercal}WE)^{-1}E^{\intercal}W\vech(\log\hat{\Theta
}_{T}). \label{eqn thetaTW closed form solution}%
\end{equation}
Note that $\theta^{0}$ is the quantity which one should expect $\hat{\theta
}_{T}$ to converge to in some probabilistic sense regardless of whether the
Kronecker product model $\{\Theta^{\ast}\}$ is correctly specified or not.
When $\{\Theta^{\ast}\}$ is correctly specified, we have $\theta
^{0}=(E^{\intercal}WE)^{-1}E^{\intercal}W\vech(\log\Theta)=(E^{\intercal
}WE)^{-1}E^{\intercal}WE\theta=\theta.$ In this case, $\hat{\theta}_{T}$ is
indeed estimating the elements of the correlation matrix $\Theta$.

In practice the MD estimator is easy to compute. The matrix
$E^{\intercal}WE$ is of dimensions $s\times s$ and is highly structured (at
least in the diagonal $W$ case$).$ One only needs a user-defined function in
some software to generate the matrix $E$ before one can use formula
(\ref{eqn thetaTW closed form solution}) to compute the MD
estimator.\footnote{We have written a user-defined function in Matlab which
can return $E$ within a few seconds for fairly large $n$, say, $n=625$. It is
available upon request.}

\subsection{Rate of Convergence}

We now introduce some assumptions for our theoretical analysis. These
conditions are sufficient but far from necessary.

\begin{assumption}
\label{assu subgaussian vector}

\item
\begin{enumerate}
[(i)]

\item For all $t$, for every $a\in\mathbb{R}^{n}$ with $\|a\|_{2}=1$, there
exist absolute constants $K_{1}>1, K_{2}>0, r_{1}>0$ such
that\footnote{"Absolute constants" mean constants that are independent of both
$n$ and $T$.}
\[
\mathbb{E}\sbr[2]{\exp\del [1]{K_2 |a^{\intercal}y_t|^{r_1}}}\leq K_{1}.
\]


\item The time series $\{y_{t}\}_{t=1}^{T}$ are normally distributed.
\end{enumerate}
\end{assumption}

\begin{assumption}
\label{assu mixing} There exist absolute constants $K_{3}>0$ and $r_{2}>0$
such that for all $h\in\mathbb{N}$
\[
\alpha(h)\leq\exp\del [1]{-K_3h^{r_2}},
\]
where $\alpha(h)$ is the $\alpha$-mixing (i.e., strong mixing) coefficients of
$y_{t}$ which are defined by $\alpha(0)=1/2$ and for $h\in\mathbb{N}$
\[
\alpha(h):=2\sup_{t} \sup_{\substack{A\in\sigma(\cdots, y_{t-1},y_{t})
\\B\in\sigma(y_{t+h},y_{t+h+1},\cdots)}%
}\envert[1]{\mathbb{P}(A\cap B)-\mathbb{P}(A)\mathbb{P}(B)},
\]
where $\sigma(\cdot)$ defines sigma algebra.
\end{assumption}

\begin{assumption}
\label{assu n indexed by T}

\item
\begin{enumerate}
[(i)]

\item Suppose $n,T\to\infty$ simultaneously, and $n/T\to0$.

\item Suppose $n,T\to\infty$ simultaneously, and
%

\[
\frac{n^{4}\varpi^{4}\kappa^{6}(W)(\log^{5} n)\log^{2}(1+T)}{T}=o(1)
\]
where $\kappa(W)$ is the \textit{condition number} of $W$ for matrix inversion
with respect to the spectral norm, i.e., $\kappa(W):= \|W^{-1}\|_{\ell_{2}%
}\|W\|_{\ell_{2}}$ and $\varpi$ is defined in Assumption
\ref{assu about D and Dhat}(ii).

\item Suppose $n,T\to\infty$ simultaneously, and

\begin{enumerate}
[(a)]

\item
\[
\frac{n^{4} \varpi^{4}\kappa(W) ( \log^{5} n) \log^{2}(1+T)}{T}=o(1),
\]

\item
\[
\frac{\varpi^{2} \log n}{n}=o(1),
\]

\end{enumerate}

where $\kappa(W)$ is the \textit{condition number} of $W$ for matrix inversion
with respect to the spectral norm, i.e., $\kappa(W):= \|W^{-1}\|_{\ell_{2}%
}\|W\|_{\ell_{2}}$, and $\varpi$ is defined in Assumption
\ref{assu about D and Dhat}(ii).
\end{enumerate}
\end{assumption}

\begin{assumption}
\label{assu about D and Dhat}

\item
\begin{enumerate}
[(i)]

\item The minimum eigenvalue of $\Sigma$ is bounded away from zero by an
absolute constant.

\item Suppose
\[
\text{mineval}\del [3]{\frac{1}{n}E^{\intercal}E}\geq\frac{1}{\varpi}>0.
\]
(At most $\varpi=o( n)$.)
\end{enumerate}
\end{assumption}

\bigskip

Assumption \ref{assu subgaussian vector}(i) is standard in high-dimensional
theoretical work (e.g., \cite{fanliaomincheva2011}, \cite{changqiuyaozou2018}
etc). In essence it assumes that a random vector has some exponential-type
tail probability (c.f. Lemma \ref{lemmaexponentialtail} in Appendix
\ref{secArateofconvergence}), which allows us to invoke some concentration
inequality such as a version of the Bernstein's inequality (e.g., Theorem
\ref{thmbernsteininequality} in Appendix \ref{sec oldappendixB}). The
parameter $r_{1}$ restricts the size of the tail of $y_{t}$ - the smaller
$r_{1}$, the heavier the tail. When $r_{1}=2$, $y_{t}$ is said to be
\textit{subgaussian}, when $r_{1}=1$, $y_{t}$ is said to be
\textit{subexponential}, and when $0<r_{1}<1$, $y_{t}$ is said to be
\textit{semiexponential}.

Needless to say, Assumption \ref{assu subgaussian vector}(i) is stronger than
a finite polynomial moment assumption as it assumes the existence of some
exponential moment. In a setting of independent observations,
\cite{vershynin2012} replaced Assumption \ref{assu subgaussian vector}(i) with
a finite polynomial moment condition and established a rate of convergence for
covariance matrices, which is slightly worse than what we have in Theorem
\ref{thm main rate of convergence}(i) for correlation matrices. For dependent
data, relaxation of the subgaussian assumption is currently an active research
area in probability theory and statistics. One of the recent work is
\cite{wuwu2016} in which they relaxed subgaussianity to a finite polynomial
moment condition in high-dimensional linear models with help of Nagaev-type
inequalities. Thus Assumption \ref{assu subgaussian vector}(i) is likely to be
relaxed when new probabilistic tools become available.


Assumption \ref{assu subgaussian vector}(ii), which will only be used in
Section \ref{sec QMLE} for one-step estimation, implies Assumption
\ref{assu subgaussian vector}(i) with $0<r_{1}\leq2$. Assumption
\ref{assu subgaussian vector}(ii) is not needed for the minimum distance
estimation (Theorem \ref{thm asymptotic normality} or
\ref{thm asymptotic normality MD when D is unknown}) though.

Assumption \ref{assu mixing} assumes that $\{y_{t}\}_{t=1}^{T}$ is alpha
mixing (i.e., strong mixing) because $\alpha(h)\to0$ as $h\to\infty$. In fact,
we require it to decrease at an exponential rate. The bigger $r_{2}$ gets, the
faster the decay rate and the less dependence $\{y_{t}\}_{t=1}^{T}$ exhibits.
This assumption covers a wide range of time series. It is well known that both
classical ARMA and GARCH processes are strong mixing with mixing coefficients
which decrease to zero at an exponential rate (see Section 2.6.1 of
\cite{fanyao2003} and the references therein).

Assumption \ref{assu n indexed by T}(i) is for the derivation of a rate of
convergence of $\hat{\Theta}_{T}-\Theta$ in terms of spectral norm. To
establish the \textit{same} rate of convergence of $\hat{\Sigma}_{T}-\Sigma$
in terms of spectral norm, one only needs $n/T\to c\in[0,1]$. However for
correlation matrices, we need $n/T\to0$. This is because a correlation matrix
involves inverses of standard deviations (see Lemma
\ref{lemma saikkonen lemma} in Appendix \ref{sec oldappendixB}).

Assumptions \ref{assu n indexed by T}(ii) and (iii) are \textit{sufficient}
conditions for the asymptotic normality of the minimum distance estimators
(Theorems \ref{thm asymptotic normality} and
\ref{thm asymptotic normality MD when D is unknown}) and one-step estimator
(Theorem \ref{thm one step estimator asymptotic normality}), respectively.
Assumption \ref{assu n indexed by T}(ii) or (iii) necessarily requires
$n^{4}/T\to0$. At first glance, it looks restrictive, but we would like to
emphasize that this is only a sufficient condition. We will have more to say
on this assumption in the discussions following Theorem
\ref{thm asymptotic normality}.

Assumption \ref{assu about D and Dhat}(i) is also standard. This ensures that
$\Theta$ is positive definite with the minimum eigenvalue bounded away from 0
by an absolute positive constant (see Lemma \ref{prop mini eigenvalue}(i) in
Appendix \ref{sec A4}) and its logarithm is well-defined. Assumption
\ref{assu about D and Dhat}(ii) postulates a lower bound for the minimum
eigenvalue of $E^{\intercal}E/n$; that is
\[
\frac{1}{\sqrt{\text{mineval}\del[1]{\frac{1}{n}E^{\intercal}E}}}%
=O(\sqrt{\varpi}).
\]
We divide $E^{\intercal}E$ by $n$ because all the non-zero elements of
$E^{\intercal}E$ are a multiple of $n$ (see Lemma \ref{prop decode E} in
Appendix \ref{sec A.1}). In words, Assumption \ref{assu about D and Dhat}(ii)
says that the minimum eigenvalue of $E^{\intercal}E/n$ is allowed to slowly
drift to zero.

\bigskip

The following theorem establishes an upper bound on the rate of convergence
for the minimum distance estimator $\hat{\theta}_{T}$. To arrive at this, we
restrict $r_{1}$ and $r_{2}$ such that $1/r_{1}+1/r_{2}>1$. However, this is
not a necessary condition.

\begin{thm}
\label{thm main rate of convergence}

\item
\begin{enumerate}
[(i)]

\item Suppose Assumptions \ref{assu subgaussian vector}(i), \ref{assu mixing},
\ref{assu n indexed by T}(i) and \ref{assu about D and Dhat}(i) hold with
$1/r_{1}+1/r_{2}>1$. Then
\[
\| \hat{\Theta}_{T}- \Theta\|_{\ell_{2}}=O_{p}\del [3]{\sqrt{\frac{n}{T}}},
\]
where $\|\cdot\|_{\ell_{2}}$ is the spectral norm.

\item Suppose that $\| \hat{\Theta}_{T}- \Theta\|_{\ell_{2}}< A$ with
probability approaching 1 for some absolute constant $A>1$, then we have
\[
\|\log\hat{\Theta}_{T}-\log\Theta\|_{\ell_{2}}=O_{p}(\| \hat{\Theta}_{T}-
\Theta\|_{\ell_{2}}).
\]

\item Suppose Assumptions \ref{assu subgaussian vector}(i), \ref{assu mixing},
\ref{assu n indexed by T}(i) and \ref{assu about D and Dhat} hold with
$1/r_{1}+1/r_{2}>1$. Then
\[
\|\hat{\theta}_{T}-\theta^{0}\|_{2}= O_{p}%
\del [3]{\sqrt{\frac{n\varpi \kappa(W)}{T}}},
\]
where $\|\cdot\|_{2}$ is the Euclidean norm, $\kappa(W)$ is the condition
number of $W$ for matrix inversion with respect to the spectral norm, i.e.,
$\kappa(W):=\|W^{-1}\|_{\ell_{2}}\|W\|_{\ell_{2}}$, and $\varpi$ is defined in
Assumption \ref{assu about D and Dhat}(ii).
\end{enumerate}
\end{thm}

\begin{proof}
See Appendix \ref{secArateofconvergence}.
\end{proof}

\bigskip

Theorem \ref{thm main rate of convergence}(i) provides a rate of convergence
of the spectral norm of $\hat{\Theta}_{T}- \Theta$, which is a stepping stone
for the rest of theoretical results. This rate is the same as that of
$\|\hat{\Sigma}_{T}-\Sigma\|_{\ell_{2}}$. The rate $\sqrt{n/T}$ is optimal in
the sense that it cannot be improved without a further structural assumption
on $\Theta$ or $\Sigma$.

Theorem \ref{thm main rate of convergence}(ii) is of independent interest as
it relates $\|\log\hat{\Theta}_{T}-\log\Theta\|_{\ell_{2}}$ to $\| \hat
{\Theta}_{T}- \Theta\|_{\ell_{2}}$. It is due to \cite{Gil2012}.

Theorem \ref{thm main rate of convergence}(iii) gives a rate of convergence of
the minimum distance estimator $\hat{\theta}_{T}$. Note that $\theta^{0}$ are
log parameters of the member in the Kronecker product model, which is closest
to $\Theta$ in the sense discussed earlier. For sample correlation matrix
$\hat{\Theta}_{T}$, the rate of convergence of $\Vert\ve(\hat{\Theta}%
_{T}-\Theta)\Vert_{2}$ is $\sqrt{n^{2}/T}$ (square root of a sum of $n^{2}$
terms each of which has a convergence rate $1/T$). Thus the minimum distance
estimator $\hat{\theta}_{T}$ of the Kronecker product model converges faster
provided $\varpi\kappa(W)$ is not too large, in line with the principle of
dimension reduction. However, given that the dimension of $\theta^{0}$ is
$s=O(\log n)$, one would conjecture that the optimal rate of convergence for
$\hat{\theta}_{T}$ should be $\sqrt{\log n/T}$. In this sense, Theorem
\ref{thm main rate of convergence}(iii) does not demonstrate the full
advantages of a Kronecker product model. Because of the severe non-linearity
introduced by the matrix logarithm it is a challenging problem to prove a
faster rate of convergence for $\Vert\hat{\theta}_{T}-\theta^{0}\Vert_{2}$.

\subsection{Asymptotic Normality}

We define $y_{t}$'s natural filtration $\mathcal{F}_{t}:=\sigma(y_{t},
y_{t-1},\ldots, y_{1})$ and $\mathcal{F}_{0}=\{\emptyset,\emptyset^c\}$.

\begin{assumption}
\label{assu mds}

\item
\begin{enumerate}
[(i)]

\item Suppose that $\{y_{t}-\mu,\mathcal{F}_{t}\}$ is a martingale difference
sequence; that is $\mathbb{E}[y_{t}-\mu|\mathcal{F}_{t-1}]=0$ for all
$t=1,\ldots, T$.

\item Suppose that $\{y_{t}y_{t}^{\intercal}-\mathbb{E}[y_{t}y_{t}^{\intercal
}],\mathcal{F}_{t}\}$ is a martingale difference sequence; that is
\[
\mathbb{E}%
\sbr[1]{y_{t,i}y_{t,j}-\mathbb{E}[y_{t,i}y_{t,j}]|\mathcal{F}_{t-1}}=0
\]
for all $i,j=1,\ldots, n$, $t=1,\ldots, T$.
\end{enumerate}
\end{assumption}

Assumption \ref{assu mds} allows us to establish inference results within a
martingale framework. Outside this martingale framework, one encounters the
issue of long-run variance whenever one tries to get some inference result.
This is particularly challenging in the large dimensional case and we hence
shall not consider it in this article.

To derive the asymptotic normality of the minimum distance estimator, we
consider two cases

\begin{enumerate}
[(i)]

\item $\mu$ is unknown but $D$ is known;

\item both $\mu$ and $D$ are unknown.
\end{enumerate}

We will derive the asymptotic normality of the minimum distance estimator for
both these cases. We first comment on the plausibility or relevance of case
(i). We present five situations/arguments to show that case (i) is relevant
and these are by no means exhaustive. First, one could use higher frequency
data to estimate the individual variances and thereby utilise a very large
sample size. But that is not an option for estimating correlations because of
the non-synchronicity problem, which is acute in the large dimensional case
(\cite{parkhonglinton2016}). Second, one could have unbalanced low frequency
data meaning that each firm has a long time series but they start and finish
at different times such that the overlap, which is relevant for estimation of
correlations, can be quite a bit smaller. In that situation one might
standardise marginally using all the individual time series data and then
estimate pairwise correlations using the smaller overlapping data. Third, we
could have a global parametric model for $D$ and $\mu$, but a local (in time)
Kronecker product model for correlations, i.e., $\Theta(u)$ varies with
rescaled time $u=t/T$. In this situation, the initial estimation of $D$ and
$\mu$ can be done at a faster rate than estimation of the time varying
correlation $\Theta(u)$, so effectively $D$ and $\mu$ could be treated as
known quantities. Fourth, case (i) reflects our two-step estimation procedure
where variances are estimated first without imposing any model structure. This
is a common approach in dynamic volatility model estimation such as the DCC
model of \cite{englesheppard2001} and the GO-GARCH model
(\cite{vanderweide2002}). Indeed, in many of the early articles in that
literature standard errors for dynamic parameters of the correlation process
were constructed without regard to the effect of the initial procedure.
Finally, we note that theoretically estimation of $\mu$ and $D$ is well
understood even in the high dimensional case, so in keeping with much practice
in the literature we do not emphasize estimation of $\mu$ and $D$ again.

Define the following $n^{2}\times n^{2}$ dimensional matrix $H$:
\[
H:=\int_{0}^{1}[t(\Theta-I)+I]^{-1}\otimes\lbrack t(\Theta-I)+I]^{-1}dt.
\]
Define also the $n\times n$ and $n^{2}\times n^{2}$ matrices, respectively:
\begin{align}
&  \tilde{\Sigma}_{T}:=\frac{1}{T}\sum_{t=1}^{T}(y_{t}-\mu)(y_{t}%
-\mu)^{\intercal}.\label{align tilde Sigma}\\
&  V:=\text{var}%
\del[2]{\ve\sbr[1]{(y_{t}-\mu)(y_{t}-\mu )^{\intercal}}}\nonumber\\
&  = \mathbb{E}%
\sbr[1]{(y_{t}-\mu)(y_{t}-\mu)^{\intercal}\otimes (y_{t}-\mu) (y_{t}-\mu)^{\intercal}}-\mathbb{E}%
\sbr[1]{(y_{t}-\mu)\otimes (y_{t}-\mu)}\mathbb{E}%
\sbr[1]{(y_{t}-\mu)^{\intercal}\otimes (y_{t}-\mu)^{\intercal}}.\nonumber
\end{align}
Since $x\mapsto(\lceil\frac{x}{n}\rceil,x-\lfloor\frac{x}{n}\rfloor n)$ is a
bijection from $\{1,\ldots,n^{2}\}$ to $\{1,\ldots,n\}\times\{1,\ldots,n\}$,
it is easy to show that the $(a,b)$th entry of $V$ is
\[
V_{a,b}\equiv V_{i,j,k,\ell}=\mathbb{E}[(y_{t,i}-\mu_{i})(y_{t,j}-\mu
_{j})(y_{t,k}-\mu_{k})(y_{t,\ell}-\mu_{\ell})]-\mathbb{E}[(y_{t,i}-\mu
_{i})(y_{t,j}-\mu_{j})]\mathbb{E}[(y_{t,k}-\mu_{k})(y_{t,\ell}-\mu_{\ell})],
\]
where $\mu_{i}=\mathbb{E}y_{t,i}$ (similarly for $\mu_{j},\mu_{k},\mu_{\ell}%
$), $a,b\in\{1,\ldots,n^{2}\}$ and $i,j,k,\ell\in\{1,\ldots,n\}$. In the
special case of normality, $V=2D_{n}D_{n}^{+}(\Sigma\otimes\Sigma)$
(\cite{magnusneudecker1986} Lemma 9).

\begin{assumption}
\label{assu mineval of V}
Suppose that $V$ is positive definite for all $n$, with its minimum eigenvalue
bounded away from zero by an absolute constant and maximum eigenvalue bounded
from above by an absolute constant.

\end{assumption}

Assumption \ref{assu mineval of V} is also a standard regularity condition. It
is automatically satisfied under normality given Assumptions
\ref{assu n indexed by T}(i) and \ref{assu about D and Dhat}(i) (via Lemma
\ref{lemmaD}(vi) in Appendix \ref{secArateofconvergence}). Assumption
\ref{assu mineval of V} could be relaxed to the case where the minimum
(maximum) eigenvalue of $V$ is slowly drifting towards zero (infinity) at
certain rate. The proofs for Theorem \ref{thm asymptotic normality} and
Theorem \ref{thm asymptotic normality MD when D is unknown} remain unchanged,
but this rate will need to be incorporated in Assumption
\ref{assu n indexed by T}(ii).

\subsubsection{When $\mu$ Is Unknown But $D$ Is Known}

In this case, $\hat{\Theta}_{T}$ simplifies into $\hat{\Theta}_{T,D}%
:=D^{-1/2}\hat{\Sigma}_{T} D^{-1/2}$. Similarly, the minimum distance
estimator $\hat{\theta}_{T}$ simplifies into $\hat{\theta}_{T,D}%
:=(E^{\intercal}WE)^{-1}E^{\intercal}W\vech (\log\hat{\Theta}_{T,D})$. Let
$\hat{H}_{T,D}$ denote the $n^{2}\times n^{2}$ matrix
\begin{equation}
\label{eqn hat H D}\hat{H}_{T,D}:=\int_{0}^{1}[t(\hat{\Theta}_{T,D}%
-I)+I]^{-1}\otimes\lbrack t(\hat{\Theta}_{T,D}-I)+I]^{-1}dt.
\end{equation}
Define $V$'s sample analogue $\hat{V}_{T}$ whose $(a,b)$th entry is
\begin{align*}
\hat{V}_{T,a,b}\equiv\hat{V}_{T,i,j,k,\ell}  &  :=\frac{1}{T}\sum_{t=1}%
^{T}(y_{t,i}-\bar{y}_{i})(y_{t,j}-\bar{y}_{j})(y_{t,k}-\bar{y}_{k})(y_{t,\ell
}-\bar{y}_{\ell})\\
&  \qquad
-\del[3]{ \frac{1}{T}\sum_{t=1}^{T}(y_{t,i}-\bar{y}_i)(y_{t,j}-\bar{y}_j)}\del[3]{ \frac{1}{T}\sum_{t=1}^{T}(y_{t,k}-\bar{y}_k)(y_{t,\ell}-\bar{y}_{\ell})},
\end{align*}
where $\bar{y}_{i}:=\frac{1}{T}\sum_{t=1}^{T}y_{t,i}$ (similarly for $\bar
{y}_{j}, \bar{y}_{k}$ and $\bar{y}_{\ell}$), $a,b\in\{1,\ldots,n^{2}\}$ and
$i,j,k,\ell\in\{1,\ldots,n\}$.

For any $c\in\mathbb{R}^{s}$ define the scalar
\[
c^{\intercal}J_{D}c:=c^{\intercal}(E^{\intercal}WE)^{-1}E^{\intercal}%
WD_{n}^{+}H(D^{-1/2}\otimes D^{-1/2})V(D^{-1/2}\otimes D^{-1/2})HD_{n}%
^{+^{\intercal}}WE(E^{\intercal}WE)^{-1}c.
\]
In the special case of normality, $c^{\intercal}J_{D}c$ could be simplified
into (see Example \ref{ex GD special case} in SM \ref{secSM.cor.cramerwold}
for details): $2c^{\intercal}(E^{\intercal}WE)^{-1}E^{\intercal}WD_{n}%
^{+}H(\Theta\otimes\Theta)HD_{n}^{+^{\intercal}}WE(E^{\intercal}WE)^{-1}c.$ We
also define the estimate $c^{\intercal}\hat{J}_{T,D}c$:
\begin{align*}
&  c^{\intercal}\hat{J}_{T,D}c :=c^{\intercal}(E^{\intercal}WE)^{-1}%
E^{\intercal}WD_{n}^{+}\hat{H}_{T,D}(D^{-1/2}\otimes D^{-1/2})\hat{V}%
_{T}(D^{-1/2}\otimes D^{-1/2})\hat{H}_{T,D}D_{n}^{+^{\intercal}}%
WE(E^{\intercal}WE)^{-1}c.
\end{align*}

\bigskip

\begin{thm}
\label{thm asymptotic normality} Let Assumptions \ref{assu subgaussian vector}%
(i), \ref{assu mixing}, \ref{assu n indexed by T}(ii),
\ref{assu about D and Dhat}, \ref{assu mds} and \ref{assu mineval of V} be
satisfied with $1/r_{1}+1/r_{2}>1$. In particular we set $r_{1}=2$. Then
\[
\frac{\sqrt{T}c^{\intercal}(\hat{\theta}_{T,D}-\theta^{0})}{\sqrt
{c^{\intercal}\hat{J}_{T,D}c}}\xrightarrow{d}N(0,1),
\]
for any $s \times1$ non-zero vector $c$ with $\|c\|_{2}=1$.
\end{thm}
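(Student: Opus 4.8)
The plan is to use the closed form of $\hat{\theta}_T$, linearise the matrix logarithm, reduce $c^\intercal(\hat{\theta}_T-\theta^0)$ to a normalised average of i.i.d.\ terms plus an asymptotically negligible remainder, and then apply a triangular-array central limit theorem together with a consistency argument for $\hat{G}_T$ and Slutsky's theorem.

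From (\ref{eqn thetaTW closed form solution}) and (\ref{align theta0 expression}),
\[
\hat{\theta}_T-\theta^0=(E^\intercal WE)^{-1}E^\intercal W\,\text{vech}(\log M_T-\log\Theta).
\]
I would invoke Proposition \ref{prop matrix log expansion} with $A=\Theta$ and $B=M_T-\Theta$, whose hypotheses hold with probability approaching one by Assumptions \ref{assu about D and Dhat} and \ref{assu remaining assus for norm consistency} together with Proposition \ref{prop spectral rate MT and Theta}. Vectorising the Fr\'echet-derivative term (its integrand $[t(\Theta-I)+I]^{-1}$ is symmetric, so $\text{vec}(AXA)=(A\otimes A)\text{vec}(X)$ applies) and using $\text{vec}(M_T-\Theta)=(D^{-1/2}\otimes D^{-1/2})\text{vec}(\tilde{\Sigma}-\Sigma)$, this gives
\[
c^\intercal(\hat{\theta}_T-\theta^0)=a_T^\intercal\,\text{vec}(\tilde{\Sigma}-\Sigma)+c^\intercal(E^\intercal WE)^{-1}E^\intercal WD_n^+r_T,
\]
where $a_T^\intercal:=c^\intercal(E^\intercal WE)^{-1}E^\intercal WD_n^+H(D^{-1/2}\otimes D^{-1/2})$ and $r_T=\text{vec}(R_T)$ with $\|R_T\|_{\ell_2}=O_p(\|M_T-\Theta\|_{\ell_2}^2)=O_p(n/T)$ by Proposition \ref{prop spectral rate MT and Theta}(i) (the cubic term being of smaller order under Assumption \ref{assu n indexed by T}), hence $\|r_T\|_2\le\sqrt{n}\,\|R_T\|_{\ell_2}=O_p(n^{3/2}/T)$.

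For the leading term, $\sqrt{T}\,a_T^\intercal\text{vec}(\tilde{\Sigma}-\Sigma)=T^{-1/2}\sum_{t=1}^T a_T^\intercal\xi_t$ with $\xi_t:=\text{vec}((x_t-\mu)(x_t-\mu)^\intercal)-\text{vec}(\Sigma)$ i.i.d., mean zero, and $\text{var}(a_T^\intercal\xi_t)=a_T^\intercal Va_T=G$ as in (\ref{align matrix G}). Because $E$, $W$, $D_n^+$, $H$ and $D^{-1/2}$ all have full rank, $a_T\neq0$ and $G>0$; setting $b_T:=a_T/\sqrt{G}$ we have $b_T^\intercal Vb_T=1$, so $\|b_T\|_2^2\le1/\text{mineval}(V)=O(1)$ by Assumption \ref{assu mineval of V}. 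Writing $b_T^\intercal\xi_t$ as a centred quadratic form in $x_t-\mu$ with coefficient matrix of Frobenius norm $\|b_T\|_2=O(1)$, a Bernstein/Hanson--Wright inequality for quadratic forms of subgaussian vectors (Assumption \ref{assu subgaussian vector}(i)) shows that $b_T^\intercal\xi_t$ is sub-exponential with an $O(1)$ parameter, whence $\mathbb{E}|b_T^\intercal\xi_t|^{2+\delta}=O(1)$ for every fixed $\delta>0$ and the Lyapunov ratio is $O(T^{-\delta/2})\to0$. The Lindeberg--Feller CLT for triangular arrays then yields $T^{-1/2}G^{-1/2}\sum_t a_T^\intercal\xi_t\xrightarrow{d}N(0,1)$.

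It remains to verify that (a) $\sqrt{T}\,G^{-1/2}|c^\intercal(E^\intercal WE)^{-1}E^\intercal WD_n^+r_T|=o_p(1)$ and (b) $\hat{G}_T/G\xrightarrow{p}1$, after which Slutsky's theorem completes the proof. The quantitative input for both is a two-sided bound $c_1/(\kappa(W)\sqrt{n})\le\|a_T\|_2\le c_2\kappa(W)/\sqrt{n}$ obtained from the explicit spectrum of $E^\intercal E$ (Proposition \ref{prop decode E}(iii): eigenvalues $n/2$ and $n$), the facts that $H$ and $D^{-1/2}$ have minimum eigenvalue bounded away from zero and $\Theta$, $D$ have maximum eigenvalue bounded above (Assumption \ref{assu about D and Dhat}), and $\|(E^\intercal WE)^{-1}E^\intercal WD_n^+\|_{\ell_2}=O(\kappa(W)/\sqrt{n})$; in particular $\sqrt{G}\gtrsim1/(\kappa(W)\sqrt{n})$, so the scaled remainder in (a) is $O_p(\kappa^2(W)n^{3/2}/\sqrt{T})$, which Assumption \ref{assu n indexed by T}(ii) forces to zero. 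For (b) I would replace $H$ by $\hat{H}_T$ — Lipschitz continuity of $A\mapsto[t(A-I)+I]^{-1}$ gives $\|\hat{H}_T-H\|_{\ell_2}=O_p(\sqrt{n/T})$, hence $\|\hat{a}_T-a_T\|_2=o_p(\|a_T\|_2)$ under Assumption \ref{assu n indexed by T}(ii) — and $V$ by $\hat{V}_T$, using the subgaussianity of $x_t$ and a Bernstein bound to control the sampling error of the fourth-moment array $\hat{V}_T-V$ (this is where the $T^{2/\gamma}\log^2 n$ and $\log^5 n^4$ factors in Assumption \ref{assu n indexed by T}(ii) enter), then propagating both errors through the bound on $\|a_T\|_2$. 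The main obstacle is exactly this bookkeeping: since the normalising constant $\sqrt{G}$ may be as small as $1/(\kappa(W)\sqrt{n})$, the quadratic remainder of the logarithm expansion, the error $\hat{H}_T-H$, and the fourth-moment error each must be shown to vanish after division by this small quantity, and the seemingly baroque rate condition in Assumption \ref{assu n indexed by T}(ii) is calibrated precisely to kill all three simultaneously.
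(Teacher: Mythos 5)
Your proof follows the same architecture as the paper's (closed form for $\hat{\theta}_T$, linearisation of the matrix logarithm via Proposition \ref{prop matrix log expansion}, a triangular-array CLT for the leading term, negligibility of the quadratic remainder, and consistency of $\hat{G}_T$ through $\hat{H}_T-H$ and $\hat{V}_T-V$), but there is a genuine gap in the CLT step. You claim that $b_T^{\intercal}\xi_t$ is a quadratic form with coefficient matrix of Frobenius norm $O(1)$ and hence, by Hanson--Wright, sub-exponential with an $O(1)$ parameter, so that $\mathbb{E}|b_T^{\intercal}\xi_t|^{2+\delta}=O(1)$ and Lyapunov holds with no rate condition on $(n,T)$. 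Hanson--Wright requires $x_t-\mu$ to have \emph{independent} subgaussian coordinates (or Gaussianity, i.e.\ Assumption \ref{assu subgaussian vector}(ii), which is deliberately not imposed in this theorem); Assumption \ref{assu subgaussian vector}(i) only gives uniformly subgaussian one-dimensional marginals, and under that hypothesis the dimension-free bound is false in general. For instance, take $x_t=\nu_t g_t$ with $g_t\sim N(0,I_n)$ and $\nu_t$ an independent bounded scalar with $\mathrm{var}(\nu_t^2)>0$: every marginal is uniformly subgaussian, yet for the coefficient matrix $B=I_n/\sqrt{n}$ (Frobenius norm one) the centred quadratic form fluctuates at order $\sqrt{n}$, so its $(2+\delta)$-th moment diverges; variants of this construction can even keep $\mathrm{maxeval}(V)=O(1)$ while the $(2+\delta)$-moment grows like $n^{\delta/2}$. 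The paper never attempts a dimension-free moment bound: it bounds $\mathbb{E}|U_{T,n,t}|^{\gamma}$ by $\|a_T\|_2^{\gamma}\,\mathbb{E}\|(x_t-\mu)(x_t-\mu)^{\intercal}-\Sigma\|_F^{\gamma}\lesssim(\kappa(W)/n)^{\gamma/2}(n\log n)^{\gamma}$ using Orlicz-norm bounds on $\max_{i,j}|(x_t-\mu)_i(x_t-\mu)_j|$, and the Lyapunov ratio then vanishes precisely because of the $T^{2/\gamma}\log^2 n$ part of Assumption \ref{assu n indexed by T}(ii) (equivalently $\kappa(W)n\log n/T^{1/2-1/\gamma}\to 0$). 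So that part of the rate condition is consumed by the CLT itself, not only by the $\hat{V}_T-V$ step as you suggest; to close your argument you must either add an independent-components or Gaussian assumption to justify Hanson--Wright, or fall back on the paper's cruder moment bound.

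A secondary bookkeeping point: your lower bound $\sqrt{G}\gtrsim 1/(\kappa(W)\sqrt{n})$ is lossier than what is available. The paper obtains $G\ge c/(n\kappa(W))$ by bounding $G$ below through $\mathrm{mineval}(V)$, $\mathrm{mineval}(H^2)$, $\mathrm{mineval}(W)$ and $\mathrm{mineval}\bigl((E^{\intercal}WE)^{-1}\bigr)\ge 1/(n\,\mathrm{maxeval}(W))$, so that $1/\sqrt{G}=O(\sqrt{n\kappa(W)})$. With your weaker bound, the consistency requirement for the estimated variance becomes $|\hat{G}_T-G|=o_p\bigl(1/(n\kappa^2(W))\bigr)$ rather than the $o_p\bigl(1/(n\kappa(W))\bigr)$ the paper proves, and the extra factor of $\kappa(W)$ is not covered by Assumption \ref{assu n indexed by T}(ii) as stated; with the sharper bound on $G$ your remainder and variance-estimation steps go through exactly as in the paper.
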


\begin{proof}
See Appendix \ref{sec A4}.
\end{proof}

\bigskip

Theorem \ref{thm asymptotic normality} is a version of the large-dimensional
CLT, whose proof is mathematically non-trivial. To simplify the technicality,
we assume subgaussianity ($r_{1}=2$). Because the dimension of $\theta^{0}$ is
growing with the sample size, for a CLT to make sense, we need to transform
$\hat{\theta}_{T,D}-\theta^{0}$ to a univariate quantity by pre-multiplying
$c^{\intercal}$. The magnitudes of the elements of $c$ are not important, so
we normalize it to have unit Euclidean norm. What is important is whether the
elements of $c$ are zero or not. The components of $\hat{\theta}_{T,D}%
-\theta^{0}$ whose positions correspond to the non-zero elements of $c$ are
effectively entering the CLT.

We contribute to the literature on the large-dimensional CLT (see
\cite{huber1973}, \cite{yohaimaronna1979}, \cite{portnoy1985},
\cite{mammen1989}, \cite{welsh1989}, \cite{baiwu1994},
\cite{saikkonenlutkepohl1996} and \cite{heshao2000}). In this strand of
literature, a distinct feature is that the dimension of parameter, say,
$\theta^{0}$, is growing with the sample size, and at the same time we do not
impose sparsity on $\theta^{0}$. As a result, the rate of growth of dimension
of parameter has to be restricted by an assumption like Assumption
\ref{assu n indexed by T}(ii); in particular, the dimension of parameter
cannot exceed the sample size. Assumption \ref{assu n indexed by T}(ii)
necessarily requires $n^{4}/T\to0$. In \cite{lewisreinsel1985},
\cite{saikkonenlutkepohl1996}, \cite{changchenchen2015}, they require
$n^{3}/T\to0$ for establishment of a CLT for an $n$-dimensional parameter.
Hence there is much room of improvement for Assumption
\ref{assu n indexed by T}(ii) because the dimension of $\theta^{0}$ is
$s=O(\log n)$. The difficulty for this relaxation is again, as we had
mentioned when we discussed the rate of convergence of $\hat{\theta}_{T}$
(Theorem \ref{thm main rate of convergence}), due to the severe non-linearity
introduced by matrix logarithm. In this sense Assumption
\ref{assu n indexed by T}(ii) is only a sufficient condition; the same
reasoning applies to Assumption \ref{assu n indexed by T}(iii).

Our approach is different from the recent literature on high-dimensional
statistics such as Lasso, where one imposes sparsity on parameter to allow its
dimension to exceed the sample size.

We also give a corollary which allows us to test multiple hypotheses like
$H_{0}: A^{\intercal}\theta^{0} = a$.

\begin{corollary}
\label{cor cramerwold} Let Assumptions \ref{assu subgaussian vector}(i),
\ref{assu mixing}, \ref{assu n indexed by T}(ii), \ref{assu about D and Dhat},
\ref{assu mds} and \ref{assu mineval of V} be satisfied with $1/r_{1}%
+1/r_{2}>1$. In particular we set $r_{1}=2$. Given a full-column-rank $s
\times k$ matrix $A$ where $k$ is \textit{finite} with $\|A\|_{\ell_{2}%
}=O(\sqrt{\log n\cdot n\kappa(W)})$, we have
\[
\sqrt{T}(A^{\intercal}\hat{J}_{T,D}A)^{-1/2}A^{\intercal}(\hat{\theta}%
_{T,D}-\theta^{0})\xrightarrow{d}N\del [1]{0,I_k}.
\]

\end{corollary}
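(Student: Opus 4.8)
The plan is to derive the corollary from Theorem \ref{thm asymptotic normality} by the Cram\'er--Wold device, the only real work being to dispose of the random studentizing matrix $(A^{\intercal}\hat{J}_{T}A)^{-1/2}$. First I would fix an arbitrary $\lambda\in\mathbb{R}^{k}$ with $\|\lambda\|_{2}=1$ and reduce the claim, via Cram\'er--Wold, to
\[
\sqrt{T}\,\lambda^{\intercal}(A^{\intercal}\hat{J}_{T}A)^{-1/2}A^{\intercal}(\hat{\theta}_{T}-\theta^{0})\xrightarrow{d}N(0,1).
\]
The key algebraic observation is that, setting $b_{T}:=A(A^{\intercal}\hat{J}_{T}A)^{-1/2}\lambda$ and $c_{T}:=b_{T}/\|b_{T}\|_{2}$, the vector $c_{T}$ has unit norm and $b_{T}^{\intercal}\hat{J}_{T}b_{T}=\lambda^{\intercal}\lambda=1$, so that $\hat{G}_{T}=c_{T}^{\intercal}\hat{J}_{T}c_{T}=1/\|b_{T}\|_{2}^{2}$ and the left-hand side above equals $\sqrt{T}\,c_{T}^{\intercal}(\hat{\theta}_{T}-\theta^{0})/\sqrt{\hat{G}_{T}}$, i.e.\ exactly the studentized scalar of Theorem \ref{thm asymptotic normality}.

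The point needing care is that $c_{T}$ is random, so Theorem \ref{thm asymptotic normality} does not apply verbatim. I would handle this in two moves. (a) Replace the random $\hat{J}_{T}$ by its population analogue $J$ from (\ref{align matrix G}): show $(A^{\intercal}\hat{J}_{T}A)^{-1/2}(A^{\intercal}JA)^{1/2}\xrightarrow{p}I_{k}$, so that by Slutsky it suffices to prove the infeasible vector CLT $(A^{\intercal}JA)^{-1/2}A^{\intercal}\sqrt{T}(\hat{\theta}_{T}-\theta^{0})\xrightarrow{d}N(0,I_{k})$. (b) Establish the latter by a further application of Cram\'er--Wold: for fixed $\mu$ with $\|\mu\|_{2}=1$, put $d_{n}:=A(A^{\intercal}JA)^{-1/2}\mu$, which is deterministic (when $A$ is) and satisfies $d_{n}^{\intercal}Jd_{n}=1$; then $\sqrt{T}\,\mu^{\intercal}(A^{\intercal}JA)^{-1/2}A^{\intercal}(\hat{\theta}_{T}-\theta^{0})=\sqrt{T}\,d_{n}^{\intercal}(\hat{\theta}_{T}-\theta^{0})$ converges to $N(0,1)$ by the infeasible scalar CLT underlying Theorem \ref{thm asymptotic normality} (the intermediate step that yields $\sqrt{T}\,d_{n}^{\intercal}(\hat{\theta}_{T}-\theta^{0})/\sqrt{d_{n}^{\intercal}Jd_{n}}\xrightarrow{d}N(0,1)$ for deterministic direction sequences). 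Since $d_{n}$ enters only through the normalization $d_{n}^{\intercal}Jd_{n}=1$, no condition beyond Assumption \ref{assu n indexed by T}(ii) is needed here.

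The main obstacle is step (a): controlling $A^{\intercal}\hat{J}_{T}A-A^{\intercal}JA$ relative to $\text{mineval}(A^{\intercal}JA)$. Because $\|A\|_{\ell_{2}}^{2}=O_{p}(n\kappa(W))$, the estimation error of $\hat{J}_{T}$ --- which the proof of Theorem \ref{thm asymptotic normality} already bounds in spectral norm, via the Bernstein-type concentration inequalities of Appendix B applied to $\hat{H}_{T}-H$ and $\hat{V}_{T}-V$ --- gets inflated by a factor of order $n\kappa(W)$, and Assumption \ref{assu n indexed by T}(ii) is precisely what forces this inflated error to be $o_{p}(1)$; meanwhile the finiteness of $k$ together with full column rank of $A$ keeps $\text{mineval}(A^{\intercal}JA)$ bounded away from zero. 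Finally, since $\|A\|_{\ell_{2}}=O_{p}(\sqrt{n\kappa(W)})$ allows $A$ itself to be data-dependent, I would reduce to the deterministic-$A$ case by a routine conditioning/subsequence argument, all the convergences above being driven by $n$ and $T$ alone.
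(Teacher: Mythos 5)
Your proposal is correct and follows essentially the same route as the paper: it rests on the infeasible scalar CLT $\sqrt{T}c^{\intercal}(\hat{\theta}_{T}-\theta^{0})\xrightarrow{d}N(0,c^{\intercal}Jc)$ extracted from Theorem \ref{thm asymptotic normality} together with the uniform-in-$c$ bound $|c^{\intercal}\hat{J}_{T}c-c^{\intercal}Jc|=o_{p}\bigl(1/(n\kappa(W))\bigr)$, Cram\'er--Wold applied with deterministic directions built from $A$, the fact that $\|A\|_{\ell_{2}}^{2}=O_{p}(n\kappa(W))$ makes $A^{\intercal}\hat{J}_{T}A-A^{\intercal}JA=o_{p}(1)$, and Slutsky. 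The only (cosmetic) difference is that you control $\hat{J}_{T}-J$ via its spectral norm to get $(A^{\intercal}\hat{J}_{T}A)^{-1/2}(A^{\intercal}JA)^{1/2}\xrightarrow{p}I_{k}$, whereas the paper obtains the same matrix convergence entrywise by a polarization argument with the vectors $e_{j}$ and $e_{ij}$.
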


\begin{proof}
See SM \ref{secSM.cor.cramerwold}.
\end{proof}

\bigskip

Note that the condition $\|A\|_{\ell_{2}}=O(\sqrt{\log n \cdot n\kappa(W)})$
is trivial because the dimension of $A$ is only of order $O(\log n)\times
O(1)$. Moreover we can always rescale $A$ when carrying out hypothesis testing.

If one chooses the weighting matrix $W$ optimally, albeit infeasibly,
\[
W_{D, op}%
=\sbr[1]{D_{n}^{+}H(D^{-1/2}\otimes D^{-1/2})V(D^{-1/2}\otimes D^{-1/2})HD_{n}^{+\intercal}}^{-1}%
,
\]
the scalar $c^{\intercal}J_{D}c$ reduces to
\[
c^{\intercal}%
\del [2]{E^{\intercal}\sbr[1]{D_{n}^{+}H(D^{-1/2}\otimes D^{-1/2})V(D^{-1/2}\otimes D^{-1/2})HD_{n}^{+\intercal}}^{-1}E}^{-1}%
c.
\]
Under a further assumption of normality (i.e., $V=2D_{n}D_{n}^{+}%
(\Sigma\otimes\Sigma)$), the preceding display further simplifies to
\[
c^{\intercal}%
\del [3]{\frac{1}{2}E^{\intercal}D_n^{\intercal}H^{-1}(\Theta^{-1}\otimes \Theta^{-1})H^{-1}D_nE}^{-1}%
c,
\]
by Lemmas 11 and 14 of \cite{magnusneudecker1986}. We shall compare the
preceding display with the variance of the asymptotic distribution of the
one-step estimator in Section \ref{sec QMLE}.

\subsubsection{When Both $\mu$ and $D$ Are Unknown}

The case where both $\mu$ and $D$ are unknown is considerably more difficult.
If one simply recycles the proof for the case where only $\mu$ is unknown and
replaces $D$ with its plug-in estimator $\hat{D}_{T}$, it will not work.

Let $\hat{H}_{T}$ denote the $n^{2}\times n^{2}$ matrix
\begin{equation}
\label{eqn hat H}\hat{H}_{T}:=\int_{0}^{1}[t(\hat{\Theta}_{T}-I)+I]^{-1}%
\otimes\lbrack t(\hat{\Theta}_{T}-I)+I]^{-1}dt.
\end{equation}
Define the $n^{2}\times n^{2}$ matrix $P$:
\[
P:=I_{n^{2}}-D_{n}D_{n}^{+}(I_{n}\otimes\Theta)M_{d},\qquad M_{d}:=\sum
_{i=1}^{n}(F_{ii}\otimes F_{ii}),
\]
where $F_{ii}$ is an $n\times n$ matrix with one in its $(i,i)$th position and
zeros elsewhere. Matrix $M_{d}$ is an $n^{2}\times n^{2}$ diagonal matrix with
diagonal elements equal to 0 or 1; the positions of 1 in the diagonal of
$M_{d}$ correspond to the positions of diagonal entries of an arbitrary
$n\times n$ matrix $A$ in $\ve A$. Matrix $P$ first appeared in (4.6) of
\cite{neudeckerwesselman1990}. Note that for any correlation matrix $\Theta$,
matrix $P$ is an idempotent matrix of rank $n^{2}-n$ and has $n$ rows of
zeros. \cite{neudeckerwesselman1990} proved that
\[
\frac{\partial\ve\Theta}{\partial\ve\Sigma}=P(D^{-1/2}\otimes D^{-1/2});
\]
that is, the derivative $\frac{\partial\ve\Theta}{\partial\ve\Sigma}$ is a
function of $\Sigma$.

For any $c\in\mathbb{R}^{s}$ define the scalar $c^{\intercal}Jc$ and its
estimate $c^{\intercal}\hat{J}_{T}c$:%
\[
c^{\intercal}Jc:=c^{\intercal}(E^{\intercal}WE)^{-1}E^{\intercal}WD_{n}%
^{+}HP(D^{-1/2}\otimes D^{-1/2})V(D^{-1/2}\otimes D^{-1/2})P^{\intercal}%
HD_{n}^{+^{\intercal}}WE(E^{\intercal}WE)^{-1}c.
\]
\[
c^{\intercal}\hat{J}_{T}c:=c^{\intercal}(E^{\intercal}WE)^{-1}E^{\intercal
}WD_{n}^{+}\hat{H}_{T}\hat{P}_{T}(\hat{D}_{T}^{-1/2}\otimes\hat{D}_{T}%
^{-1/2})\hat{V}_{T}(\hat{D}_{T}^{-1/2}\otimes\hat{D}_{T}^{-1/2})\hat{P}%
_{T}^{\intercal}\hat{H}_{T}D_{n}^{+^{\intercal}}WE(E^{\intercal}WE)^{-1}c,
\]
where $\hat{P}_{T}:=I_{n^{2}}-D_{n}D_{n}^{+}(I_{n}\otimes\hat{\Theta}%
_{T})M_{d}.$

\begin{assumption}
\label{assu when D is unknown}

\item
\begin{enumerate}
[(i)]

\item For every positive constant $C$
\[
\sup_{\Sigma^{*}:\|\Sigma^{*}-\Sigma\|_{F}\leq C\sqrt{\frac{n^{2}}{T}}%
}%
\enVert[4]{\left. \frac{\partial \ve \Theta}{\partial \ve \Sigma}\right| _{\Sigma=\Sigma^*}-P(D^{-1/2}\otimes D^{-1/2})}_{\ell
_{2}}=O\del[3]{\sqrt{\frac{n}{T}}},
\]
where $\cdot|_{\Sigma=\Sigma^{*}}$ means "evaluate the argument $\Sigma$ at
$\Sigma^{*}$".

\item The $s\times s$ matrix
\[
E^{\intercal}WD_{n}^{+}HP(D^{-1/2}\otimes D^{-1/2})V(D^{-1/2}\otimes
D^{-1/2})P^{\intercal}HD_{n}^{+^{\intercal}}WE
\]
has full rank $s$ (i.e, being positive definite). Moreover,
\begin{align*}
&  \text{mineval}%
\del [2]{E^{\intercal}WD_{n}^{+}HP(D^{-1/2}\otimes D^{-1/2})V(D^{-1/2}\otimes D^{-1/2})P^{\intercal}HD_{n}^{+^{\intercal}}WE}\geq
\frac{n}{\varpi}\text{mineval}^{2}(W).
\end{align*}

\end{enumerate}
\end{assumption}

Assumption \ref{assu when D is unknown}(i) characterises some sort of uniform
rate of convergence in terms of spectral norm of the Jacobian matrix
$\frac{\partial\ve \Theta}{\partial\ve \Sigma}$. This type of assumption is
usually made when one wants to stop Taylor expansion, say, of $\ve \hat
{\Theta}_{T}$, at first order. If one goes into the second-order expansion (a
tedious route), Assumption \ref{assu when D is unknown}(i) can be completely
dropped at some expense of further restricting the relative growth rate
between $n$ and $T$. The radius of the shrinking neighbourhood $\sqrt{n^{2}%
/T}$ is determined by the rate of convergence in terms of the Frobenius norm
of the sample covariance matrix $\hat{\Sigma}_{T}$. The rate on the right side
of Assumption \ref{assu when D is unknown}(i) is chosen to be $\sqrt{n/T}$
because it is the rate of convergence of
\[
\enVert[4]{\left. \frac{\partial \ve \Theta}{\partial \ve \Sigma}\right| _{\Sigma=\hat{\Sigma}_T}-P(D^{-1/2}\otimes D^{-1/2})}_{\ell
_{2}}
\]
which could be easily deduced from the proof of Theorem
\ref{thm asymptotic normality MD when D is unknown}. This rate $\sqrt{n/T}$
could even be relaxed to $\sqrt{n^{2}/T}$ as the part of the proof of Theorem
\ref{thm asymptotic normality MD when D is unknown} which requires Assumption
\ref{assu when D is unknown}(i) is not the "binding" part of the whole proof.

We now examine Assumption \ref{assu when D is unknown}(ii). The $s\times s$
matrix
\[
E^{\intercal}WD_{n}^{+}HP(D^{-1/2}\otimes D^{-1/2})V(D^{-1/2}\otimes
D^{-1/2})P^{\intercal}HD_{n}^{+^{\intercal}}WE
\]
is symmetric and positive semidefinite. By Observation 7.1.8 of
\cite{hornjohnson2013}, its rank is equal to $\text{rank}(E^{\intercal}%
WD_{n}^{+}HP)$, if $(D^{-1/2}\otimes D^{-1/2})V(D^{-1/2}\otimes D^{-1/2})$ is
positive definite. In other words, Assumption \ref{assu when D is unknown}(ii)
is assuming $\text{rank}(E^{\intercal}WD_{n}^{+}HP)=s$, provided
$(D^{-1/2}\otimes D^{-1/2})V(D^{-1/2}\otimes D^{-1/2})$ is positive definite.
Even though $P$ has only rank $n^{2}-n$, in general the rank condition does
hold except in a special case. The special case is $\Theta=I_{n}$ and
$W=I_{n(n+1)/2}$. In this special case
\[
\text{rank}(E^{\intercal}WD_{n}^{+}HP)=\text{rank}(E^{\intercal}D_{n}%
^{+}P)=\sum_{j=1}^{v}\frac{n_{j}(n_{j}-1)}{2}<s.
\]
The second part of Assumption \ref{assu when D is unknown}(ii) postulates a
lower bound for its minimum eigenvalue. The rate $\text{mineval}^{2}(W)
n/\varpi$ is specified as such because of Assumption
\ref{assu about D and Dhat}(ii). Other magnitudes of the rate are also
possible as long as the proof of Theorem
\ref{thm asymptotic normality MD when D is unknown} goes through.

\begin{thm}
\label{thm asymptotic normality MD when D is unknown} Let Assumptions
\ref{assu subgaussian vector}(i), \ref{assu mixing}, \ref{assu n indexed by T}%
(ii), \ref{assu about D and Dhat}, \ref{assu mds}, \ref{assu mineval of V} and
\ref{assu when D is unknown} be satisfied with $1/r_{1}+1/r_{2}>1$. In
particular we set $r_{1}=2$. Then
\[
\frac{\sqrt{T}c^{\intercal}(\hat{\theta}_{T}-\theta^{0})}{\sqrt{c^{\intercal
}\hat{J}_{T}c}}\xrightarrow{d}N(0,1),
\]
for any $s \times1$ non-zero vector $c$ with $\|c\|_{2}=1$.
\end{thm}

\begin{proof}
See SM \ref{sec A6}.
\end{proof}

\bigskip

Again Theorem \ref{thm asymptotic normality MD when D is unknown} is a version
of the large-dimensional CLT, whose proof is mathematically non-trivial. It
has the same structure as that of Theorem \ref{thm asymptotic normality}.
However $c^{\intercal}\hat{J}_{T}c$ differs from $c^{\intercal}\hat{J}_{T,D}c$
reflecting the difference between $c^{\intercal}Jc$ and $c^{\intercal}J_{D}c$.
That is, the asymptotic distribution of the minimum distance estimator depends
on whether $D$ is known or not.

We also give a corollary which allows us to test multiple hypotheses like
$H_{0}: A^{\intercal}\theta^{0} = a$.

\begin{corollary}
Let Assumptions \ref{assu subgaussian vector}(i), \ref{assu mixing},
\ref{assu n indexed by T}(ii), \ref{assu about D and Dhat}, \ref{assu mds},
\ref{assu mineval of V} and \ref{assu when D is unknown} be satisfied with
$1/r_{1}+1/r_{2}>1$. In particular we set $r_{1}=2$. Given a full-column-rank
$s \times k$ matrix $A$ where $k$ is \textit{finite} with $\|A\|_{\ell_{2}%
}=O(\sqrt{\log^{2}n \cdot n\kappa^{2}(W)\varpi})$, we have
\[
\sqrt{T}(A^{\intercal}\hat{J}_{T}A)^{-1/2}A^{\intercal}(\hat{\theta}%
_{T}-\theta^{0})\xrightarrow{d}N\del [1]{0,I_k}.
\]

\end{corollary}

\begin{proof}
Essentially the same as that of Corollary \ref{cor cramerwold}.
\end{proof}

\bigskip

The condition $\|A\|_{\ell_{2}}=O(\sqrt{\log^{2}n \cdot n\kappa^{2}(W)\varpi
})$ is trivial because the dimension of $A$ is only of order $O(\log n)\times
O(1)$. Moreover we can always rescale $A$ when carrying out hypothesis
testing. In the case of both $\mu$ and $D$ unknown, the infeasible optimal
weighting matrix will be
\[
W_{op}%
=\sbr[1]{D_{n}^{+}HP(D^{-1/2}\otimes D^{-1/2})V(D^{-1/2}\otimes D^{-1/2})P^{\intercal}HD_{n}^{+\intercal}}^{-1}%
.
\]

\subsection{Specification Test}

We give a specification test (also known as an over-identification test) based
on the minimum distance objective function in (\ref{eqn MD objective function}%
). Suppose we want to test whether the Kronecker product model $\{\Theta
^{*}\}$ is correctly specified given the factorization $n=n_{1}\times
\cdots\times n_{v}$. That is,
\[
H_{0}: \Theta\in\{\Theta^{*}\} \quad(i.e., \vech( \log\Theta)=E\theta), \qquad
H_{1}:\Theta\notin\{\Theta^{*}\}.
\]
We first \textit{fix} $n$ (and hence $v$ and $s$). Recall
(\ref{eqn MD objective function}):
\begin{align*}
\hat{\theta}_{T}=\hat{\theta}_{T}(W)  &  :=\arg\min_{b\in\mathbb{R}^{s}
}[\vech (\log\hat{\Theta}_{T})-Eb]^{\intercal}W[\vech (\log\hat{\Theta}%
_{T})-Eb] =:\arg\min_{b\in\mathbb{R}^{s}}g_{T}(b)^{\intercal}Wg_{T}(b).
\end{align*}

\begin{thm}
\label{thm overidentification test fixed dim} Fix $n$ (and hence $v$ and $s$).

\begin{enumerate}
[(i)]

\item Suppose $\mu$ is unknown but $D$ is known. Let Assumptions
\ref{assu subgaussian vector}(i), \ref{assu mixing},
\ref{assu about D and Dhat}, \ref{assu mds} and \ref{assu mineval of V} be
satisfied with $1/r_{1}+1/r_{2}>1$. In particular we set $r_{1}=2$. Thus,
under $H_{0}$,
\begin{equation}
Tg_{T,D}(\hat{\theta}_{T,D})^{\intercal}\hat{S}_{T,D}^{-1}g_{T,D}(\hat{\theta
}_{T,D})\xrightarrow{d}\chi_{n(n+1)/2-s}^{2}, \label{eqn chi fix overiden}%
\end{equation}
where
\begin{align*}
g_{T,D}(b)  &  :=\vech(\log\hat{\Theta}_{T,D})-Eb\\
\hat{S}_{T,D}  &  :=D_{n}^{+}\hat{H}_{T,D}(D^{-1/2}\otimes D^{-1/2})\hat
{V}_{T}(D^{-1/2}\otimes D^{-1/2})\hat{H}_{T,D}D_{n}^{+\intercal}.
\end{align*}

\item Suppose both $\mu$ and $D$ are unknown. Let Assumptions
\ref{assu subgaussian vector}(i), \ref{assu mixing},
\ref{assu about D and Dhat}, \ref{assu mds}, \ref{assu mineval of V}, and
\ref{assu when D is unknown} be satisfied with $1/r_{1}+1/r_{2}>1$. In
particular we set $r_{1}=2$. Thus, under $H_{0}$,
\[
Tg_{T}(\hat{\theta}_{T})^{\intercal}\hat{S}_{T}^{-1}g_{T}(\hat{\theta}%
_{T})\xrightarrow{d}\chi_{n(n+1)/2-s}^{2},
\]
where
\[
\hat{S}_{T}:=D_{n}^{+}\hat{H}_{T}\hat{P}_{T}(\hat{D}_{T}^{-1/2}\otimes\hat
{D}_{T}^{-1/2})\hat{V}_{T}(\hat{D}_{T}^{-1/2}\otimes\hat{D}_{T}^{-1/2})\hat
{P}_{T}^{\intercal}\hat{H}_{T}D_{n}^{+^{\intercal}}.
\]

\end{enumerate}
\end{thm}

\begin{proof}
See SM \ref{sec A9}.
\end{proof}

\bigskip

Note that $\hat{S}_{T,D}^{-1}$ and $\hat{S}_{T}^{-1}$ are the feasible
versions of optimal weighting matrices $W_{D,op}$ and $W_{op}$, respectively.
From Theorem \ref{thm overidentification test fixed dim}, we can easily get
the following result of the diagonal path asymptotics, which is more general
than the sequential asymptotics but less general than the joint asymptotics
(see \cite{phillipsmoon1999}).

\begin{corollary}
\label{coro diagonal asymptotics}

\item
\begin{enumerate}
[(i)]

\item Suppose $\mu$ is unknown but $D$ is known. Let Assumptions
\ref{assu subgaussian vector}(i), \ref{assu mixing},
\ref{assu about D and Dhat}, \ref{assu mds} and \ref{assu mineval of V} be
satisfied with $1/r_{1}+1/r_{2}>1$. In particular we set $r_{1}=2$. Under
$H_{0}$,
\[
\frac{Tg_{T,n,D}(\hat{\theta}_{T,n,D})^{\intercal}\hat{S}^{-1}_{T,n,D}%
g_{T,n,D}(\hat{\theta}_{T,n,D})-\sbr[1]{\frac{n(n+1)}{2}-s}}%
{\sbr[1]{n(n+1)-2s}^{1/2}}\xrightarrow{d} N(0,1),
\]
where $n=n_{T}$ as $T\to\infty$.

\item Suppose both $\mu$ and $D$ are unknown. Let Assumptions
\ref{assu subgaussian vector}(i), \ref{assu mixing},
\ref{assu about D and Dhat}, \ref{assu mds}, \ref{assu mineval of V}, and
\ref{assu when D is unknown} be satisfied with $1/r_{1}+1/r_{2}>1$. In
particular we set $r_{1}=2$. Under $H_{0}$,
\[
\frac{Tg_{T,n}(\hat{\theta}_{T,n})^{\intercal}\hat{S}^{-1}_{T,n}g_{T,n}%
(\hat{\theta}_{T,n})-\sbr[1]{\frac{n(n+1)}{2}-s}}{\sbr[1]{n(n+1)-2s}^{1/2}%
}\xrightarrow{d} N(0,1),
\]
where $n=n_{T}$ as $T\to\infty$.
\end{enumerate}
\end{corollary}

\begin{proof}
See SM \ref{sec A9}.
\end{proof}

\section{QMLE and One-Step Estimator}

\label{sec QMLE}

\subsection{QMLE}

\label{sec QMLE original parameter}

In the context of Gaussian quasi-maximum likelihood estimation (QMLE), given a
factorization $n=n_{1}\times\cdots\times n_{v}$, we shall additionally assume
that the Kronecker product model $\{\Theta^{\ast}\}$ is correctly specified
(i.e. $\vech(\log\Theta)=E\theta$). Let $\rho\in\lbrack-1,1]^{s_{\rho}}$ be
the original parameters of some member of the Kronecker product model; we have
mentioned that $s_{\rho}=\sum_{j=1}^{v}n_{j}(n_{j}-1)/2$. Given Assumption
\ref{assu mds}, the log likelihood function in terms of original parameters
$\rho$ for a sample $\{y_{1},y_{2},\ldots,y_{T}\}$ is given by
\begin{align}
&  \ell_{T}(\mu,D,\rho) =-\frac{Tn}{2}\log(2\pi)-\frac{T}{2}\log\left\vert
D^{1/2}\Theta(\rho)D^{1/2}\right\vert -\frac{1}{2}\sum_{t=1}^{T}(y_{t}%
-\mu)^{\intercal}D^{-1/2}\Theta(\rho)^{-1}D^{-1/2}(y_{t}-\mu).
\label{align likelihood rho general}%
\end{align}
Write $\Omega=\Omega(\theta):=\log\Theta.$ Given Assumption \ref{assu mds},
the log likelihood function in terms of log parameters $\theta$ for a sample
$\{y_{1},y_{2},\ldots,y_{T}\}$ is given by
\begin{align}
&  \ell_{T}(\mu,D,\theta)\nonumber\\
&  =-\frac{Tn}{2}\log(2\pi)-\frac{T}{2}\log\left\vert D^{1/2}\exp
(\Omega(\theta))D^{1/2}\right\vert -\frac{1}{2}\sum_{t=1}^{T}(y_{t}%
-\mu)^{\intercal}D^{-1/2}[\exp(\Omega(\theta))]^{-1}D^{-1/2}(y_{t}-\mu).
\label{align likelihood theta general}%
\end{align}

In practice, conditional on some estimates of $\mu$ and $D$, we use an
iterative algorithm based on the derivatives of $\ell_{T}$ with respect to
either $\rho$ or $\theta$ to compute the QMLE of either $\rho$ or $\theta$.
Theorem \ref{prop Haihan score functions and second derivatives} below
provides formulas for the derivatives of $\ell_{T}$ with respect to $\theta$.
The computations required are typically not too onerous, since for example the
Hessian matrix is of an order $\log n$ by $\log n$. See
\cite{singullahmad2012} and \cite{ohlsonahmadavonrosen2013} for a discussion
of estimation algorithms in the case where the data are multiway array and $v$
is of low dimension. Nevertheless since there is quite complicated
non-linearity involved in the definition of the QMLE, it is not so easy to
directly analyse QMLE.

Instead we shall consider a one-step estimator that uses the minimum distance
estimator in Section \ref{sec MD estimation} to provide a starting value and
then takes a Newton-Raphson step towards the QMLE of $\theta$. In the fixed
$n$ case it is known that the one-step estimator is equivalent to the QMLE in
the sense that it shares its asymptotic distribution (\cite{bickel1975}).

Below, for slightly abuse of notation, we shall use $\mu,D, \theta$ to denote
the true parameter (i.e., characterising the data generating process) as well
as the generic parameter of the likelihood function; we will be more specific
whenever any confusion is likely to arise.

\subsection{One-Step Estimator}

Here we only examine the one-step estimator when $\mu$ is unknown but $D$ is
known. When neither $\mu$ nor $D$ is known, one has to differentiate
(\ref{align likelihood theta general}) with respect to both $\theta$ and $D$.
The analysis becomes considerably more involved and we leave it for future
work. Suppose $D$ is known, the likelihood function
(\ref{align likelihood theta general}) reduces to
\begin{align}
&  \ell_{T,D}(\theta,\mu)=\nonumber\\
&  -\frac{Tn}{2}\log(2\pi)-\frac{T}{2}\log\left\vert D^{1/2}\exp(\Omega
(\theta))D^{1/2}\right\vert -\frac{1}{2}\sum_{t=1}^{T}(y_{t}-\mu)^{\intercal
}D^{-1/2}[\exp(\Omega(\theta))]^{-1}D^{-1/2}(y_{t}-\mu).
\label{align likelihood fun when D known}%
\end{align}
It is well-known that for any choice of $\Sigma$ (i.e., $D$ and $\theta$), the
QMLE for $\mu$ is $\bar{y}$. Hence we may define
\[
\hat{\theta}_{QMLE,D}=\arg\max_{\theta}\ell_{T,D}(\theta, \bar{y}).
\]

\bigskip

\begin{thm}
\label{prop Haihan score functions and second derivatives}

\item
\begin{enumerate}
[(i)]

\item The $s\times1$ score function of
(\ref{align likelihood fun when D known}) with respect to $\theta$ takes the
following form\footnote{The likelihood function
(\ref{align likelihood fun when D known}) implicitly assumes Assumption
\ref{assu mds} and positive definiteness of $\Theta$.}
\[
\frac{\partial\ell_{T,D}(\theta,\mu)}{\partial\theta^{\intercal}}=\frac{T}%
{2}E^{\intercal}D_{n}^{\intercal}%
\sbr[3]{\int_{0}^{1}e^{t\Omega}\otimes e^{(1-t)\Omega}dt}\ve\sbr[1]{e^{-\Omega}D^{-1/2}\tilde{\Sigma}_T D^{-1/2} e^{-\Omega}-e^{-\Omega}}
,
\]
where $\tilde{\Sigma}_{T}$ is defined in (\ref{align tilde Sigma}).

\item The $s\times s$ block of the Hessian matrix of
(\ref{align likelihood fun when D known}) corresponding to $\theta$ takes the
following form
\begin{align*}
&  \frac{\partial^{2} \ell_{T,D}(\theta,\mu)}{\partial\theta\partial
\theta^{\intercal}}=\\
&  -\frac{T}{4} E^{\intercal}D_{n}^{\intercal}\int_{0}^{1}\int_{0}%
^{1}%
\del[1]{ e^{-st\Omega}\otimes e^{-(1-s)t\Omega}A e^{-(1-t)\Omega}+e^{-(1-t)\Omega}Ae^{-(1-s)t\Omega}\otimes e^{-st\Omega}}ds\cdot
tdt D_{n} E\\
&  \quad-\frac{T}{4}E^{\intercal}D_{n}^{\intercal} \int_{0}^{1}\int_{0}^{1}
\del[1]{e^{-(1-s)t\Omega}Ae^{-(1-t)\Omega}\otimes e^{-st\Omega}+e^{-st\Omega}\otimes e^{-(1-t)\Omega}A e^{-(1-s)t\Omega} }ds\cdot
tdt D_{n} E
\end{align*}
where $A:= D^{-1/2}\tilde{\Sigma}_{T} D^{-1/2}$. Symmetry of $\frac
{\partial^{2} \ell_{T,D}(\theta,\mu)}{\partial\theta\partial\theta^{\intercal
}}$ is in an obvious way.

\item The negative normalized expected Hessian matrix evaluated at the true
parameter $\theta$ takes the following form
\begin{align}
\Upsilon_{D}  &  := \mathbb{E}%
\sbr[3]{ -\frac{1}{T}\frac{\partial^{2}\ell_{T,D}(\theta,\mu)}{\partial \theta\partial\theta^{\intercal}}}\nonumber\\
&  = \frac{1}{2} E^{\intercal}D_{n}^{\intercal}\int_{0}^{1}\int_{0}%
^{1}%
\del[1]{ e^{-st\Omega}\otimes e^{st\Omega}+ e^{st\Omega}\otimes e^{-st\Omega}}ds\cdot
tdt D_{n} E\label{align Hessian form 1}\\
&  = \frac{1}{2}E^{\intercal}D_{n}^{\intercal}\Psi
\del [1]{e^{-\Omega}\otimes e^{-\Omega}}\Psi D_{n}E,
\label{align Hessian form 2}%
\end{align}
where $\Psi:=\int_{0}^{1}e^{t\Omega}\otimes e^{(1-t)\Omega}dt$.

\item Under normality (i.e., $V=2D_{n}D_{n}^{+}(\Sigma\otimes\Sigma)$), we
have the well-known relation
\[
\Upsilon_{D}=\mathbb{E}%
\sbr[3]{\frac{1}{T}\frac{\partial\ell_{T,D}(\theta,\mu)}{\partial\theta^{\intercal}}\frac{\partial\ell_{T,D}(\theta,\mu)}{\partial\theta} }.
\]

\end{enumerate}
\end{thm}

\begin{proof}
See SM \ref{sec A7}.
\end{proof}

\bigskip

We hence propose the following one-step estimator in the spirit of
\cite{vandervaart1998} p72 or \cite{newey1994} p2150:
\begin{equation}
\tilde{\theta}_{T,D}:=\hat{\theta}_{T,D}-\frac{1}{T}\hat{\Upsilon}_{T,D}%
^{-1}\frac{\partial\ell_{T,D}(\hat{\theta}_{T,D},\bar{y})}{\partial
\theta^{\intercal}}, \label{1step}%
\end{equation}
where $\hat{\Upsilon}_{T,D}$ is a plug-in estimator of $\Upsilon_{D}$ and is
defined as $\frac{1}{2}E^{\intercal}D_{n}^{\intercal}%
\sbr[1]{\int_0^1\int_0^1 \hat{\Theta}_{T,D}^{t+s-1}\otimes \hat{\Theta}_{T,D}^{1-t-s}dtds}D_{n}%
E$ (We show in SM \ref{sec A8} that $\hat{\Upsilon}_{T,D}$ is invertible with
probability approaching 1.) We did not use the plain vanilla one-step
estimator because the Hessian matrix $\frac{\partial^{2}\ell_{T,D}(\theta
,\mu)}{\partial\theta\partial\theta^{\intercal}}$ is rather complicated to analyse.

\subsection{Large Sample Properties}

To provide the large sample theory for the one-step estimator $\tilde{\theta
}_{T,D}$, we make the following assumption.

\begin{assumption}
\label{assu uniform unverifiable condition} For every positive constant $M$
and uniformly in $b\in\mathbb{R}^{s}$ with $\|b\|_{2}=1$,
\[
\sup_{\theta^{*}: \|\theta^{*}-\theta\|_{2}\leq M\sqrt{\frac{n\varpi\kappa
(W)}{T}}}%
\envert[4]{\sqrt{T}b^{\intercal}\sbr[3]{\frac{1}{T}\frac{\partial \ell_{T,D}(\theta^*,\bar{y})}{\partial \theta^{\intercal}}-\frac{1}{T}\frac{\partial \ell_{T,D}(\theta,\bar{y})}{\partial \theta^{\intercal}}-\Upsilon_D(\theta^*-\theta)}}=o_{p}%
(1).
\]

\end{assumption}

\bigskip

Assumption \ref{assu uniform unverifiable condition} is one of the sufficient
conditions needed for the asymptotic normality of $\tilde{\theta}_{T,D}$
(Theorem \ref{thm one step estimator asymptotic normality}). This kind of
assumption is standard in the asymptotics of one-step estimators (see (5.44)
of \cite{vandervaart1998} p71) or of M-estimation (see (C3) of
\cite{heshao2000}). Assumption \ref{assu uniform unverifiable condition}
implies that $\frac{1}{T}\frac{\partial\ell_{T,D}(\theta,\bar{y})}%
{\partial\theta^{\intercal}}$ is differentiable at the true parameter $\theta
$, with derivative tending to $\Upsilon_{D}$ in probability. The radius of the
shrinking neighbourhood $\sqrt{n\varpi\kappa(W)/T}$ is determined by the rate
of convergence of any preliminary estimator, say, $\hat{\theta}_{T,D}$ in our
case.
It is possible to relax the $o_{p}(1)$ on the right side of the display in
Assumption \ref{assu uniform unverifiable condition} to $o_{p}(\sqrt
{n/(\varpi^{2}\log n)})$ by examining the proof of Theorem
\ref{thm one step estimator asymptotic normality}.

\begin{thm}
\label{thm one step estimator asymptotic normality} Suppose that the Kronecker
product model $\{\Theta^{*}\}$ is correctly specified. Let Assumptions
\ref{assu subgaussian vector}(ii), \ref{assu mixing},
\ref{assu n indexed by T}(iii), \ref{assu about D and Dhat}, \ref{assu mds},
and \ref{assu uniform unverifiable condition} be satisfied with $1/r_{1}%
+1/r_{2}>1$ and $r_{1}=2$. Then
\[
\frac{\sqrt{T}c^{\intercal}(\tilde{\theta}_{T,D}-\theta)}{\sqrt{c^{\intercal
}\hat{\Upsilon}_{T,D}^{-1}c}}\xrightarrow{d}N(0,1)
\]
for any $s\times1$ vector $c$ with $\|c\|_{2}=1$.
\end{thm}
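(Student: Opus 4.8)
The plan is to establish asymptotic normality of the one-step estimator $\tilde{\theta}_T$ by the standard expansion argument, using $\hat{\theta}_T$ as the preliminary estimator whose rate is controlled by Theorem \ref{thm norm consistency}. Substituting the definition \eqref{1step} and writing the score at $\hat{\theta}_T$ via Assumption \ref{assu uniform unverifiable condition}, I would first show
\[
\sqrt{T}(\tilde{\theta}_T-\theta^0)=\sqrt{T}(\hat{\theta}_T-\theta^0)-\hat{\Upsilon}_T^{-1}\sqrt{T}\tfrac{1}{T}\tfrac{\partial\ell_T(\hat{\theta}_T)}{\partial\theta^{\intercal}}+\text{(negligible)},
\]
and then invoke Assumption \ref{assu uniform unverifiable condition} with $\theta^*=\hat{\theta}_T$ (legitimate since $\|\hat{\theta}_T-\theta^0\|_2=O_p(\sqrt{n\kappa(W)/T})$) to replace $\tfrac{1}{T}\tfrac{\partial\ell_T(\hat{\theta}_T)}{\partial\theta^{\intercal}}$ by $\tfrac{1}{T}\tfrac{\partial\ell_T(\theta^0)}{\partial\theta^{\intercal}}+\Upsilon(\hat{\theta}_T-\theta^0)$ up to an $o_p(T^{-1/2})$ term (in the direction $b$). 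This collapses the leading terms involving $\hat{\theta}_T-\theta^0$ if $\hat{\Upsilon}_T$ is close enough to $\Upsilon$, leaving
\[
\sqrt{T}b^{\intercal}(\tilde{\theta}_T-\theta^0)=-b^{\intercal}\hat{\Upsilon}_T^{-1}\sqrt{T}\tfrac{1}{T}\tfrac{\partial\ell_T(\theta^0)}{\partial\theta^{\intercal}}+o_p(1).
\]

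Second, I would handle the two consistency-type ingredients. (a) I need $\hat{\Upsilon}_T-\Upsilon\xrightarrow{p}0$ in a suitable operator sense, and more precisely that $\hat{\Upsilon}_T$ is invertible with probability approaching one with $\|\hat{\Upsilon}_T^{-1}\|_{\ell_2}$ bounded — this follows from Proposition \ref{prop spectral rate MT and Theta}(i) (giving $\|M_T-\Theta\|_{\ell_2}=O_p(\sqrt{n/T})=o_p(1)$ under Assumption \ref{assu n indexed by T}(ii)), continuity of the maps $M\mapsto M^t$, $M\mapsto M^{-1}$ and of the integral defining $\hat{\Psi}_{1,T}$, together with the eigenvalue information on $E^{\intercal}E$ from Proposition \ref{prop decode E}(iii), which ensures the sandwiched $\Upsilon$ is nonsingular of fixed dimension $v+1$. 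The key point is that everything here lives in the fixed finite dimension $v+1=O(\log n)$ after the $E^{\intercal}D_n^{\intercal}(\cdots)D_nE$ sandwiching — so ordinary matrix perturbation bounds suffice, provided I control the inner $n^2\times n^2$ factors in spectral norm. (b) I need the cross term $\hat{\Upsilon}_T^{-1}(\hat{\Upsilon}_T-\Upsilon)\sqrt{T}(\hat{\theta}_T-\theta^0)$ to be $o_p(1)$; since $\sqrt{T}(\hat{\theta}_T-\theta^0)=O_p(\sqrt{n\kappa(W)})$, I need $\|\hat{\Upsilon}_T-\Upsilon\|_{\ell_2}=o_p((n\kappa(W))^{-1/2})$, which is where Assumption \ref{assu n indexed by T}(ii) and the extra logarithmic factors are consumed — this rate control is the technically delicate step, requiring concentration bounds on $M_T-\Theta$ sharper than mere $o_p(1)$ (again drawing on Proposition \ref{prop spectral rate MT and Theta} and the subgaussianity in Assumption \ref{assu subgaussian vector}(ii)).

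Third, for the CLT proper I would apply a Lindeberg CLT to the scalar $-b^{\intercal}\Upsilon^{-1}\sqrt{T}\tfrac{1}{T}\tfrac{\partial\ell_T(\theta^0)}{\partial\theta^{\intercal}}$. Using the score formula from Theorem \ref{thm score functions and second derivatives} evaluated at $\theta^0$, and the fact that under correct specification $\mathbb{E}\Psi_2(\theta^0)=0$, this is $\tfrac{1}{\sqrt{T}}\sum_{t=1}^T\xi_{t,T}$ for i.i.d. (in $t$, triangular in $n$) mean-zero terms $\xi_{t,T}=-\tfrac{1}{2}b^{\intercal}\Upsilon^{-1}E^{\intercal}D_n^{\intercal}\Psi_1(\theta^0)(\Theta^0)^{-1}\otimes(\Theta^0)^{-1}\,\text{vec}(\cdots_t)$; a variance computation (using $V=2D_nD_n^+(\Sigma\otimes\Sigma)$ under normality and the Magnus–Neudecker identities already cited in the excerpt) shows $\text{var}(\xi_{t,T})\to$ the same quadratic form that $b^{\intercal}(-\hat{\Upsilon}_T)^{-1}b$ estimates, so by Slutsky the studentized statistic is asymptotically $N(0,1)$. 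Verifying the Lindeberg/Lyapounov condition for $\{\xi_{t,T}\}$ uses subgaussianity to bound fourth (or $2+\delta$) moments and the fact that the relevant matrix norms grow only polynomially in $n$, controlled again by Assumption \ref{assu n indexed by T}(ii).

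The main obstacle I anticipate is step (b) above: getting a sharp enough stochastic bound on $\|\hat{\Upsilon}_T-\Upsilon\|_{\ell_2}$ — equivalently on the Fréchet-derivative-type integrals $\hat{\Psi}_{1,T}-\Psi_1(\theta^0)$ and on $M_T^{-1}\otimes M_T^{-1}-(\Theta^0)^{-1}\otimes(\Theta^0)^{-1}$ in spectral norm — so that it beats the $\sqrt{n\kappa(W)}$ blow-up of the preliminary estimator in the cross term. This is precisely the kind of delicate interplay between the dimension-dependent rates and the conditioning of $W$ that Assumption \ref{assu n indexed by T}(ii) is designed to accommodate, and most of the real work of the proof (deferred to Appendix A) will be in that matrix-perturbation bookkeeping rather than in the CLT or the algebraic expansion.
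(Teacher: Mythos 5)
Your overall route -- expand the one-step update, invoke Assumption \ref{assu uniform unverifiable condition} at $\theta^{*}=\hat{\theta}_{T}$ (legitimate by Theorem \ref{thm norm consistency}), establish invertibility and consistency of $\hat{\Upsilon}_{T}$, prove a Lindeberg CLT for the score in the direction $\Upsilon^{-1}b$, and finish with Slutsky -- is the same as the paper's. The genuine gap is in the scaling of your negligibility thresholds. Under the stated assumptions $\text{mineval}(-\hat{\Upsilon}_{T})\gtrsim n$ and $\text{maxeval}(-\Upsilon)\lesssim n$, so $b^{\intercal}(-\hat{\Upsilon}_{T})^{-1}b\asymp n^{-1}$: the studentizing denominator is of order $n^{-1/2}$, and the leading term $-b^{\intercal}\hat{\Upsilon}_{T}^{-1}\sqrt{T}\,T^{-1}\partial\ell_{T}(\theta^{0})/\partial\theta^{\intercal}$ is itself only $O_{p}(n^{-1/2})$. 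Consequently your displayed expansion with an $o_{p}(1)$ remainder is not enough to conclude the theorem: after dividing by $\sqrt{b^{\intercal}(-\hat{\Upsilon}_{T})^{-1}b}$ an $o_{p}(1)$ remainder is inflated by a factor of order $\sqrt{n}$ and can dominate the main term. Every remainder (the Assumption \ref{assu uniform unverifiable condition} error and the cross term $(-\hat{\Upsilon}_{T})^{-1}(\Upsilon-\hat{\Upsilon}_{T})(\hat{\theta}_{T}-\theta^{0})$) has to be shown to be $o_{p}(n^{-1/2})$. The paper does this by first multiplying through by $-\hat{\Upsilon}_{T}$ and exploiting $\|(-\hat{\Upsilon}_{T})^{-1}\|_{\ell_{2}}=O_{p}(n^{-1})$ (not merely bounded, as you assert) together with $\|\hat{\Upsilon}_{T}-\Upsilon\|_{\ell_{2}}=O_{p}(n\sqrt{n/T})$, giving a cross term of order $O_{p}(\sqrt{n^{2}\kappa(W)/T})=o_{p}(n^{-1/2})$ under Assumption \ref{assu n indexed by T}(ii).

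The same scaling issue affects your Slutsky step: since $b^{\intercal}(-\Upsilon)^{-1}b\asymp n^{-1}$, consistency of the variance estimator must be relative, i.e.\ $n\,b^{\intercal}\bigl((-\hat{\Upsilon}_{T})^{-1}-(-\Upsilon)^{-1}\bigr)b=o_{p}(1)$ (the paper proves $\|n\hat{\Upsilon}_{T}^{-1}-n\Upsilon^{-1}\|_{\ell_{2}}=O_{p}(\sqrt{n/T})$), and the CLT must be formulated for the self-normalized sum, because the unnormalized score term is degenerate. Your requirement $\|\hat{\Upsilon}_{T}-\Upsilon\|_{\ell_{2}}=o_{p}((n\kappa(W))^{-1/2})$ does happen to hold under Assumption \ref{assu n indexed by T}(ii), but it was derived from the wrong target (an $o_{p}(1)$ pre-studentization bound, while ignoring the $O_{p}(n^{-1})$ factor from $\hat{\Upsilon}_{T}^{-1}$), so the argument as written does not deliver the stated limit; once the $o_{p}(n^{-1/2})$ bookkeeping is put in, the rest of your plan goes through exactly as in the paper.
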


\begin{proof}
See SM \ref{sec A8}.
\end{proof}

\bigskip

Theorem \ref{thm one step estimator asymptotic normality} is a version of the
large-dimensional CLT, whose proof is mathematically non-trivial. It has the
same structure as that of Theorem \ref{thm asymptotic normality} or Theorem
\ref{thm asymptotic normality MD when D is unknown}. Note that under
Assumption \ref{assu subgaussian vector}(ii), the QMLE is actually the maximum
likelihood estimator (MLE). If we replace normality (Assumption
\ref{assu subgaussian vector}(ii)) with the subgaussian assumption (Assumption
\ref{assu subgaussian vector}(i) with $r_{1}=2$) - that is the Gaussian
likelihood is not correctly specified - although the norm consistency of
$\tilde{\theta}_{T,D}$ should still hold, the asymptotic variance in Theorem
\ref{thm one step estimator asymptotic normality} needs to be changed to have
a sandwich formula. Theorem \ref{thm one step estimator asymptotic normality}
says that $\sqrt{T}c^{\intercal}(\tilde{\theta}_{T,D}-\theta
)\xrightarrow{d}N\del[1]{0,c^{\intercal}\del[1]{\mathbb{E}\sbr[1]{-\frac{1}{T}\frac{\partial^2\ell_{T,D}(\theta, \mu)}{\partial \theta \partial \theta^{\intercal}}}}^{-1}c}$%
. In the fixed $n$ case, this estimator achieves the parametric efficiency
bound by recognising a well-known result $\frac{\partial^{2}\ell_{T,D}(\theta,
\mu)}{\partial\mu\partial\theta^{\intercal}}=0$. This shows that our one-step
estimator $\tilde{\theta}_{T,D}$ is efficient when $D$ (the variances) is known.

By recognising that $H^{-1}=\int_{0}^{1}e^{t\log\Theta}\otimes e^{(1-t)\log
\Theta}dt=\Psi$ (see Lemma \ref{prop H inverse} in SM
\ref{secSM.cor.cramerwold}), we see that, when $D$ is known, under normality
and correct specification of the Kronecker product model, $\tilde{\theta
}_{T,D}$ and the optimal minimum distance estimator $\hat{\theta}_{T,D}(W_{D,
op})$ have the same asymptotic variance, i.e.,
$\del [1]{\frac{1}{2}E^{\intercal}D_n^{\intercal}H^{-1}(\Theta^{-1}\otimes \Theta^{-1})H^{-1}D_nE}^{-1}%
$.

We also give the following corollary which allows us to test multiple
hypotheses like $H_{0}: A^{\intercal}\theta= a$.

\begin{corollary}
Suppose the Kronecker product model $\{\Theta^{*}\}$ is correctly specified.
Let Assumptions \ref{assu subgaussian vector}(ii), \ref{assu mixing},
\ref{assu n indexed by T}(iii), \ref{assu about D and Dhat}, \ref{assu mds},
and \ref{assu uniform unverifiable condition} be satisfied with $1/r_{1}%
+1/r_{2}>1$ and $r_{1}=2$. Given a full-column-rank $s \times k$ matrix $A$
where $k$ is \textit{finite} with $\|A\|_{\ell_{2}}=O(\sqrt{\log n\cdot n})$,
we have
\[
\sqrt{T}(A^{\intercal}\hat{\Upsilon}_{T,D}^{-1}A)^{-1/2}A^{\intercal}%
(\tilde{\theta}_{T,D}-\theta)\xrightarrow{d}N\del [1]{0,I_k}.
\]

\end{corollary}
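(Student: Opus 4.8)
The plan is to argue exactly as in Corollary~\ref{cor cramerwold}. By the Cram\'er--Wold device it will suffice to show, for an arbitrary nonzero $\lambda\in\mathbb{R}^{k}$ (take $\|\lambda\|_{2}=1$ without loss of generality), that
\[
\sqrt{T}\,\lambda^{\intercal}\big(A^{\intercal}(-\hat{\Upsilon}_{T})^{-1}A\big)^{-1/2}A^{\intercal}(\tilde{\theta}_{T}-\theta^{0})\xrightarrow{d}N(0,1).
\]
Write $\Xi_{T}:=A^{\intercal}(-\hat{\Upsilon}_{T})^{-1}A$; since $-\hat{\Upsilon}_{T}$ is invertible with probability approaching $1$ (shown in Appendix~A) and $A$ has full column rank, $\Xi_{T}$ is a fixed-dimensional ($k\times k$) positive definite matrix with probability approaching $1$. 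Setting $b:=A\Xi_{T}^{-1/2}\lambda$ one has the identity $b^{\intercal}(-\hat{\Upsilon}_{T})^{-1}b=\lambda^{\intercal}\Xi_{T}^{-1/2}\Xi_{T}\Xi_{T}^{-1/2}\lambda=1$, so the displayed target equals $\sqrt{T}\,b^{\intercal}(\tilde{\theta}_{T}-\theta^{0})$, which is exactly the numerator appearing in Theorem~\ref{thm one step estimator asymptotic normality} --- the only wrinkle being that $b$ here is random and not of unit Euclidean norm.

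To remove that wrinkle I would first pass to a deterministic direction. The consistency $-\hat{\Upsilon}_{T}\xrightarrow{p}-\Upsilon$ (in operator norm, uniformly in $n$, as established in Appendix~A) together with the continuous mapping theorem applied to the $k\times k$ matrix $\Xi_{T}$ gives $\Xi_{T}-A^{\intercal}(-\Upsilon)^{-1}A=o_{p}(1)$; hence by Slutsky it is enough to prove the statement with $b$ replaced by the deterministic sequence $b_{T}^{\ast}:=A\big(A^{\intercal}(-\Upsilon)^{-1}A\big)^{-1/2}\lambda$. The proof of Theorem~\ref{thm one step estimator asymptotic normality} in fact goes through for any deterministic sequence of directions whose Euclidean norm is $O(\sqrt{n\kappa(W)})$, the unit-norm statement being merely cosmetic; and $\|b_{T}^{\ast}\|_{2}\leq\|A\|_{\ell_{2}}\,[\text{mineval}(A^{\intercal}(-\Upsilon)^{-1}A)]^{-1/2}$ is of this order, because $\|A\|_{\ell_{2}}=O_{p}(\sqrt{n\kappa(W)})$ by hypothesis while $\text{mineval}(A^{\intercal}(-\Upsilon)^{-1}A)$ is bounded below by a multiple of $[\text{mineval}(A^{\intercal}A)]/n$ --- the $n^{-1}$ reflecting that $-\Upsilon=\tfrac12 E^{\intercal}D_{n}^{\intercal}\Psi_{1}(\theta^{0})([\Theta^{0}]^{-1}\otimes[\Theta^{0}]^{-1})\Psi_{1}(\theta^{0})D_{n}E$ has extreme eigenvalues of order $n$ (use $E^{\intercal}E=\text{diag}(n,\tfrac n2 I_{v})$ from Proposition~\ref{prop decode E} and the eigenvalue bounds on $\Theta^{0}$ in Assumptions~\ref{assu about D and Dhat}--\ref{assu remaining assus for norm consistency}) --- and, if necessary, after the scale normalisation of $A$ noted in the remark following Corollary~\ref{cor cramerwold}.

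What remains is the scalar CLT for $\sqrt{T}\,b_{T}^{\ast\intercal}(\tilde{\theta}_{T}-\theta^{0})$ with $b_{T}^{\ast}$ deterministic, and this I would handle exactly as in the proof of Theorem~\ref{thm one step estimator asymptotic normality}: invoke the asymptotically linear representation
\[
\sqrt{T}(\tilde{\theta}_{T}-\theta^{0})=(-\hat{\Upsilon}_{T})^{-1}\frac{1}{\sqrt{T}}\Big(\frac{\partial\ell_{T}(\theta^{0})}{\partial\theta^{\intercal}}\Big)^{\intercal}+R_{T},
\]
which follows from Assumption~\ref{assu uniform unverifiable condition} and the cancellation of the $\sqrt{T}(\hat{\theta}_{T}-\theta^{0})$ terms in \eqref{1step}, with $R_{T}$ negligible after contraction with directions of order $O(\sqrt{n\kappa(W)})$. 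Under correct specification and Gaussianity (Assumption~\ref{assu subgaussian vector}(ii)) the vector $T^{-1/2}(\partial\ell_{T}(\theta^{0})/\partial\theta^{\intercal})^{\intercal}$, read off from Theorem~\ref{thm score functions and second derivatives}, is a normalised sum of i.i.d.\ mean-zero terms (mean zero because $\mathbb{E}\Psi_{2}(\theta^{0})=0$) whose covariance converges to the Fisher information $-\Upsilon$ by the information equality; contracting with $b_{T}^{\ast\intercal}(-\hat{\Upsilon}_{T})^{-1}$ produces a scalar triangular array of i.i.d.\ summands with asymptotic variance $b_{T}^{\ast\intercal}(-\Upsilon)^{-1}(-\Upsilon)(-\Upsilon)^{-1}b_{T}^{\ast}=b_{T}^{\ast\intercal}(-\Upsilon)^{-1}b_{T}^{\ast}\to1$ by the construction of $b_{T}^{\ast}$, and the Lyapounov condition for it is verified by the same subgaussian/Bernstein bookkeeping used in the proofs of Theorems~\ref{thm asymptotic normality} and~\ref{thm one step estimator asymptotic normality}. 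The main obstacle is purely bookkeeping rather than conceptual: one must check that $R_{T}$ and the Slutsky-replacement errors remain $o_{p}(1)$ \emph{after} pre-multiplication by $b_{T}^{\ast\intercal}$, whose norm may grow like $\sqrt{n\kappa(W)}$, which is precisely the role of the restriction $\|A\|_{\ell_{2}}=O_{p}(\sqrt{n\kappa(W)})$ and its interaction with the rate condition in Assumption~\ref{assu n indexed by T}(ii); since $A$ has only $O(\log n)\times O(1)$ entries this restriction is, as already noted, essentially free.
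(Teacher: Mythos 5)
Your route reaches the right conclusion, but it is organized quite differently from the paper's, whose proof of this corollary is literally ``essentially the same as that of Corollary \ref{cor cramerwold}'': take the studentized scalar CLT of Theorem \ref{thm one step estimator asymptotic normality} together with the variance consistency already established inside its proof (namely $n\,\lvert b^{\intercal}(-\hat{\Upsilon}_{T})^{-1}b-b^{\intercal}(-\Upsilon)^{-1}b\rvert=o_{p}(1)$ and $1/\sqrt{nb^{\intercal}(-\Upsilon)^{-1}b}=O(1)$) to get the unstudentized CLT for \emph{unit} directions; plug in $c=Ab/\Vert Ab\Vert_{2}$ so that all norms cancel inside the ratio; apply the Cram\'er--Wold device to obtain $\sqrt{T}A^{\intercal}(\tilde{\theta}_{T}-\theta^{0})\xrightarrow{d}N(0,A^{\intercal}(-\Upsilon)^{-1}A)$; and finally prove $A^{\intercal}(-\hat{\Upsilon}_{T})^{-1}A-A^{\intercal}(-\Upsilon)^{-1}A=o_{p}(1)$ entrywise by the $e_{j}$/$e_{ij}$ polarization trick (this is where $\Vert A\Vert_{\ell_{2}}=O_{p}(\sqrt{n\kappa(W)})$ enters) and invoke Slutsky. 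You instead fold the studentization into the Cram\'er--Wold reduction, which makes your direction $b=A\Xi_{T}^{-1/2}\lambda$ random, and after de-randomizing you must re-open the proof of Theorem \ref{thm one step estimator asymptotic normality} for deterministic directions of growing norm and re-verify the remainder and Lyapounov bounds. That is exactly the bookkeeping the paper's ordering avoids, since in its route the scalar theorem is only ever used as a black box on unit vectors; what your route buys, if carried out, is a slightly stronger intermediate statement (a CLT for growing-norm deterministic contrasts of $\tilde{\theta}_{T}$), at the cost of redoing the hardest part of the theorem.

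One caveat in your execution: the claim that the theorem's proof ``goes through for any deterministic direction of norm $O(\sqrt{n\kappa(W)})$, the unit-norm statement being merely cosmetic'' is too generous as stated. The remainder inherited from Assumption \ref{assu uniform unverifiable condition} is $o_{p}(n^{-1})$ per unit direction, so after scaling by $\Vert b_{T}^{\ast}\Vert_{2}\lesssim\sqrt{n\kappa(W)}$ it is only $o_{p}(\sqrt{\kappa(W)/n})$, which is not $o_{p}(1)$ unless $\kappa(W)=O(n)$ --- a condition nowhere assumed. The fix is to use the sharper bound that your specific direction actually satisfies: since $b_{T}^{\ast\intercal}(-\Upsilon)^{-1}b_{T}^{\ast}=\Vert\lambda\Vert_{2}^{2}=1$ and $\text{maxeval}(-\Upsilon)\leq Cn$, one automatically has $\Vert b_{T}^{\ast}\Vert_{2}=O(\sqrt{n})$ irrespective of $\Vert A\Vert_{\ell_{2}}$; with this bound the rescaled remainders are $o_{p}(n^{-1/2})$ and $O_{p}(\sqrt{n^{3}\kappa(W)/T})=o_{p}(1)$ under Assumption \ref{assu n indexed by T}(ii), and your program closes (at the same level of rigor as the paper, which, like you, implicitly treats $\text{mineval}(A^{\intercal}(-\Upsilon)^{-1}A)$ as bounded away from zero in the Slutsky step).
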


\begin{proof}
Essentially the same as that of Corollary \ref{cor cramerwold}.
\end{proof}

\bigskip

The condition $\|A\|_{\ell_{2}}=O(\sqrt{\log n\cdot n})$ is trivial because
the dimension of $A$ is only of order $O(\log n)\times O(1)$. Moreover we can
always rescale $A$ when carrying out hypothesis testing.

\section{Model Selection}

\label{sec modelselection}

We discuss the issue of model selection here. One shall not worry about this
if the data are in the multi-index format with $v$ multiplicative factors.
This is because in this setting a Kronecker product model is pinned down by
the structure of multiway arrays - the Kronecker product model is correctly
specified. This issue will pop up when one uses Kronecker product models to
approximate a general covariance or correlation matrix - all Kronecker product
models are then misspecified. The rest of discussions in this section will be
based on this approximation framework.

First, if one permutes the data, the performance of a given Kronecker product
model is likely to change. However, based on our experience, the performance
of a Kronecker product model is not that sensitive to the ordering of the
data. We will illustrate this in the empirical study. Moreover, usually one
fixes the ordering of the data before considering the issue of covariance
matrix estimation. Thus, Kronecker product models have a second-mover
advantage: the choice of a Kronecker product model depends on the ordering of
the data.


Second, if one fixes the ordering of the data as well as a factorization
$n=n_{1}\times\cdots\times n_{v}$, but permutes $\Theta_{j}^{*}$s, one obtains
a different $\Theta^{*}$ (i.e., a different Kronecker product model). Although
the eigenvalues of these two Kronecker product models are the same, the
eigenvectors of them are not.

Third, if one fixes the ordering of the data, but uses a different
factorization of $n$, one also obtains a different Kronecker product model.
Suppose that $n$ has the prime factorization $n=p_{1}\times p_{2}\times
\cdots\times p_{v}$ for some positive integer $v$ ($v\geq2$) and primes
$p_{j}$ for $j=1,\ldots,v$. Then there exist several different Kronecker
product models, each of which is indexed by the dimensions of the
sub-matrices. The baseline model has dimensions $(p_{1},p_{2},\ldots,p_{v})$, but
there are many possible aggregations of this, for example,
$\del [1]{(p_{1}\times p_{2}),\ldots,(p_{v-1}\times p_{v})}$.

To address the second and third issues, we might choose among Kronecker
product models using some model selection criterion which penalizes models
with more parameters. For example, we may define the Bayesian Information
Criterion (BIC) in terms of the original parameters $\rho$:
\[
BIC(\rho)=-\frac{2}{T}\ell_{T}(\mu, D, \rho)+\frac{\log T}{T}s_{\rho},
\]
where $\ell_{T}$ is the log likelihood function defined in
(\ref{align likelihood rho general}), and $s_{\rho}$ is the dimension of
$\rho$. We seek the Kronecker product model with the minimum preceding
display. Typically there are not so many factorizations to consider, so this
is not too computationally burdensome.

\section{Monte Carlo Simulations and an Application}

\label{sec simu}

In this section, we first provide a set of Monte Carlo simulations that
evaluate the performance of the QMLE and MD estimator, and then give a small
application of our Kronecker product model to daily stock returns.

\subsection{Monte Carlo Simulations}

We simulate $T$ random vectors $y_{t}$ of dimension $n$ according to
\begin{align}
y_{t} = \Sigma^{1/2} z_{t},\qquad z_{t} \sim N(0,I_{n})\qquad\Sigma=\Sigma
_{1}\otimes\Sigma_{2}\otimes\cdots\otimes\Sigma_{v}, \label{align.simu.xt}%
\end{align}
where $n=2^{v}$ and $v\in\mathbb{N}$. That is, the sub-matrices $\Sigma_{i}$
are $2\times2$ for $i=1,\ldots,v$. These sub-matrices $\Sigma_{j}$ are
generated with unit variances and off-diagonal elements drawn
\textit{randomly} from a uniform distribution on $(-1,1)$. This ensures
positive definiteness of $\Sigma$. Note that we have two sources of randomness
in this data generating process: random innovations ($z_{t}$) and random
off-diagonal elements of the $\Sigma_{i}$ for $i=1,\ldots,v$. Due to the unit
variances, $\Sigma$ is also the correlation matrix $\Theta$ of $y_{t}$, but
the econometrician is unaware of this: He applies a Kronecker product model to
the correlation matrix $\Theta$. We consider the correctly specified case,
i.e., the Kronecker product model has a factorization $n=2^{v}$. The sample
size is set to $T=300$ while we vary $v$ (hence $n$). We set the Monte Carlo
simulations to 1000.

We shall consider the QMLE and MD estimator. For the QMLE, we estimate the
original parameters $\rho$ and obtain an estimator for $\Theta$ (and hence
$\Sigma$) directly. Recalling (\ref{align likelihood rho general}), we could
use $\ell_{T}(\bar{y}, \hat{D}_{T}, \rho)$ to optimise $\rho$. For the MD
estimator, we estimate the log parameters $\theta^{0}$ via formula
(\ref{eqn thetaTW closed form solution}), obtain an estimator for $\log\Theta
$, and finally obtain an estimator for $\Theta$ (and hence $\Sigma$) via
matrix exponential. In the MD case, we need to specify a choice of the
weighting matrix $W$. Given its sheer dimension ($n(n+1)/2\times n(n+1)/2$),
any non-sparse $W$ will be a huge computational burden in terms of memory for
the MD estimator. Hence we consider two diagonal weighting matrices
\[
W_{1}=I_{n(n+1)/2},\qquad W_{2} =
\sbr[2]{D_n^+\del[1]{\hat{D}_T\otimes \hat{D}_T}D_n^{+\intercal}}^{-1}.
\]
In the latter case, the MD estimator is inversely weighted by the sample
variances. Weighting matrix $W_{2}$ resembles, but is not the same as, a
feasible version of the optimal weighting matrix $W_{op}$. The choice of
$W_{2}$ is based on heuristics. In an unreported simulation, we also consider
the optimally weighted MD estimator. The optimally weighted MD estimator is
extremely computationally intensive and its finite sample performance is not
as good as those weighted by $W_{1}$ or $W_{2}$. This is probably because a
data-driven, large-dimensional weighting matrix introduces additional sizeable
estimation errors in small samples - such a phenomenon has been well
documented in the GMM framework by \cite{andersensorensen1996}.

We compare our estimators with \cite{ledoitwolf2017}'s direct nonlinear
shrinkage estimator (the LW2017 estimator hereafter).\footnote{The Matlab code
for the direct nonlinear shrinkage estimator is downloaded from the website of
Professor Michael Wolf from the Department of Economics at the University of
Zurich. We are grateful for this.}

Given a generic estimator $\tilde{\Sigma}$ of the covariance matrix $\Sigma$
and in each simulation, we can compute
\[
1-\frac{\|\tilde{\Sigma}-\Sigma\|_{F}^{2}}{\|\hat{\Sigma}_{T}-\Sigma\|_{F}%
^{2}}.
\]
The median of the preceding display is calculated among all the simulations
and denoted RI in terms of $\Sigma$. Criterion RI is closely related to the
\textit{percentage relative improvement in average loss} (PRIAL) criterion in
\cite{ledoitwolf2004}.\footnote{It is defined as
\[
\text{PRIAL}=1-\frac{\mathbb{E}\|\tilde{\Sigma}-\Sigma\|_{F}^{2}}%
{\mathbb{E}\|\hat{\Sigma}_{T}-\Sigma\|_{F}^{2}}.
\]
} As PRIAL, RI measures the performance of the estimator $\tilde{\Sigma}$ with
respect to the sample covariance estimator $\hat{\Sigma}_{T}$. Note that RI
$\in(-\infty,1]$: A negative value means $\tilde{\Sigma}$ performs worse than
$\hat{\Sigma}_{T}$ while a positive value means otherwise. RI is more robust
to outliers than PRIAL.

Often an estimator of the precision matrix $\Sigma^{-1}$ is of more interest
than that of $\Sigma$ itself, so we also compute RI for the inverse covariance
matrix; that is, we compute the median of
\[
1-\frac{\|\tilde{\Sigma}^{-1}-\Sigma^{-1}\|_{F}^{2}}{\|\hat{\Sigma}_{T}%
^{-1}-\Sigma^{-1}\|_{F}^{2}}.
\]
Note that this requires invertibility of the sample covariance matrix
$\hat{\Sigma}_{T}$ and therefore can only be calculated for $n<T$.

Our final criterion is the minimum variance portfolio (MVP) constructed from
an estimator of the covariance matrix. The weights of the minimum variance
portfolio are given by
\begin{equation}
\label{eqn minimum variance portfolio weights}w_{MVP}=\frac{\Sigma^{-1}%
\iota_{n}}{\iota_{n}^{^{\intercal}}\Sigma^{-1}\iota_{n}},
\end{equation}
where $\iota_{n}=(1,1,\ldots,1)^{^{\intercal}}$ is of dimension $n$ (see
\cite{ledoitwolf2003}, \cite{chankarceskilakonishok1999} etc).
The first MVP weights are constructed using the sample covariance matrix
$\hat{\Sigma}_{T}$ while the second MVP weights are constructed using a
generic estimator of $\tilde{\Sigma}$. These two minimum variance portfolios
are then evaluated by calculating their standard deviations in the
out-of-sample data ($y_{t}$) generated using the same mechanism. The
out-of-sample size is set to $T^{\prime}=21$. The ratio of the standard
deviation of the minimum variance portfolio constructed from $\tilde{\Sigma}$
over that of the minimum variance portfolio constructed from $\hat{\Sigma}%
_{T}$ is calculated. We report its median (VR) over Monte Carlo simulations.
Note that VR $\in[0,+\infty)$: A value greater than one means $\tilde{\Sigma}$
performs worse than $\hat{\Sigma}_{T}$ while a value less than one means otherwise.

Table \ref{table true kronecker} reports RI-1 (RI in terms of $\Sigma$), RI-2
(RI in terms of $\Sigma^{-1}$) and VR for various $n$. We observe the
following patterns. First, we see that all our estimators QMLE, MD1, MD2
outperform the sample covariance matrix in all dimensional cases including
both the small-dimensional cases (e.g., $n=4$) and the large-dimensional cases
(e.g., $n=256$). Note that in the large dimensional case like $n=256, T=300$,
the ratio $n/T$ is close to 1 - a case not really covered by Assumption
\ref{assu n indexed by T}. This perhaps illustrates that Assumption
\ref{assu n indexed by T} is a sufficient but not necessary condition for
theoretical analysis of our proposed methodology. Second, such a phenomenon
holds in terms of RI-1, RI-2 and VR. The superiority of our estimators over
the sample covariance matrix increases when $n/T$ increases. Third, the QMLE
outperforms the MD estimators whenever $n/T$ is close to one, while the
opposite holds when $n/T$ is small. Fourth, the LW2017 estimator also beats
the sample covariance matrix but its RI-1 margin is thin. This is perhaps not
surprising as the LW2017 estimator does not utilise the Kronecker product
structure of the data generating process. Overall, the QMLE is the best
estimator in this baseline setting.

\begin{table}[ptb]
\centering
\begin{tabular}
[c]{ccccccccc}%
\toprule & $n$ & 4 & 8 & 16 & 32 & 64 & 128 & 256\\
\midrule \multirow{4}{*}{ RI-1} & QMLE & 0.227 & 0.529 & 0.714 & 0.820 &
0.892 & 0.929 & 0.950\\
& MD1 & 0.345 & 0.632 & 0.789 & 0.862 & 0.897 & 0.909 & 0.618\\
& MD2 & 0.339 & 0.631 & 0.785 & 0.858 & 0.896 & 0.908 & 0.616\\
& LW2017 & 0.020 & 0.027 & 0.046 & 0.063 & 0.087 & 0.106 & 0.127\\
\midrule \multirow{4}{*}{RI-2} & QMLE & 0.323 & 0.615 & 0.805 & 0.914 &
0.973 & 0.995 & 1.000\\
& MD1 & 0.354 & 0.632 & 0.771 & 0.752 & 0.665 & 0.588 & 0.837\\
& MD2 & 0.344 & 0.643 & 0.790 & 0.796 & 0.714 & 0.628 & 0.846\\
& LW2017 & 0.136 & 0.181 & 0.235 & 0.351 & 0.521 & 0.756 & 0.991\\
\midrule \multirow{4}{*}{VR} & QMLE & 0.999 & 0.995 & 0.980 & 0.953 & 0.899 &
0.770 & 0.389\\
& MD1 & 0.999 & 0.993 & 0.979 & 0.953 & 0.900 & 0.774 & 0.401\\
& MD2 & 0.999 & 0.993 & 0.979 & 0.954 & 0.899 & 0.774 & 0.400\\
& LW2017 & 1.000 & 0.999 & 0.998 & 0.993 & 0.975 & 0.912 & 0.544\\
\bottomrule &  &  &  &  &  &  &  &
\end{tabular}
\caption{{\protect\small The baseline setting. QMLE, MD1, MD2 and LW2017 stand
for the quasi-maximum likelihood estimator of the Kronecker product model, the
minimum distance estimator (weighted by $W_{1}$) of the Kronecker product
model, the minimum distance estimator (weighted by $W_{2}$) of the Kronecker
product model, and the \cite{ledoitwolf2017}'s direct nonlinear shrinkage
estimator, respectively. RI-1 and RI-2 are RI criteria in terms of $\Sigma$
and $\Sigma^{-1}$, respectively. VR is the median of the ratio of the standard
deviation of the MVP using the estimator to that using the sample covariance
matrix out of sample. The sample size is fixed at $T=300$.}}%
\label{table true kronecker}%
\end{table}

As robustness checks, we consider two modifications of our baseline data
generating process:

\begin{enumerate}
[(i)]

\item Time series $y_{t}$ is still generated as in (\ref{align.simu.xt}) but
the actual data are $w_{t}$:
\begin{align*}
w_{1}  &  = y_{1}\\
w_{t}  &  = a_{w} w_{t-1}+\sqrt{1-a_{w}^{2}}y_{t},\qquad t=2,\ldots, T.
\end{align*}
The parameter $a_{w}$ is set to be 0.5 to capture the temporal dependence.

\item Same as modification (i), but $y_{t}$ is drawn from a multivariate $t$
distribution of 5 degrees of freedom with $\Sigma$ as its correlation matrix.
\end{enumerate}

In modification (i), $w_{t}$ is serially correlated given any non-zero
autoregressive scalar $a_{w}$ but its covariance matrix is still $\Sigma$. A
choice of $a_{w}=0.5$ is consistent with Assumption \ref{assu mixing}. Our
simulation results are reasonably robust to the choice of $a_{w}$. In
modification (ii), in addition to the serial dependence, we add heavy-tailed
features to the data which might be a better reflection of reality.
Heavy-tailed data are not covered by Assumption \ref{assu subgaussian vector},
so this modification serves as a robustness check for our theoretical findings.

The results of modification (i) are reported in Table
\ref{table modification i}. Those four observations we made from the baseline
setting (Table \ref{table true kronecker}) still hold when we relax the
independence assumption of the data. Modification (ii) are reported in Table
\ref{table modification ii}. When we switch on both temporal dependence and
heavy tails, all estimators - ours and the LW2017 estimator - are adversely
affected to a certain extent. In particular, in terms of RI-2, both the QMLE
and LW2017 estimators fare worse than the sample covariance matrix in small
dimensions. Overall, the identity weighted MD estimator is the best estimator
in modification (ii). That the MD estimator trumps the QMLE in heavy-tailed
data is intuitive because the MD estimator is derived not based on a
particular distributional assumption.

\begin{table}[ptb]
\centering
\begin{tabular}
[c]{ccccccccc}%
\toprule & $n$ & 4 & 8 & 16 & 32 & 64 & 128 & 256\\
\midrule \multirow{4}{*}{ RI-1} & QMLE & 0.219 & 0.514 & 0.712 & 0.821 &
0.887 & 0.928 & 0.951\\
& MD1 & 0.321 & 0.611 & 0.775 & 0.849 & 0.880 & 0.889 & 0.798\\
& MD2 & 0.310 & 0.611 & 0.770 & 0.844 & 0.877 & 0.890 & 0.796\\
& LW2017 & 0.025 & 0.032 & 0.049 & 0.065 & 0.093 & 0.117 & 0.155\\
\midrule \multirow{4}{*}{RI-2} & QMLE & 0.320 & 0.654 & 0.824 & 0.932 &
0.980 & 0.997 & 1.000\\
& MD1 & 0.338 & 0.639 & 0.737 & 0.691 & 0.593 & 0.517 & 0.822\\
& MD2 & 0.347 & 0.652 & 0.775 & 0.753 & 0.657 & 0.571 & 0.839\\
& LW2017 & 0.220 & 0.292 & 0.429 & 0.634 & 0.818 & 0.939 & 0.997\\
\midrule \multirow{4}{*}{VR} & QMLE & 0.998 & 0.988 & 0.975 & 0.927 & 0.860 &
0.728 & 0.383\\
& MD1 & 0.997 & 0.987 & 0.973 & 0.925 & 0.862 & 0.733 & 0.406\\
& MD2 & 0.997 & 0.987 & 0.973 & 0.924 & 0.862 & 0.732 & 0.406\\
& LW2017 & 1.000 & 0.999 & 0.997 & 0.990 & 0.970 & 0.907 & 0.568\\
\bottomrule &  &  &  &  &  &  &  &
\end{tabular}
\caption{{\protect\small Modification (i). QMLE, MD1, MD2 and LW2017 stand for
the quasi-maximum likelihood estimator of the Kronecker product model, the
minimum distance estimator (weighted by $W_{1}$) of the Kronecker product
model, the minimum distance estimator (weighted by $W_{2}$) of the Kronecker
product model, and the \cite{ledoitwolf2017}'s direct nonlinear shrinkage
estimator, respectively. RI-1 and RI-2 are RI criteria in terms of $\Sigma$
and $\Sigma^{-1}$, respectively. VR is the median of the ratio of the standard
deviation of the MVP using the estimator to that using the sample covariance
matrix out of sample. The sample size is fixed at $T=300$.}}%
\label{table modification i}%
\end{table}

\begin{table}[ptb]
\centering
\begin{tabular}
[c]{ccccccccc}%
\toprule & $n$ & 4 & 8 & 16 & 32 & 64 & 128 & 256\\
\midrule \multirow{4}{*}{ RI-1} & QMLE & 0.021 & 0.105 & 0.203 & 0.320 &
0.442 & 0.564 & 0.690\\
& MD1 & 0.071 & 0.211 & 0.348 & 0.492 & 0.621 & 0.719 & 0.605\\
& MD2 & 0.084 & 0.242 & 0.378 & 0.510 & 0.626 & 0.712 & 0.581\\
& LW2017 & $-0.023$ & $-0.001$ & 0.029 & 0.069 & 0.118 & 0.158 & 0.220\\
\midrule \multirow{4}{*}{RI-2} & QMLE & $-0.035$ & $-0.139$ & $-0.357$ &
$-0.831$ & $-0.202$ & 0.867 & 0.999\\
& MD1 & 0.006 & 0.035 & 0.111 & 0.385 & 0.896 & 0.636 & 0.829\\
& MD2 & $-0.006$ & $-0.009$ & 0.032 & 0.255 & 0.894 & 0.724 & 0.854\\
& LW2017 & $-0.103$ & $-0.206$ & $-0.428$ & $-0.847$ & $-0.279$ & 0.825 &
0.997\\
\midrule \multirow{4}{*}{VR} & QMLE & 0.996 & 0.982 & 0.956 & 0.923 & 0.842 &
0.708 & 0.379\\
& MD1 & 0.994 & 0.982 & 0.955 & 0.921 & 0.840 & 0.720 & 0.432\\
& MD2 & 0.994 & 0.982 & 0.955 & 0.920 & 0.840 & 0.719 & 0.429\\
& LW2017 & 1.000 & 0.999 & 0.995 & 0.989 & 0.968 & 0.906 & 0.577\\
\bottomrule &  &  &  &  &  &  &  &
\end{tabular}
\caption{{\protect\small Modification (ii). QMLE, MD1, MD2 and LW2017 stand
for the quasi-maximum likelihood estimator of the Kronecker product model, the
minimum distance estimator (weighted by $W_{1}$) of the Kronecker product
model, the minimum distance estimator (weighted by $W_{2}$) of the Kronecker
product model, and the \cite{ledoitwolf2017}'s direct nonlinear shrinkage
estimator, respectively. RI-1 and RI-2 are RI criteria in terms of $\Sigma$
and $\Sigma^{-1}$, respectively. VR is the median of the ratio of the standard
deviation of the MVP using the estimator to that using the sample covariance
matrix out of sample. The sample size is fixed at $T=300$.}}%
\label{table modification ii}%
\end{table}

\subsection{An Application}

We now consider estimation of the covariance matrix of $n^{\prime}=441$ stock
returns ($y_{t}$) in the S\&P 500 index. We have daily observations from
January 3, 2005 to November 6, 2015. The number of trading days is $T=2732$.
Since the underlying data might not have a multiplicative structure giving
rise to a Kronecker product - or if they do but we are unaware of it - a
Kronecker product model in this application is inherently misspecified. In
other words, we are exploiting Kronecker product models' approximating feature
to a general covariance matrix.

We have proved in Appendix \ref{sec A2} that in a given Kronecker product
model there exists a member which is closest to the true covariance matrix.
However, in order for this closest "distance" to be small, the chosen
Kronecker product model needs to be versatile enough to capture various data
patterns. In this sense, a parsimonious model, say, $441=3\times3 \times7
\times7$, is likely to be inferior to a less parsimonious model, say,
$441=21\times21$.

We add an $3\times1$ dimensional pseudo random vector $z_{t}$ which is $N(0,
I_{3})$ distributed and independent over $t$. The dimension of the final
system is $n=441+3=444$. Again we fit Kronecker product models to the
correlation matrix of the final system and recover an estimator for the
covariance matrix of the final system via left and right multiplication of the
estimated correlation matrix of the final system by $\hat{D}_{T}^{1/2}$. Last,
we extract the $441\times441$ upper-left block of the estimated covariance
matrix of the final system to form our Kronecker product estimator of the
covariance matrix of $y_{t}$. The dimension of the added pseudo random vector
should not be too large to avoid introducing additional noise, which could
adversely affect the performance of the Kronecker product models. We choose
the dimension of the final system to be 444 because its prime factorization is
$2\times2 \times3 \times37$, and we experiment with several Kronecker product
models. We did try other dimensions for the final system and the pattern
discussed below remains generally the same.

As we are considering less parsimonious models, the QMLE is computationally
intensive and found to perform worse than the MD estimator in preliminary
investigations, so we only use the MD estimator. The MD estimator is extremely
fast because its formula is just (\ref{eqn thetaTW closed form solution}). We
choose the weighting matrix to be the identity matrix.



\begin{table}[ptb]
\centering
\begin{tabular}
[c]{lcccccc}%
\toprule & MD & MD & MD & MD & \multirow{2}{*}{LW2004} &
\multirow{2}{*}{LW2017}\\
& $(2\times2 \times3 \times37)$ & $(4\times111)$ & $(3\times148)$ &
$(2\times222)$ &  & \\
\midrule & \multicolumn{6}{c}{\textit{original ordering of the data}}\\
\textit{Impr} & 0.265 & 0.379 & 0.394 & 0.440 & 0.459 & 0.518\\
\textit{Prop} & 0.811 & 0.896 & 0.915 & 0.953 & 0.991 & 0.981\\
\cmidrule(lr){2-7} & \multicolumn{6}{c}{\textit{a random permutation of the
data}}\\
\textit{Impr} & 0.259 & 0.364 & 0.404 & 0.431 & 0.459 & 0.518\\
\textit{Prop} & 0.811 & 0.887 & 0.915 & 0.943 & 0.991 & 0.981\\
\cmidrule(lr){2-7} & \multicolumn{6}{c}{\textit{a random permutation of the
data}}\\
\textit{Impr} & 0.263 & 0.351 & 0.366 & 0.436 & 0.459 & 0.518\\
\textit{Prop} & 0.811 & 0.887 & 0.906 & 0.943 & 0.991 & 0.981\\
\bottomrule &  &  &  &  &  &
\end{tabular}
\caption{{\protect\small MD, LW2004 and LW2017 stand for the (identity matrix
weighted) minimum distance estimators of the Kronecker product models
(factorisations given in parentheses), the \cite{ledoitwolf2004}'s linear
shrinkage estimator, and the \cite{ledoitwolf2017}'s direct nonlinear
shrinkage estimator, respectively. \textit{Impr} is the median of the 106
quantities calculated based on (\ref{eqn application evaluation formula}) and
\textit{Prop} is the proportion of the times (out of 106) that a competitor
MVP outperforms the sample covariance MVP (i.e., the proportion of the times
when (\ref{eqn application evaluation formula}) is positive). A random
permutation of the data means that the order of the 441 stocks is randomly
reshuffled.}}%
\label{table application}%
\end{table}

We follow the approach of \cite{fanliaomincheva2013} and estimate our model on
windows of size $504$ days (equal to two years' trading days) that are shifted
from the beginning to the end of the sample. The Kronecker product estimator
of the covariance matrix of $y_{t}$ is used to construct the minimum variance
portfolio (MVP) weights as in (\ref{eqn minimum variance portfolio weights}).
We also compute the MVP weights using the sample covariance matrix of $y_{t}$.
These two minimum variance portfolios are then evaluated using the next 21
days (equal to one month's trading days) out-of-sample. In particular, we
calculate
\begin{equation}
\label{eqn application evaluation formula}1-\frac{\text{sd(a competitor MVP)}%
}{\text{sd(sample covariance MVP)}},
\end{equation}
where sd$(\cdot)$ computes standard deviation. Then the estimation window of
$504$ days is shifted forward by 21 days. This procedure is repeated until we
reach the end of the sample; the total number of out-of-sample evaluations is
106. We consider two evaluation criteria of performance: \textit{Impr} and
\textit{Prop}. \textit{Impr} is the median of the 106 quantities calculated
based on (\ref{eqn application evaluation formula}). Note that \textit{Impr}
$\in(-\infty, 1]$: A negative value means a competitor MVP performs worse than
the sample covariance MVP while a positive value means otherwise.
\textit{Prop} is the proportion of the times (out of 106) that a competitor
MVP outperforms the sample covariance MVP (i.e., the proportion of the times
when (\ref{eqn application evaluation formula}) is positive).

For comparison, we consider \cite{ledoitwolf2004}'s linear shrinkage estimator
and \cite{ledoitwolf2017}'s direct nonlinear shrinkage estimator. The results
are reported in Table \ref{table application}. We first use the original
ordering of the data, i.e. alphabetical, and have the following observations.
First, all the Kronecker product MVPs outperform the sample covariance MVP.
Second, as we move from the most parsimonious factorisation ($444=2\times2
\times3 \times37$) to the least parsimonious factorisation ($444=2\times222$),
the performance of Kronecker product MVPs monotonically improves. This is
intuitive: Since we are using Kronecker product models to approximate a
general covariance matrix, a more flexible Kronecker product model could fit
the data better. There is no over-fitting at least in this application as we
consider out-of-sample evaluation. Third, the performance of the
$(2\times222)$ Kronecker product MVP is very close to that of a sophisticated
estimator like \cite{ledoitwolf2004}'s linear shrinkage estimator. This is
commendable because here a Kronecker product model is a misspecified
parametric model for a general covariance matrix while the linear shrinkage
estimator is in essence a data-driven, nonparametric estimator.

We next randomly reshuffle the order of the 441 stocks twice and use the same
Kronecker product models. In these two cases, the rows and columns of the true
covariance matrix also get reshuffled. We see that the performances of those
Kronecker product models are marginally affected by the reshuffle.
\cite{ledoitwolf2004}'s and \cite{ledoitwolf2017}'s shrinkage estimators are,
as expected, not affected by the ordering of the data.



\section{Conclusions}

\label{sec conclusion}

We have established the large sample properties of estimators of Kronecker
product models in the large dimensional case. In particular, we obtained norm
consistency and the large dimensional CLTs for the MD and one-step estimators.
Kronecker product models outperform the sample covariance matrix
theoretically, in Monte Carlo simulations, and in an application to portfolio
choice. When a Kronecker product model is correctly specified, Monte Carlo
simulations show that estimators of it can beat \cite{ledoitwolf2017}'s direct
non-linear shrinkage estimator. In the application, when one uses Kronecker
product models as an approximating device to a general covariance matrix, a
less parsimonious one can perform almost as good as \cite{ledoitwolf2004}'s
linear shrinkage estimator. It is possible to extend the framework in various
directions to improve performance.

A final motivation for the Kronecker product structure is that it can be used
as a component of a super model consisting of several components. For
instance, the idea of the decomposition in (\ref{eqn model}) could be applied
to components of \emph{dynamic} models such as multivariate GARCH, an area in
which Luc Bauwens has contributed significantly over the recent years, see
also his highly cited review paper \cite{bauwensetal2006}. For example, the
dynamic conditional correlation (DCC) model of \cite{engle2002}, or the BEKK
model of \cite{englekroner1995} both have intercept matrices that are required
to be positive definite and suffer from the curse of dimensionality, for which
model (\ref{eqn model}) would be helpful. Also, parameter matrices associated
with the dynamic terms in the model could be equipped with a Kronecker
product, similar to a suggestion by \cite{hoff2015} for vector autoregressions.

\appendix

\section{Appendix}

This appendix is organised as follows: Appendix \ref{sec A.1} further
discusses this matrix $E$ of the minimum distance estimator in Section
\ref{sec MD estimation}. Appendix \ref{sec A2} shows that a Kronecker product
model has a best approximation to a general covariance or correlation matrix.
Appendix \ref{secArateofconvergence} and \ref{sec A4} contain proofs of
Theorem \ref{thm main rate of convergence} and of Theorem
\ref{thm asymptotic normality}, respectively. Appendix \ref{sec oldappendixB}
contains auxiliary lemmas used in various places of this appendix.

\subsection{Matrix $E$}

\label{sec A.1}

The proof of the following theorem gives a concrete formula for the matrix $E$
of the minimum distance estimator.

\begin{thm}
\label{prop kronecker formula} Suppose that
\[
\Theta^{*}=\Theta^{*}_{1}\otimes\Theta^{*}_{2}\otimes\cdots\otimes\Theta
^{*}_{v},
\]
where $\Theta^{*}_{j}$ is $n_{j}\times n_{j}$ dimensional such that
$n=n_{1}\times n_{2}\times\cdots\times n_{v}$. Taking the logarithm on both
sides gives
\begin{align*}
\log\Theta^{*}  &  = \log\Theta^{*}_{1}\otimes I_{n_{2}}\otimes\cdots\otimes
I_{n_{v}}+ I_{n_{1}} \otimes\log\Theta^{*}_{2}\otimes I_{n_{3}}\otimes
\cdots\otimes I_{n_{v}}+\cdots+I_{n_{1}}\otimes I_{n_{2}}\otimes\cdots
\otimes\log\Theta^{*}_{v}.
\end{align*}
For identification we set the first diagonal entry of $\log\Theta^{*}_{j}$ to
be 0 for $j=1,\ldots,v-1$. In total there are
\[
s:=\sum_{j=1}^{v}\frac{n_{j}(n_{j}+1)}{2}-(v-1)
\]
unrestricted log parameters; let $\theta^{*}\in\mathbb{R}^{s}$ denote these.
Then there exists a $n(n+1)/2\times s$ full column rank matrix $E$ such that
\[
\vech (\log\Theta^{*})=E\theta^{*}.
\]

\end{thm}

\begin{proof}
Note that
\begin{align*}
\ve (\log \Theta^*) &= \ve(\log \Theta^*_1\otimes I_{n_2}\otimes \cdots \otimes I_{n_v})+\ve( I_{n_1} \otimes \log \Theta^*_2\otimes I_{n_3}\otimes \cdots \otimes I_{n_v})+\cdots\\
&\qquad +\ve (I_{n_1}\otimes I_{n_2}\otimes \cdots \otimes \log \Theta^*_v).
\end{align*}
If
\[\ve( I_{n_1} \otimes \log \Theta^*_i\otimes I_{n_3}\otimes \cdots \otimes I_{n_v})=E_i \vech (\log \Theta_i^*)\]
for some $n^2\times n_i(n_i+1)/2$ matrix $E_i$ for $i=1,\ldots,v$, then we have
\begin{align*}
\vech (\log \Theta^*)=D_n^+\ve (\log \Theta^*)  &  =D_n^+\left[
\begin{array}
[c]{cccc}%
E_{1} & E_{2} & \cdots & E_{v}%
\end{array}
\right]  \left[
\begin{array}
[c]{c}%
\vech ( \log \Theta^*_1)\\
\vech (\log \Theta^*_2)\\
\vdots\\
\vech (\log \Theta^*_v)
\end{array}
\right].
\end{align*}
For identification we set the first diagonal entry of $\log\Theta^{*}_{j}$ to
be 0 for $j=1,\ldots,v-1$. In total there are
\[
s:=\sum_{j=1}^{v}\frac{n_{j}(n_{j}+1)}{2}-(v-1)
\]
(identifiable) log parameters;
let $\theta^{*}\in \mathbb{R}^s$ denote these. Then there exists a $n(n+1)/2\times s$ full
column rank matrix $E$ such that
\[\vech (\log\Theta^{*})=E\theta^{*},\]
where
\[E:= D_n^+\left[ \begin{array}{ccccc}
E_{1,(-1)} & E_{2,(-1)} & \cdots & E_{v-1,(-1)} & E_v
\end{array}\right] \]
and $E_{i,(-1)}$ stands for matrix $E_i$ with its first column removed.
We now determine the formula for $E_i$. We first consider $\ve (\log \Theta^*_1\otimes I_{n_2}\otimes \cdots \otimes I_{n_v})$.
\begin{align*}
&\ve (\log \Theta^*_1\otimes I_{n_2}\otimes \cdots \otimes I_{n_v})=\ve (\log \Theta^*_1\otimes I_{n/n_1})=\del[1]{I_{n_1}\otimes K_{n/n_1,n_1}\otimes I_{n/n_1}}\del[1]{\ve (\log \Theta^*_1)\otimes \ve I_{n/n_1}} \\
&= \del[1]{I_{n_1}\otimes K_{n/n_1,n_1}\otimes I_{n/n_1}}\del[1]{I_{n_1^2}\ve (\log \Theta^*_1)\otimes \ve I_{n/n_1}\cdot 1}\\
&=\del[1]{I_{n_1}\otimes K_{n/n_1,n_1}\otimes I_{n/n_1}}\del[1]{I_{n_1^2}\otimes \ve I_{n/n_1}}\ve (\log \Theta^*_1)\\
&=\del[1]{I_{n_1}\otimes K_{n/n_1,n_1}\otimes I_{n/n_1}}\del[1]{I_{n_1^2}\otimes \ve I_{n/n_1}}D_{n_1}\vech (\log \Theta^*_1),
\end{align*}
where the second equality is due to \cite{magnusneudecker2007} Theorem 3.10 p55. Thus,
\[E_1:=\del[1]{I_{n_1}\otimes K_{n/n_1,n_1}\otimes I_{n/n_1}}\del[1]{I_{n_1^2}\otimes \ve I_{n/n_1}}D_{n_1}.\]
We now consider $\ve (I_{n_1}\otimes \cdots \otimes \log \Theta^*_i\otimes\cdots\otimes I_{n_v})$.
\begin{align*}
&\ve (I_{n_1}\otimes \cdots \otimes \log \Theta^*_i\otimes\cdots\otimes I_{n_v})=\ve \sbr[2]{K_{n_1\cdots n_{i-1},n/(n_1\cdots n_{i-1})}\del[1]{\log \Theta_i^*\otimes I_{n/n_i}}K_{n/(n_1\cdots n_{i-1}),n_1\cdots n_{i-1}}}\\
&= \sbr[1]{K_{n/(n_1\cdots n_{i-1}),n_1\cdots n_{i-1}}^{\intercal}\otimes K_{n_1\cdots n_{i-1},n/(n_1\cdots n_{i-1})}}\ve \del[1]{\log \Theta_i^*\otimes I_{n/n_i}}\\
&=\sbr[1]{K_{n_1\cdots n_{i-1},n/(n_1\cdots n_{i-1})}\otimes K_{n_1\cdots n_{i-1},n/(n_1\cdots n_{i-1})}} \del[1]{I_{n_i}\otimes K_{n/n_i,n_i}\otimes I_{n/n_i}}\del[1]{I_{n_i^2}\otimes \ve I_{n/n_i}}D_{n_i}\vech (\log \Theta^*_i),
\end{align*}
where the first equality is due to the identity $B\otimes A=K_{p,m}(A\otimes B) K_{m,p}$ for $A$ ($m\times m$) and $B$ ($p\times p$). Thus
\[E_i:= \sbr[1]{K_{n_1\cdots n_{i-1},n/(n_1\cdots n_{i-1})}\otimes K_{n_1\cdots n_{i-1},n/(n_1\cdots n_{i-1})}} \del[1]{I_{n_i}\otimes K_{n/n_i,n_i}\otimes I_{n/n_i}}\del[1]{I_{n_i^2}\otimes \ve I_{n/n_i}}D_{n_i},\]
for $i=2,\ldots,v$.
\end{proof}

\bigskip

\begin{lemma}
\label{prop decode E} Given that $n=n_{1}\times n_{2}\times\cdots\times n_{v}%
$, the $s\times s$ matrix $E^{\intercal}E$ takes the following form:

\begin{enumerate}
[(i)]

\item For $i=1,\ldots,s$, the $i$th diagonal entry of $E^{\intercal}E$ records
how many times the $i$th parameter in $\theta^{*}$ has appeared in
$\vech (\log\Theta^{*})$. The value depends on to which $\log\Theta^{*}_{j}$
the $i$th parameter in $\theta^{*}$, $\theta^{*}_{i}$, belongs to. For
instance, suppose $\theta^{*}_{i}$ is a parameter belonging to $\log\Theta
^{*}_{3}$, then
\[
(E^{\intercal}E)_{i,i}=n/n_{3}.
\]

\item For $i,k=1,\ldots,s$ ($i\neq k$), the $(i,k)$ entry of $E^{\intercal}E$
(or the $(k,i)$ entry of $E^{\intercal}E$ by symmetry) records how many times
the $i$th parameter in $\theta^{*}$, $\theta^{*}_{i}$, and $k$th parameter in
$\theta^{*}$, $\theta^{*}_{k}$, have appeared together (as summands) in an
entry of $\vech (\log\Theta^{*})$. The value depends on to which $\log
\Theta^{*}_{j}$ the $i$th parameter in $\theta^{*}$, $\theta^{*}_{i}$, and
$k$th parameter in $\theta^{*}$, $\theta^{*}_{k}$, belong to. For instance,
suppose $\theta^{*}_{i}$ is a parameter belonging to $\log\Theta^{*}_{3}$ and
$\theta^{*}_{k}$ is a parameter belonging to $\log\Theta^{*}_{5}$, then
\[
(E^{\intercal}E)_{i,k}=(E^{\intercal}E)_{k,i}=n/(n_{3}\cdot n_{5}).
\]
However, the formula in the preceding display is overridden for the following
two cases. If both $\theta^{*}_{i}$ and $\theta^{*}_{k}$ belong to the same
$\log\Theta^{*}_{j}$, then $(E^{\intercal}E)_{i,k}=(E^{\intercal}E)_{k,i}=0$.
Also note that when $\theta^{*}_{i}$ is an off-diagonal entry of some
$\log\Theta^{*}_{j}$, then
\[
(E^{\intercal}E)_{i,k}=(E^{\intercal}E)_{k,i}=0
\]
for any $k=1,\ldots,s$ ($i\neq k$).
\end{enumerate}
\end{lemma}

\begin{proof}
Proof by spotting the pattern.
\end{proof}

\bigskip

We here give a concrete example to illustrate Lemma \ref{prop decode E}.

\begin{example}
Suppose that $n_{1}=3, n_{2}=2, n_{3}=2$. We have
\begin{align*}
\log\Theta^{*}_{1}=\left(
\begin{array}
[c]{ccc}%
0 & a_{1,2} & a_{1,3}\\
a_{1,2} & a_{2,2} & a_{2,3}\\
a_{1,3} & a_{2,3} & a_{3,3}%
\end{array}
\right)  \qquad\log\Theta^{*}_{2}=\left(
\begin{array}
[c]{cc}%
0 & b_{1,2}\\
b_{1,2} & b_{2,2}%
\end{array}
\right)  \qquad\log\Theta^{*}_{3}=\left(
\begin{array}
[c]{cc}%
c_{1,1} & c_{1,2}\\
c_{1,2} & c_{2,2}%
\end{array}
\right)
\end{align*}
The leading diagonals of $\log\Theta^{*}_{1}$ and $\log\Theta^{*}_{2}$ are set
to zero for identification as explained before. Thus
\[
\theta^{*}=(a_{1,2},a_{1,3},a_{2,2},a_{2,3},a_{3,3},b_{1,2},b_{2,2}%
,c_{1,1},c_{1,2},c_{2,2})^{\intercal}.
\]
Then we can invoke Lemma \ref{prop decode E} to write down $E^{\intercal}E$
without even using Matlab to compute $E$; that is,
\begin{align*}
E^{\intercal}E=\left(
\begin{array}
[c]{cccccccccc}%
4 & 0 & 0 & 0 & 0 & 0 & 0 & 0 & 0 & 0\\
0 & 4 & 0 & 0 & 0 & 0 & 0 & 0 & 0 & 0\\
0 & 0 & 4 & 0 & 0 & 0 & 2 & 2 & 0 & 2\\
0 & 0 & 0 & 4 & 0 & 0 & 0 & 0 & 0 & 0\\
0 & 0 & 0 & 0 & 4 & 0 & 2 & 2 & 0 & 2\\
0 & 0 & 0 & 0 & 0 & 6 & 0 & 0 & 0 & 0\\
0 & 0 & 2 & 0 & 2 & 0 & 6 & 3 & 0 & 3\\
0 & 0 & 2 & 0 & 2 & 0 & 3 & 6 & 0 & 0\\
0 & 0 & 0 & 0 & 0 & 0 & 0 & 0 & 6 & 0\\
0 & 0 & 2 & 0 & 2 & 0 & 3 & 0 & 0 & 6
\end{array}
\right)
\end{align*}

\end{example}

\subsection{Best Approximation}

\label{sec A2}

In this section of the appendix, we show that for any given $n\times n$ real
symmetric, positive definite covariance matrix (or correlation matrix), there
is a uniquely defined member in the Kronecker product model that is closest to
the covariance matrix (or correlation matrix) in some sense in terms of the
\textit{log parameter} space, once a factorization $n=n_{1}\times\cdots\times
n_{v}$ is specified.

Let $\mathcal{M}_{n}$ denote the set of all $n\times n$ real symmetric
matrices. For any $n(n+1)/2\times n(n+1)/2$ known, deterministic, positive
definite matrix $W$, define a map
\[
\langle A,B\rangle_{W}:=(\vech A)^{\intercal}W\vech B\qquad A,B\in
\mathcal{M}_{n}.
\]
It is easy to show that $\langle\cdot,\cdot\rangle_{W}$ is an inner product.
Space $\mathcal{M}_{n}$ with inner product $\langle\cdot,\cdot\rangle_{W}$ can
be identified by $\mathbb{R}^{n(n+1)/2}$ with the usual Euclidean inner
product. Moreover, since, for finite $n$, $\mathbb{R}^{n(n+1)/2}$ with the
usual Euclidean inner product is a Hilbert space, so is $\mathcal{M}_{n}$. The
inner product $\langle\cdot,\cdot\rangle_{W}$ induces the following norm
\[
\|A\|_{W}:=\sqrt{\langle A, A\rangle_{W}}=\sqrt{(\vech A)^{\intercal}%
W\vech A}.
\]

Let $\mathcal{D}_{n}$ denote the set of matrices of the form
\begin{align*}
\Omega_{1}\otimes I_{n_{1}}\otimes\cdots\otimes I_{n_{v}} +I_{n_{1}}%
\otimes\Omega_{2}\otimes\cdots\otimes I_{n_{v}}+\cdots+I_{n_{1}}\otimes
\cdots\otimes\Omega_{v},
\end{align*}
where $\Omega_{j}$ are $n_{j}\times n_{j}$ real symmetric matrices for
$j=1,\ldots,v$.
Note that $\mathcal{D}_{n}$ is a (linear) subspace of $\mathcal{M}_{n}$ as,
for $\alpha,\beta\in\mathbb{R}$,
\begin{align*}
&  \alpha
\del [1]{\Omega_1\otimes I_{n_1}\otimes \cdots \otimes I_{n_v} +I_{n_1}\otimes \Omega_2\otimes \cdots \otimes I_{n_v}+\cdots+I_{n_1}\otimes \cdots\otimes \Omega_v}+\\
&  \beta
\del [1]{\Xi_1\otimes I_{n_1}\otimes \cdots \otimes I_{n_v} +I_{n_1}\otimes \Xi_2\otimes \cdots \otimes I_{n_v}+\cdots+I_{n_1}\otimes \cdots\otimes \Xi_v}\\
&  =(\alpha\Omega_{1}+\beta\Xi_{1})\otimes I_{n_{1}}\otimes\cdots\otimes
I_{n_{v}} +I_{n_{1}}\otimes(\alpha\Omega_{2}+\beta\Xi_{2})\otimes\cdots\otimes
I_{n_{v}}+\cdots+I_{n_{1}}\otimes\cdots\otimes(\alpha\Omega_{v}+\beta\Xi
_{v})\\
&  \in\mathcal{D}_{n}.
\end{align*}
For finite $n$, $\mathcal{D}_{n}$ is also closed.

Consider a real symmetric, positive definite covariance matrix $\Sigma$. We
have $\log\Sigma\in\mathcal{M}_{n}$. By the projection theorem of the Hilbert
space, there exists a unique matrix $L^{0}\in\mathcal{D}_{n}$ such that
\[
\Vert\log\Sigma-L^{0}\Vert_{W}=\min_{L\in\mathcal{D}_{n}}\Vert\log
\Sigma-L\Vert_{W}.
\]
(Note also that $\log\Sigma^{-1}=-\log\Sigma,$ so that $-L^{0}$ simultaneously
approximates the precision matrix $\Sigma^{-1}$ in the same norm.)

This says that any real symmetric, positive definite covariance matrix
$\Sigma$ has a closest approximating matrix $\Sigma^{0}$ in a sense that
\[
\|\log\Sigma-\log\Sigma^{0}\|_{W}=\min_{L\in\mathcal{D}_{n}}\|\log
\Sigma-L\|_{W},
\]
where $\Sigma^{0}:=\exp L^{0}$. Since $L^{0}\in\mathcal{D}_{n}$, we can write
\begin{align*}
L^{0}=L^{0}_{1}\otimes I_{n_{1}}\otimes\cdots\otimes I_{n_{v}} +I_{n_{1}%
}\otimes L^{0}_{2}\otimes\cdots\otimes I_{n_{v}}+\cdots+I_{n_{1}}\otimes
\cdots\otimes L^{0}_{v},
\end{align*}
where $L^{0}_{j}$ are $n_{j}\times n_{j}$ real symmetric matrices for
$j=1,\ldots,v$. Then
\begin{align*}
&  \Sigma^{0}=\exp L^{0} =\exp
\sbr[1]{L^0_1\otimes I_{n_1}\otimes \cdots \otimes I_{n_v} +I_{n_1}\otimes L^0_2\otimes \cdots \otimes I_{n_v}+\cdots+I_{n_1}\otimes \cdots\otimes L^0_v}\\
&  =\exp\sbr[1]{L^0_1\otimes I_{n_1}\otimes \cdots \otimes I_{n_v}}\times
\exp\sbr[1]{I_{n_1}\otimes L^0_2\otimes \cdots \otimes I_{n_v}}\times
\cdots\times\exp\sbr[1]{I_{n_1}\otimes \cdots\otimes L^0_v}\\
&  = \sbr[1]{\exp L^0_1 \otimes I_{n_1}\otimes \cdots \otimes I_{n_v}}\times
\sbr[1]{I_{n_1}\otimes \exp L^0_2\otimes \cdots \otimes I_{n_v}}\times
\cdots\times\sbr[1]{I_{n_1}\otimes \cdots\otimes \exp L^0_v}\\
&  = \exp L^{0}_{1} \otimes\exp L^{0}_{2}\otimes\cdots\otimes\exp L^{0}_{v} =:
\Sigma_{1}^{0}\otimes\cdots\otimes\Sigma_{v}^{0},
\end{align*}
where the third equality is due to Theorem 10.2 in \cite{higham2008} p235 and
the fact that $L^{0}_{1}\otimes I_{n_{1}}\otimes\cdots\otimes I_{n_{v}}$ and
$I_{n_{1}}\otimes L^{0}_{2}\otimes\cdots\otimes I_{n_{v}}$ commute, the fourth
equality is due to $f(A)\otimes I =f(A\otimes I)$ for any matrix function $f$
(e.g., Theorem 1.13 in \cite{higham2008} p10), the fifth equality is due to a
property of Kronecker products. Note that $\Sigma_{j} ^{0}$ is real symmetric,
positive definite $n_{j}\times n_{j}$ matrix for $j=1,\ldots,v$.

We thus see that $\Sigma^{0}$ is of the Kronecker product form, and that the
precision matrix $\Sigma^{-1}$ has a closest approximating matrix $(\Sigma
^{0})^{-1}$.
This reasoning provides a justification (i.e., interpretation) for using
$\Sigma^{0}$ even when the Kronecker product model is misspecified for the
covariance matrix. The same reasoning applies to any real symmetric, positive
definite correlation matrix $\Theta$.

\cite{vanloan2000} and \cite{pitsianis1997} also considered this nearest
approximation involving one Kronecker product only and in the original
parameter space (not in the log parameter space). In that simplified problem,
they showed that the optimisation problem could be solved by the singular
value decomposition.

\subsection{The Proof of Theorem \ref{thm main rate of convergence}}

\label{secArateofconvergence}

In this subsection, we give a proof for Theorem
\ref{thm main rate of convergence}. We will first give some preliminary lemmas
leading to the proof of this theorem.

The following lemma characterises the relationship between an exponential-type
moment assumption and an exponential tail probability.

\begin{lemma}
\label{lemmaexponentialtail} Suppose that a random variable $X$ satisfies the
exponential-type tail condition, i.e., there exist absolute constants
$K_{1}>1, K_{2}>0, r_{1}>0$ such that
\[
\mathbb{E}\sbr[2]{\exp\del [1]{K_2 |X|^{r_1}}}\leq K_{1}.
\]

\begin{enumerate}
[(i)]

\item Then for every $\epsilon\geq0$, there exists an absolute constant
$b_{1}>0$ such that
\[
\mathbb{P}(|X|\geq\epsilon)\leq\exp\sbr[1]{1-(\epsilon/b_1)^{r_1}}.
\]

\item We have $\mathbb{E}|X|<\infty$.

\item Part (i) implies that for every $\epsilon\geq0$, there exists an
absolute constant $c_{1}>0$ such that
\[
\mathbb{P}(|X-\mathbb{E}X|\geq\epsilon)\leq\exp
\sbr[1]{1-(\epsilon/c_1)^{r_1}}.
\]

\item Suppose that another random variable $Y$ satisfies $\mathbb{E}%
\sbr[2]{\exp\del [1]{K_2^* |Y|^{r_1^*}}}\leq K_{1}^{*}$ for some absolute
constants $K_{1}^{*}>1, K_{2}^{*}>0, r_{1}^{*}>0$. Then for every
$\epsilon\geq0$, there exists an absolute constant $b_{2}>0$ such that
\[
\mathbb{P}(|XY|\geq\epsilon)\leq\exp\sbr[1]{1-(\epsilon/b_2)^{r_2}},
\]
where $r_{2}\in\left(  0,\frac{r_{1}r_{1}^{*}}{r_{1}+r_{1}^{*}}\right]  $.
\end{enumerate}
\end{lemma}

\begin{proof}
For part (i), choose $C:=\log K_1\vee 1$ and $b_1:=(C/K_2)^{1/r_1}$. If $\epsilon>b_1$, we have
\begin{align*}
& \mathbb{P}(|X|\geq\epsilon)\leq \frac{\mathbb{E}\sbr[1]{\exp (K_2 |X|^{r_1})}}{\exp(K_2\epsilon^{r_1})}\leq K_1 e^{-K_2\epsilon^{r_1}}=e^{\log K_1-K_2\epsilon^{r_1}}=e^{\log K_1-C(\epsilon/b_1)^{r_1}}\\
& \leq e^{C[1-(\epsilon/b_1)^{r_1}]}\leq e^{1-(\epsilon/b_1)^{r_1}}
\end{align*}
where the first inequality is due to a variant of Markov's inequality. If $\epsilon\leq b_1$, we have
\begin{align*}
\mathbb{P}(|X|\geq\epsilon)\leq 1\leq e^{1-(\epsilon/b_1)^{r_1}}.
\end{align*}
For part (ii),
\begin{align*}
&\mathbb{E}|X|=\int_{0}^{\infty}\mathbb{P}(|X|\geq t)dt\leq \int_{0}^{\infty}e^{1-(t/b_1)^{r_1}}dt=e\int_{0}^{\infty}e^{-(t/b_1)^{r_1}}dt=\frac{eb_1}{r_1}\int_{0}^{\infty}y^{\frac{1}{r_1}-1}e^{-y}dy\\
&=\frac{eb_1}{r_1}\Gamma(r_1^{-1})< \infty,
\end{align*}
where the first inequality is due to part (i), the third equality is due to change of variable $y=(t/b_1)^{r_1}$, and the last equality is due to recognition of $\int_{0}^{\infty}[\Gamma(r_1^{-1})]^{-1}y^{\frac{1}{r_1}-1}e^{-y}dy=1$ using Gamma distribution.
For part (iii),
\begin{align*}
&\mathbb{P}(|X-\mathbb{E}X|\geq\epsilon)\leq \mathbb{P}(|X|\geq \epsilon -\mathbb{E}|X|)=\mathbb{P}(|X|\geq \epsilon -\mathbb{E}|X|\wedge \epsilon)\leq \exp\sbr[3]{1-\frac{(\epsilon-\mathbb{E}|X|\wedge \epsilon)^{r_1}}{b_1^{r_1}}}
\end{align*}
where the second inequality is due to part (i). First consider the case $0<r_1<1$.
\begin{align*}
&\exp\sbr[3]{1-\frac{(\epsilon-\mathbb{E}|X|\wedge \epsilon)^{r_1}}{b_1^{r_1}}}\leq \exp\sbr[3]{1-\frac{\epsilon^{r_1}-(\mathbb{E}|X|\wedge \epsilon)^{r_1}}{b_1^{r_1}}}=\exp\sbr[3]{1-\frac{\epsilon^{r_1}}{b_1^{r_1}}+\frac{(\mathbb{E}|X|\wedge \epsilon)^{r_1}}{b_1^{r_1}}}\\
&\leq \exp\sbr[3]{1-\frac{\epsilon^{r_1}}{b_1^{r_1}}+\frac{(\mathbb{E}|X|)^{r_1}}{b_1^{r_1}}}\leq \exp\sbr[3]{C-\frac{\epsilon^{r_1}}{b_1^{r_1}}}= \exp\sbr[3]{C\del[3]{1-\frac{\epsilon^{r_1}}{(C^{\frac{1}{r_1}}b_1)^{r_1}}}}=: \exp\sbr[3]{C\del[3]{1-\frac{\epsilon^{r_1}}{c_1^{r_1}}}}
\end{align*}
where the first inequality is due to subadditivity of the concave function: $(x+y)^{r_1}-x^{r_1}\leq y^{r_1}$ for $x,y\geq 0$. If $\epsilon>c_1$, we have, via recognising $C>1$,
\[\mathbb{P}(|X-\mathbb{E}X|\geq\epsilon)\leq \exp\sbr[3]{C\del[3]{1-\frac{\epsilon^{r_1}}{c_1^{r_1}}}}\leq \exp\sbr[3]{1-\frac{\epsilon^{r_1}}{c_1^{r_1}}}.\]
If $\epsilon\leq c_1$, we have
\[\mathbb{P}(|X-\mathbb{E}X|\geq\epsilon)\leq 1\leq \exp\sbr[3]{1-\frac{\epsilon^{r_1}}{c_1^{r_1}}}. \]
We now consider the case $r_1\geq 1$. The proof is almost the same: Instead of relying on subadditivity of the concave function, we rely on Loeve's $c_r$ inequality: $|x+y|^{r_1}\leq 2^{r_1-1}(|x|^{r_1}+|y|^{r_1})$ for $r_1\geq 1$ to get $2^{1-r_1}\epsilon^{r_1}-(\mathbb{E}|X|\wedge \epsilon)^{r_1}\leq (\epsilon-\mathbb{E}|X|\wedge \epsilon)^{r_1}$. $c_1$ is now defined as $C^{\frac{1}{r_1}}b_12^{\frac{r_1-1}{r_1}}$.
For part (iv), an original proof could be found in \cite{fanliaomincheva2011} p3338. Invoke part (i), $\mathbb{P}(|Y|\geq\epsilon)\leq \exp \sbr[1]{1-(\epsilon/b_1^*)^{r_1^*}}$. We have, for any $\epsilon\geq 0$, $M:= \del[1]{\frac{\epsilon (b_1^*)^{(r_1^*/r_1)}}{b_1}}^{\frac{r_1}{r_1+r_1^*}}$, $b:=b_1b_1^*$, $r:=\frac{r_1r_1^*}{r_1+r_1^*}$,
\begin{align*}
&\mathbb{P}(|XY|\geq\epsilon)\leq \mathbb{P}(|X|\geq\epsilon/M)+\mathbb{P}(|Y|\geq M)\leq \exp\sbr[3]{1-\del[3]{\frac{\epsilon/M}{b_1}}^{r_1}}+\exp\sbr[3]{1-\del[3]{\frac{M}{b_1^*}}^{r_1^*}}\\
&= 2\exp \sbr[1]{1-(\epsilon/b)^{r}}.
\end{align*}
Pick an $r_2\in (0,r]$ and $b_2> (1+\log 2)^{1/r}b$. We consider the case $\epsilon\leq b_2$ first.
\[\mathbb{P}(|XY|\geq\epsilon)\leq 1\leq \exp \sbr[1]{1-(\epsilon/b_2)^{r_2}}.\]
We now consider the case $\epsilon> b_2$. Define a function $F(\epsilon):=(\epsilon/b)^r-(\epsilon/b_2)^{r_2}$. Using the definition of $b_2$, we have $F(b_2)>\log 2$. It is also not difficult to show that $F'(\epsilon)>0$ when $\epsilon> b_2$. Thus we have $F(\epsilon)>F(b_2)>\log 2$ when $\epsilon> b_2$. Thus,
\begin{align*}
&\mathbb{P}(|XY|\geq\epsilon)\leq 2\exp \sbr[1]{1-(\epsilon/b)^{r}}=\exp \sbr[1]{\log 2+1-(\epsilon/b)^{r}}\leq \exp [(\epsilon/b)^r-(\epsilon/b_2)^{r_2}+1-(\epsilon/b)^{r}]\\
&=\exp[1-(\epsilon/b_2)^{r_2}].
\end{align*}
\end{proof}

\bigskip


This following lemma gives a rate of convergence in terms of spectral norm for
the sample covariance matrix.

\begin{lemma}
\label{lemma spectral norm perturbation covariance matrix} Assume
$n,T\to\infty$ simultaneously and $n/T\leq1$. Suppose Assumptions
\ref{assu subgaussian vector}(i) and \ref{assu mixing} hold with
$1/r_{1}+1/r_{2}>1$. Then
\[
\| \hat{\Sigma}_{T}- \Sigma\|_{\ell_{2}}=O_{p}\del [3]{\sqrt{\frac{n}{T}}}.
\]

\end{lemma}

\begin{proof}
Write $\hat{\Sigma}_T=\frac{1}{T}\sum_{t=1}^{T}y_ty_t^{\intercal}-\bar{y}\bar{y}^{\intercal}$. We have
\begin{equation}
\label{eqn spectral norm Sigma perturbation}
\| \hat{\Sigma}_T- \Sigma\|_{\ell_2}\leq \enVert[3]{\frac{1}{T}\sum_{t=1}^{T}y_ty_t^{\intercal}-\mathbb{E}y_ty_t^{\intercal}}_{\ell_2}+\|\bar{y}\bar{y}^{\intercal}-\mu\mu^{\intercal}\|_{\ell_2}.
\end{equation}
We consider the first term on the right hand side of (\ref{eqn spectral norm Sigma perturbation}) first. Invoke Lemma \ref{lemma symmetric matrix spectral norm} in Appendix \ref{sec oldappendixB} with $\varepsilon=1/4$:
\begin{align*}
\enVert[3]{\frac{1}{T}\sum_{t=1}^{T}y_ty_t^{\intercal}-\mathbb{E}y_ty_t^{\intercal}}_{\ell_{2}}&\leq 2\max_{a\in \mathcal{N}_{1/4}}\envert[3]{a^{\intercal}\del[3]{\frac{1}{T}\sum_{t=1}^{T}y_ty_t^{\intercal}-\mathbb{E}y_ty_t^{\intercal}}a}=:2\max_{a\in \mathcal{N}_{1/4}}\envert[3]{ \frac{1}{T}\sum_{t=1}^{T}(z_{a,t}^2-\mathbb{E}z_{a,t}^2)},
\end{align*}
where $z_{a,t}:=y_t^{\intercal}a$. First, given Assumption \ref{assu subgaussian vector}(i), invoke Lemma \ref{lemmaexponentialtail}(i) and (iv): For every $\epsilon\geq 0$, there exists an absolute constant $b_2>0$ such that
\[\mathbb{P}(|z_{a,t}^2|\geq \epsilon)\leq \exp \sbr[1]{1-(\epsilon/b_2)^{r_1/2}}.\]
Next, invoke Lemma \ref{lemmaexponentialtail}(iii): For every $\epsilon\geq 0$, there exists an absolute constant $c_2>0$ such that
\[\mathbb{P}(|z_{a,t}^2-\mathbb{E}z_{a,t}^2|\geq \epsilon)\leq \exp \sbr[1]{1-(\epsilon/c_2)^{r_1/2}}.\]
Given Assumption \ref{assu mixing} and the fact that mixing properties are hereditary in the sense that for any measurable function $m(\cdot)$, the process $\{m(y_t)\}$ possesses the mixing property of $\{y_t\}$ (\cite{fanyao2003} p69), $z_{a,t}^2-\mathbb{E}z_{a,t}^2$ is strong mixing with the same coefficient: $\alpha(h)\leq \exp\del [1]{-K_3h^{r_2}}$. Define $r$ by $1/r :=2/r_1+1/r_2$. Using the fact that $2/r_1+1/r_2>1$, we can invoke a version of Bernstein's inequality for strong mixing time series (Theorem \ref{thmbernsteininequality} in Appendix \ref{sec oldappendixB}), followed by Lemma \ref{lemmabernsteinrate} in Appendix \ref{sec oldappendixB}:
\[2 \max_{a\in \mathcal{N}_{1/4}}\envert[3]{ \frac{1}{T}\sum_{t=1}^{T}(z_{a,t}^2-\mathbb{E}z_{a,t}^2)}=O_p\del[3]{ \sqrt{\frac{\log |\mathcal{N}_{1/4}|}{T}}}.\]
Invoking Lemma \ref{lemma vol ball} in Appendix \ref{sec oldappendixB}, we have $|\mathcal{N}_{1/4}|\leq 9^n$. Thus we have
\begin{align*}
\enVert[3]{\frac{1}{T}\sum_{t=1}^{T}y_ty_t^{\intercal}-\mathbb{E}y_ty_t^{\intercal}}_{\ell_{2}}&\leq2 \max_{a\in \mathcal{N}_{1/4}}\envert[3]{ \frac{1}{T}\sum_{t=1}^{T}(z_{a,t}^2-\mathbb{E}z_{a,t}^2)}=O_p\del [3]{ \sqrt{\frac{n}{T}}}.
\end{align*}
We now consider the second term on the right hand side of (\ref{eqn spectral norm Sigma perturbation}).
\begin{align*}
&\|\bar{y}\bar{y}^{\intercal}-\mu\mu^{\intercal}\|_{\ell_2}\leq
2 \max_{a\in \mathcal{N}_{1/4}}\envert[3]{a^{\intercal}\del [3]{\bar{y}\bar{y}^{\intercal}-\mu\bar{y}^{\intercal}+\mu\bar{y}^{\intercal}-\mu\mu^{\intercal}}a} =2 \max_{a\in \mathcal{N}_{1/4}}\envert[3]{a^{\intercal}\del [3]{(\bar{y}-\mu)\bar{y}^{\intercal}+\mu(\bar{y}-\mu)^{\intercal}}a}\\
&\leq 2\max_{a\in \mathcal{N}_{1/4}}\envert[1]{a^{\intercal}(\bar{y}-\mu)\bar{y}^{\intercal}a}+2\max_{a\in \mathcal{N}_{1/4}}\envert[1]{a^{\intercal}\mu (\bar{y}-\mu)^{\intercal}a}\\
&\leq 2\max_{a\in \mathcal{N}_{1/4}}\envert[1]{a^{\intercal}(\bar{y}-\mu)}\max_{a\in \mathcal{N}_{1/4}}\envert[1]{\bar{y}^{\intercal}a}+2\max_{a\in \mathcal{N}_{1/4}}\envert[1]{a^{\intercal}\mu}\max_{a\in \mathcal{N}_{1/4}}\envert[1]{ (\bar{y}-\mu)^{\intercal}a},
\end{align*}
where the first inequality is due to Lemma \ref{lemma symmetric matrix spectral norm} in Appendix \ref{sec oldappendixB} with $\varepsilon=1/4$. We consider $\max_{a\in \mathcal{N}_{1/4}}\envert[1]{ (\bar{y}-\mu)^{\intercal}a}$ first.
\begin{align*}
\max_{a\in \mathcal{N}_{1/4}}\envert[1]{(\bar{y}-\mu)^{\intercal}a}=\max_{a\in \mathcal{N}_{1/4}}\envert[3]{\frac{1}{T}\sum_{t=1}^{T}(y_t^{\intercal}a-\mathbb{E}[y_t^{\intercal}a])}=:\max_{a\in \mathcal{N}_{1/4}}\envert[3]{\frac{1}{T}\sum_{t=1}^{T}(z_{a,t}-\mathbb{E}z_{a,t})}.
\end{align*}
Recycling the proof for $\max_{a\in \mathcal{N}_{1/4}}\envert[1]{ \frac{1}{T}\sum_{t=1}^{T}(z_{a,t}^2-\mathbb{E}z_{a,t}^2)}=O_p\del [1]{ \sqrt{\frac{n}{T}}}$ but with $1/r :=1/r_1+1/r_2>1$ this time, we have
\begin{equation}
\max_{a\in \mathcal{N}_{1/4}}\envert[1]{(\bar{y}-\mu)^{\intercal}a}=\max_{a\in \mathcal{N}_{1/4}}\envert[3]{\frac{1}{T}\sum_{t=1}^{T}(z_{a,t}-\mathbb{E}z_{a,t})}=O_p\del[3]{ \sqrt{\frac{\log |\mathcal{N}_{1/4}|}{T}}}=O_p\del [3]{ \sqrt{\frac{n}{T}}}\label{ali amu.1}.
\end{equation}
%
Now let's consider $\max_{a\in \mathcal{N}_{1/4}}\envert[1]{a^{\intercal}\mu}$.
\begin{align}
& \max_{a\in \mathcal{N}_{1/4}}\envert[1]{a^{\intercal}\mu}:=\max_{a\in \mathcal{N}_{1/4}}\envert[1]{\mathbb{E}a^{\intercal}y_t}=\max_{a\in \mathcal{N}_{1/4}}\envert[1]{\mathbb{E}z_{a,t}}\leq \max_{a\in \mathcal{N}_{1/4}}\mathbb{E}|z_{a,t}|=O(1) ,\label{ali amu.2}
\end{align}
where the last equality is due to Lemma \ref{lemmaexponentialtail}(ii).
%
Next we consider $\max_{a\in \mathcal{N}_{1/4}}\envert[1]{a^{\intercal}\bar{y}}$.
\begin{align}
\max_{a\in \mathcal{N}_{1/4}}\envert[1]{a^{\intercal}\bar{y}}&=\max_{a\in \mathcal{N}_{1/4}}\envert[1]{a^{\intercal}(\bar{y}-\mu+\mu)}\leq \max_{a\in \mathcal{N}_{1/4}}\envert[1]{a^{\intercal}(\bar{y}-\mu)}+\max_{a\in \mathcal{N}_{1/4}}\envert[1]{a^{\intercal}\mu}=O_p\del [3]{ \sqrt{\frac{n}{T}}}+O(1)\notag\\
&=O_p(1),\label{ali amu.3}
\end{align}
where the last equality is due to $n\leq T$. Combining (\ref{ali amu.1}), (\ref{ali amu.2}) and (\ref{ali amu.3}), we have
\[\|\bar{y}\bar{y}^{\intercal}-\mu\mu^{\intercal}\|_{\ell_2}=O_p\del [3]{ \sqrt{\frac{n}{T}}}.\]
\end{proof}

\bigskip

The following lemma gives a rate of convergence in terms of spectral norm for
various quantities involving variances of $y_{t}$. The rate $\sqrt{n/T}$ is
suboptimal, but there is no need improving it further as these quantities will
not be the dominant terms in the proof of Theorem
\ref{thm main rate of convergence}.

\begin{lemma}
\label{lemmaD} Suppose Assumptions \ref{assu subgaussian vector}(i),
\ref{assu mixing}, \ref{assu n indexed by T}(i) and
\ref{assu about D and Dhat}(i) hold with $1/r_{1}+1/r_{2}>1$. Then

\begin{enumerate}
[(i)]

\item
\[
\|\hat{D}_{T}-D\|_{\ell_{2}}=O_{p}\del [3]{\sqrt{\frac{n}{T}}}.
\]

\item The minimum eigenvalue of $D$ is bounded away from zero by an absolute
positive constant (i.e., $\|D^{-1}\|_{\ell_{2}}=O(1)$), so is the minimum
eigenvalue of $D^{1/2}$ (i.e., $\|D^{-1/2}\|_{\ell_{2}}=O(1)$).

\item
\[
\|\hat{D}_{T}^{1/2}-D^{1/2}\|_{\ell_{2}}=O_{p}\del [3]{\sqrt{\frac{n}{T}}}.
\]

\item
\[
\|\hat{D}_{T}^{-1/2}-D^{-1/2}\|_{\ell_{2}}=O_{p}\del [3]{\sqrt{\frac{n}{T}}}.
\]

\item
\[
\|\hat{D}_{T}^{-1/2}\|_{\ell_{2}}=O_{p}(1).
\]

\item The maximum eigenvalue of $\Sigma$ is bounded from the above by an
absolute constant (i.e., $\|\Sigma\|_{\ell_{2}}=O(1)$). The maximum eigenvalue
of $D$ is bounded from the above by an absolute constant (i.e., $\|D\|_{\ell
_{2}}=O(1)$).

\item
\[
\|\hat{D}_{T}^{-1/2}\otimes\hat{D}_{T}^{-1/2} -D^{-1/2}\otimes D^{-1/2}%
\|_{\ell_{2}}=O_{p}\del [3]{\sqrt{\frac{n}{T}}}.
\]

\end{enumerate}
\end{lemma}

\begin{proof}
Define $\sigma_i^2:=\mathbb{E}(y_{t,i}-\sigma_i)^2$ and $\hat{\sigma}_i^2:=\frac{1}{T}\sum_{t=1}^{T}(y_{t,i}-\bar{y}_i)^2$, where the subscript $i$ denotes the $i$th component of the corresponding vector. For part (i),
\begin{align*}
\|\hat{D}_T-D\|_{\ell_2}&=\max_{1\leq i\leq n}|\hat{\sigma}_i^2-\sigma_i^2|=\max_{1\leq i\leq n}|e_i^{\intercal}(\hat{\Sigma}_T-\Sigma)e_i|\leq \max_{\|a\|_2=1}|a^{\intercal}(\hat{\Sigma}_T-\Sigma)a|=\| \hat{\Sigma}_T- \Sigma\|_{\ell_2},
\end{align*}
where $e_i$ denotes a unit vector whose $i$th component is 1. Now invoke Lemma \ref{lemma spectral norm perturbation covariance matrix} to get the result.
For part (ii),
\begin{align*}
\text{mineval}(D)&=\min_{1\leq i\leq n}\sigma_i^2=\min_{1\leq i\leq n} e_i^{\intercal}\Sigma e_i\geq \min_{\|a\|_2=1}a^{\intercal}\Sigma a=\text{mineval}(\Sigma)>0
\end{align*}
where the last inequality is due to Assumption \ref{assu about D and Dhat}(i). The statement about the minimum eigenvalue of $D^{1/2}$ is also true. For part (iii), invoking Lemma \ref{lemma square root difference spectral norm} in Appendix \ref{sec oldappendixB} gives
\begin{align*}
\|\hat{D}_T^{1/2}-D^{1/2}\|_{\ell_2}\leq\frac{\|\hat{D}_T-D\|_{\ell_{2}}}{\text{mineval}(\hat{D}_T^{1/2})+\text{mineval}(D^{1/2})}=O_p(1)\|\hat{D}_T-D\|_{\ell_{2}}=O_p\del [3]{\sqrt{\frac{n}{T}}},
\end{align*}
where the first and second equalities are due to parts (ii) and (i), respectively.
Part (iv) follows from Lemma \ref{lemma saikkonen lemma} in Appendix \ref{sec oldappendixB} via parts (ii) and (iii).
For part (v),
\begin{align*}
\|\hat{D}_T^{-1/2}\|_{\ell_2}&=\|\hat{D}_T^{-1/2}-D^{-1/2}+D^{-1/2}\|_{\ell_2}\leq \|\hat{D}_T^{-1/2}-D^{-1/2}\|_{\ell_2}+\|D^{-1/2}\|_{\ell_2}\\
&=O_p\del [3]{\sqrt{\frac{n}{T}}}+O(1)=O_p(1),
\end{align*}
where the second equality is due to parts (iv) and (ii). For part (vi), we have
\begin{align*}
\|\Sigma\|_{\ell_2}=\max_{\|a\|_2=1}\envert[1]{a^{\intercal}\del[1]{\mathbb{E}[y_ty_t^{\intercal}]-\mu\mu^{\intercal}}a}\leq \max_{\|a\|_2=1}\mathbb{E}z_{a,t}^2+\max_{\|a\|_2=1}(\mathbb{E}z_{a,t})^2\leq 2\max_{\|a\|_2=1}\mathbb{E}z_{a,t}^2.
\end{align*}
We have shown that in the proof of Lemma \ref{lemma spectral norm perturbation covariance matrix} that $z_{a,t}^2$ has an exponential tail for any $\|a\|_2=1$. This says that $\mathbb{E}z_{a,t}^2$ is bounded for any $\|a\|_2=1$ via Lemma \ref{lemmaexponentialtail}(ii), so the result follows.
Next we consider
\begin{align*}
\|D\|_{\ell_2}=\max_{1\leq i\leq n}\sigma^2_i=\max_{1\leq i\leq n}e_i^{\intercal}\Sigma e_i\leq \max_{\|a\|_2=1}a^{\intercal}\Sigma a=\text{maxeval}(\Sigma)<\infty.
\end{align*}
For part (vii),
\begin{align*}
&\|\hat{D}_T^{-1/2}\otimes \hat{D}_T^{-1/2} -D^{-1/2}\otimes D^{-1/2}\|_{\ell_2}\\
&= \|\hat{D}_T^{-1/2}\otimes \hat{D}_T^{-1/2} -\hat{D}_T^{-1/2}\otimes D^{-1/2}+ \hat{D}_T^{-1/2}\otimes D^{-1/2}-D^{-1/2}\otimes D^{-1/2}\|_{\ell_2}\\
&\leq  \|\hat{D}_T^{-1/2}\otimes (\hat{D}_T^{-1/2} - D^{-1/2})\|_{\ell_2}+ \|(\hat{D}_T^{-1/2}-D^{-1/2})\otimes D^{-1/2}\|_{\ell_2}\\
&= \del [1]{\|\hat{D}_T^{-1/2}\|_{\ell_2}+ \|D^{-1/2}\|_{\ell_2}}\|\hat{D}_T^{-1/2} - D^{-1/2}\|_{\ell_2}=O_p\del [3]{\sqrt{\frac{n}{T}}},
\end{align*}
where the second equality is due to Lemma \ref{lemma l2norm of kronecker product} in Appendix \ref{sec oldappendixB}, and the last equality is due to parts (ii), (v) and (iv).
\end{proof}

\bigskip

To prove part (ii) of Theorem \ref{thm main rate of convergence}, we shall use
Lemma 4.1 of \cite{Gil2012}. That lemma will further simplify when we consider
real symmetric, positive definite matrices. For the ease of reference, we
state this simplified version of Lemma 4.1 of \cite{Gil2012} here.

\begin{lemma}
[Simplified from Lemma 4.1 of \cite{Gil2012}]\label{lemma Gil2012 4.1} For
$n\times n$ real symmetric, positive definite matrices $A,B$, if $\| A-
B\|_{\ell_{2}}< a$ for some absolute constant $a>1$, then
\[
\|\log A-\log B\|_{\ell_{2}}\leq c\| A- B\|_{\ell_{2}},
\]
for some positive absolute constant $c$.
\end{lemma}

\begin{proof}
First note that for any real symmetric, positive definite matrix $Q$, $p(Q,x)=x$ for any $x>0$ in Lemma 4.1 of \cite{Gil2012}. Since $A$ is real symmetric and positive definite, all its eigenvalues lie in the region $|\arg (z-a)|\leq \pi/2$. Then according to \cite{Gil2012} p11, we have for any $t\geq 0$ not coinciding with eigenvalues of $A$
\begin{align*}
\rho(A,-t)&\geq (a+t) \sin (\pi/2)=a+t\\
\rho(A,-t)-\delta& \geq a+t-\delta,
\end{align*}
where
\[\delta :=\left\lbrace \begin{array}{cc}
\| A- B\|_{\ell_2}^{1/n} & \text{if }\| A- B\|_{\ell_2}\leq 1\\
\| A- B\|_{\ell_2} & \text{if } \| A- B\|_{\ell_2}\geq 1
\end{array} \right. \]
and $\rho(A,-t)$ is defined in \cite{Gil2012} p3. Then the condition of Lemma \ref{lemma Gil2012 4.1} allows one to invoke Lemma 4.1 of \cite{Gil2012} as
\[\rho(A,-t)\geq a+t\geq a >\delta.\]
Lemma 4.1 of \cite{Gil2012} says
\begin{align*}
& \|\log A-\log B\|_{\ell_2}\leq \| A- B\|_{\ell_2}\int_{0}^{\infty}p\del [3]{A,\frac{1}{\rho(A,-t)}}p\del [3]{B,\frac{1}{\rho(A,-t)-\delta}}dt\\
&=\| A- B\|_{\ell_2}\int_{0}^{\infty}\frac{1}{\rho(A,-t)}\frac{1}{\rho(A,-t)-\delta}dt\leq \| A- B\|_{\ell_2}\int_{0}^{\infty}\frac{1}{(a+t)(a+t-\delta)}dt\\
&\leq \| A- B\|_{\ell_2}\int_{0}^{\infty}\frac{1}{(a+t-\delta)^2}dt=\| A- B\|_{\ell_2}\frac{1}{a-\delta}=:c\| A- B\|_{\ell_2}.
\end{align*}
\end{proof}

\bigskip

We are now ready to give a proof for Theorem
\ref{thm main rate of convergence}

\begin{proof}[Proof of Theorem \ref{thm main rate of convergence}]
For part (i), recall that
\[ \hat{\Theta}_T=\hat{D}_T^{-1/2}\hat{\Sigma}_T\hat{D}_T^{-1/2},\qquad \Theta=D^{-1/2}\Sigma D^{-1/2}. \]
Then we have
\begin{align}
&\| \hat{\Theta}_T- \Theta\|_{\ell_2}=\|\hat{D}_T^{-1/2}\hat{\Sigma}_T\hat{D}_T^{-1/2}-\hat{D}_T^{-1/2}\Sigma\hat{D}_T^{-1/2}+\hat{D}_T^{-1/2}\Sigma\hat{D}_T^{-1/2}-D^{-1/2}\Sigma D^{-1/2}\|_{\ell_2}\notag \\
&\leq \|\hat{D}_T^{-1/2}\|_{\ell_2}^2\|\hat{\Sigma}_T-\Sigma\|_{\ell_2}+\|\hat{D}_T^{-1/2}\Sigma\hat{D}_T^{-1/2}-D^{-1/2}\Sigma D^{-1/2}\|_{\ell_2}.\label{ali correlation spectral norm}
\end{align}
Invoking Lemmas \ref{lemma spectral norm perturbation covariance matrix} and \ref{lemmaD}(v), we conclude that the first term of (\ref{ali correlation spectral norm}) is $O_p(\sqrt{n/T})$. Let's consider the second term of (\ref{ali correlation spectral norm}). Write
\begin{align*}
& \|\hat{D}_T^{-1/2}\Sigma \hat{D}_T^{-1/2}-D^{-1/2}\Sigma \hat{D}_T^{-1/2}+D^{-1/2}\Sigma \hat{D}_T^{-1/2}-D^{-1/2}\Sigma D^{-1/2}\|_{\ell_2}\\
&\leq \|(\hat{D}_T^{-1/2}-D^{-1/2})\Sigma \hat{D}_T^{-1/2}\|_{\ell_2}+\|D^{-1/2}\Sigma (\hat{D}_T^{-1/2}-D^{-1/2})\|_{\ell_2}\\
&\leq \|\hat{D}_T^{-1/2}\|_{\ell_2}\|\Sigma\|_{\ell_2}\| \hat{D}_T^{-1/2}-D^{-1/2}\|_{\ell_2}+ \|D^{-1/2}\|_{\ell_2}\|\Sigma\|_{\ell_2}\| \hat{D}_T^{-1/2}-D^{-1/2}\|_{\ell_2}.
\end{align*}
Invoking Lemma \ref{lemmaD}(ii), (iv), (v) and (vi), we conclude that the second term of (\ref{ali correlation spectral norm}) is $O_p(\sqrt{n/T})$.
For part (ii), it follows trivially from Lemma \ref{lemma Gil2012 4.1}.
For part (iii), we have
\begin{align*}
&\|\hat{\theta}_T-\theta^0\|_2=\|(E^{\intercal}WE)^{-1}E^{\intercal}W\|_{\ell_2}\|D_n^+\|_{\ell_2} \|\log \hat{\Theta}_T-\log \Theta\|_F\leq \\
& \|(E^{\intercal}WE)^{-1}E^{\intercal}W\|_{\ell_2}\sqrt{n} \|\log \hat{\Theta}_T-\log \Theta\|_{\ell_2}=O(\sqrt{\varpi \kappa(W)/n})\sqrt{n}O_p(\sqrt{n/T})=O_p\del [3]{\sqrt{\frac{n \varpi \kappa(W)}{T}}},
\end{align*}
where the first inequality is due to (\ref{eqn spectral norm for Dnplus and Dn}), and the second equality is due to (\ref{align Eplus spectral norm}) and parts (i)-(ii) of this theorem.
\end{proof}

\subsection{The Proof of Theorem \ref{thm asymptotic normality}}

\label{sec A4}

In this subsection, we give a proof for Theorem \ref{thm asymptotic normality}%
. We will first give some preliminary lemmas leading to the proof of this theorem.

The following lemma linearizes the matrix logarithm.

\begin{lemma}
\label{prop matrix log expansion} Suppose both $n\times n$ matrices $A+B$ and
$A$ are real, symmetric, and positive definite for all $n$ with the minimum
eigenvalues bounded away from zero by absolute constants. Suppose the maximum
eigenvalue of $A$ is bounded from above by an absolute constant. Further
suppose
\begin{equation}
\enVert[1]{[t(A-I)+I]^{-1}tB}_{\ell_{2}}\leq C<1
\label{eqn l2norm less than C less than 1}%
\end{equation}
for all $t\in\lbrack0,1]$ and some constant $C$. Then
\[
\log(A+B)-\log A=\int_{0}^{1}[t(A-I)+I]^{-1}B[t(A-I)+I]^{-1}dt+O(\Vert
B\Vert_{\ell_{2}}^{2}\vee\Vert B\Vert_{\ell_{2}}^{3}).
\]

\end{lemma}

\bigskip

The conditions of the preceding lemma implies that for every $t\in[0,1]$,
$t(A-I)+I$ is positive definite for all $n$ with the minimum eigenvalue
bounded away from zero by an absolute constant (\cite{hornjohnson1985} Theorem
4.3.1 p181). Lemma \ref{prop matrix log expansion} has a flavour of Frechet
derivative because $\int_{0}^{1}[t(A-I)+I]^{-1}B[t(A-I)+I]^{-1}dt$ is the
Frechet derivative of matrix logarithm at $A$ in the direction $B$
(\cite{higham2008} (11.10) p272); however, this lemma is slightly stronger in
the sense of a sharper bound on the remainder.

\begin{proof}
Since both $A+B$ and $A$ are positive definite for all $n$, with minimum eigenvalues real and  bounded away from zero by absolute constants, by Theorem \ref{thm integral form for log} in Appendix \ref{sec oldappendixB}, we have
\[\log (A+B)=\int_{0}^{1}(A+B-I)[t(A+B-I)+I]^{-1}dt,\quad \log A=\int_{0}^{1}(A-I)[t(A-I)+I]^{-1}dt.\]
Use (\ref{eqn l2norm less than C less than 1}) to invoke Lemma \ref{prop matrix inverse perturbation} in Appendix \ref{sec oldappendixB} to expand $[t(A-I)+I+tB]^{-1}$ to get
\[[t(A-I)+I+tB]^{-1}=[t(A-I)+I]^{-1}-[t(A-I)+I]^{-1}tB[t(A-I)+I]^{-1}+O(\|B\|^2_{\ell_2})\]
and substitute into the expression of $\log (A+B)$
\begin{align*}
&\log (A+B)\\
&=\int_{0}^{1}(A+B-I)\left\lbrace [t(A-I)+I]^{-1}-[t(A-I)+I]^{-1}tB[t(A-I)+I]^{-1}+O(\|B\|^2_{\ell_2})\right\rbrace dt\\
&=\log A+\int_{0}^{1}B[t(A-I)+I]^{-1}dt-\int_{0}^{1}t(A+B-I)[t(A-I)+I]^{-1}B[t(A-I)+I]^{-1}dt\\
&\qquad +(A+B-I)O(\|B\|^2_{\ell_2})\\
&=\log A+\int_{0}^{1}[t(A-I)+I]^{-1}B[t(A-I)+I]^{-1}dt-\int_{0}^{1}tB[t(A-I)+I]^{-1}B[t(A-I)+I]^{-1}dt\\
&\qquad +(A+B-I)O(\|B\|^2_{\ell_2})\\
&=\log A+\int_{0}^{1}[t(A-I)+I]^{-1}B[t(A-I)+I]^{-1}dt+O(\|B\|^2_{\ell_2}\vee \|B\|^3_{\ell_2}),
\end{align*}
where the last equality follows from $\text{maxeval}(A)<C<\infty$ and $\text{mineval}[t(A-I)+I]>C'>0$.
\end{proof}

\bigskip

\begin{lemma}
\label{prop mini eigenvalue} Suppose Assumptions \ref{assu subgaussian vector}%
(i), \ref{assu mixing}, \ref{assu n indexed by T}(i) and
\ref{assu about D and Dhat}(i) hold with $1/r_{1}+1/r_{2}>1$.

\begin{enumerate}
[(i)]

\item Then $\Theta$ has minimum eigenvalue bounded away from zero by an
absolute constant and maximum eigenvalue bounded from above by an absolute constant.

\item Then $\hat{\Theta}_{T}$ has minimum eigenvalue bounded away from zero by
an absolute constant and maximum eigenvalue bounded from above by an absolute
constant with probability approaching 1.
\end{enumerate}
\end{lemma}

\begin{proof}
For part (i), the maximum eigenvalue of $\Theta$ is its spectral norm, i.e., $\|\Theta\|_{\ell_2}$.
\begin{align*}
\|\Theta\|_{\ell_2}=\|D^{-1/2}\Sigma D^{-1/2}\|_{\ell_2}\leq \|D^{-1/2}\|_{\ell_2}^2\|\Sigma \|_{\ell_2}<C,
\end{align*}
where the last inequality is due to Lemma \ref{lemmaD}(ii) and (vi). Now let's consider the minimum eigenvalue of $\Theta$.
\begin{align*}
& \text{mineval}(\Theta)=\text{mineval}(D^{-1/2}\Sigma D^{-1/2})=\min_{\|a\|_2=1}a^{\intercal}D^{-1/2}\Sigma D^{-1/2}a \geq \min_{\|a\|_2=1}\text{mineval}(\Sigma)\|D^{-1/2}a\|_2^2\\
& =\text{mineval}(\Sigma)\min_{\|a\|_2=1}a^{\intercal}D^{-1}a=\text{mineval}(\Sigma)\text{mineval}(D^{-1})=\frac{\text{mineval}(\Sigma)}{\text{maxeval}(D)}>0,
\end{align*}
where the second equality is due to Rayleigh-Ritz theorem, and the last inequality is due to Assumption \ref{assu about D and Dhat}(i) and Lemma \ref{lemmaD}(vi).
For part (ii), the maximum eigenvalue of $\hat{\Theta}$ is its spectral norm, i.e., $\|\hat{\Theta}\|_{\ell_2}$.
\begin{align*}
\|\hat{\Theta}_T\|_{\ell_2}\leq \|\hat{\Theta}_T-\Theta\|_{\ell_2}+\|\Theta\|_{\ell_2}=O_p\del[3]{\sqrt{\frac{n}{T}}}+\|\Theta\|_{\ell_2}=O_p(1)
\end{align*}
where the first equality is due to Theorem \ref{thm main rate of convergence}(i) and the last equality is due to part (i). The minimum eigenvalue of $\hat{\Theta}_T$ is $1/ \text{maxeval}(\hat{\Theta}^{-1}_T)$. Since $\|\Theta^{-1}\|_{\ell_2}=\text{maxeval}(\Theta^{-1})=1/\text{mineval}(\Theta)=O(1)$ by part (i) and $\|\hat{\Theta}_T-\Theta\|_{\ell_2}=O_p(\sqrt{n/T})$ by Theorem \ref{thm main rate of convergence}(i), we can invoke Lemma \ref{lemma saikkonen lemma} in Appendix \ref{sec oldappendixB} to get
\[\|\hat{\Theta}^{-1}_T-\Theta^{-1}\|_{\ell_2}=O_p(\sqrt{n/T}),\]
whence we have
\[\|\hat{\Theta}^{-1}_T\|_{\ell_2}\leq \|\hat{\Theta}^{-1}_T-\Theta^{-1}\|_{\ell_2}+\|\Theta^{-1}\|_{\ell_2}=O_p(1).\]
Thus the minimum eigenvalue of $\hat{\Theta}_T$ is bounded away from zero by an absolute constant.
\end{proof}

\bigskip

Recall $\hat{H}_{T}$ defined in (\ref{eqn hat H}). The following lemma gives a
rate of convergence for $\hat{H}_{T}$. It is also true when one replaces
$\hat{H}_{T}$ with $\hat{H}_{T,D}$ defined in (\ref{eqn hat H D}).

\begin{lemma}
Let Assumptions \ref{assu subgaussian vector}(i), \ref{assu mixing},
\ref{assu n indexed by T}(i) and \ref{assu about D and Dhat}(i) be satisfied
with $1/r_{1}+1/r_{2}>1$. Then we have
\begin{equation}
\label{eqn spectral norm of H and Hhat}\|H\|_{\ell_{2}}=O(1),\qquad\|\hat
{H}_{T}\|_{\ell_{2}}=O_{p}(1),\qquad\|\hat{H}_{T}-H\|_{\ell_{2}}%
=O_{p}\del[3]{ \sqrt{\frac{n}{T}}}.
\end{equation}

\end{lemma}

\begin{proof}
The proofs for $\|H\|_{\ell_2}=O(1)$ and $\|\hat{H}_T\|_{\ell_2}=O_p(1)$ are exactly the same, so we only give the proof for the latter. Define $A_t:=[t(\hat
{\Theta}_{T}-I)+I]^{-1}$ and $B_t:=[t(\Theta-I)+I]^{-1}$.
\begin{align*}
&\|\hat{H}_T\|_{\ell_2}=\enVert[3]{ \int_{0}^{1}A_t\otimes A_tdt}_{\ell_2}\leq \int_{0}^{1}\enVert[1]{ A_t\otimes A_t}_{\ell_2}dt\leq \max_{t\in [0,1]}\enVert[1]{ A_t\otimes A_t}_{\ell_2}=\max_{t\in [0,1]}\enVert[0]{ A_t}_{\ell_2}^2\\
&=\max_{t\in [0,1]}\{\text{maxeval}([t(\hat
{\Theta}_{T}-I)+I]^{-1})\}^2=\max_{t\in [0,1]}\cbr[3]{\frac{1}{\text{mineval}(t(\hat
{\Theta}_{T}-I)+I)}}^2=O_p(1),
\end{align*}
where the second equality is due to Lemma \ref{lemma l2norm of kronecker product} in Appendix \ref{sec oldappendixB}, and the last equality is due to Lemma \ref{prop mini eigenvalue}(ii). Now,
\begin{align*}
&\|\hat{H}_T-H\|_{\ell_2}=\enVert[3]{ \int_{0}^{1}A_t\otimes A_t-B_t\otimes B_tdt}_{\ell_2} \leq \int_{0}^{1}\left\| A_t\otimes A_t-B_t\otimes B_t\right\|_{\ell_2}dt\\
&\leq \max_{t\in [0,1]}\left\| A_t\otimes A_t-B_t\otimes B_t\right\|_{\ell_2}=\max_{t\in [0,1]}\left\| A_t\otimes A_t-A_t\otimes B_t+A_t\otimes B_t-B_t\otimes B_t\right\|_{\ell_2}\\
&=\max_{t\in [0,1]}\left\| A_t\otimes (A_t- B_t)+(A_t-B_t)\otimes B_t\right\|_{\ell_2}\leq \max_{t\in [0,1]}\del[1]{ \left\| A_t\otimes (A_t- B_t)\right\|_{\ell_2} +\left\| (A_t-B_t)\otimes B_t\right\|_{\ell_2}} \\
&=\max_{t\in [0,1]}\del[1]{ \left\| A_t\right\|_{\ell_2} \left\|  A_t- B_t\right\|_{\ell_2} +\left\| A_t-B_t\right\|_{\ell_2} \left\|  B_t\right\|_{\ell_2}} =\max_{t\in [0,1]} \left\| A_t-B_t\right\|_{\ell_2}( \left\| A_t\right\|_{\ell_2}+\left\|  B_t\right\|_{\ell_2}) \\
&=O_p(1)\max_{t\in [0,1]}\left\| [t(\hat
{\Theta}_{T}-I)+I]^{-1}-[t(\Theta-I)+I]^{-1}\right\|_{\ell_2}
\end{align*}
where the first inequality is due to Jensen's inequality, the third equality is due to special properties of Kronecker product, the fourth equality is due to Lemma \ref{lemma l2norm of kronecker product} in Appendix \ref{sec oldappendixB}, and the last equality is because Lemma \ref{prop mini eigenvalue} implies
\[\|[t(\hat{\Theta}_{T}-I)+I]^{-1}\|_{\ell_2}=O_p(1) \qquad \|[t(\Theta-I)+I]^{-1}\|_{\ell_2}=O(1).\]
Now
\begin{align*}
\left\| [t(\hat
{\Theta}_{T}-I)+I]-[t(\Theta-I)+I]\right\|_{\ell_2}=t\|\hat
{\Theta}_{T}-\Theta\|_{\ell_2}=O_p(\sqrt{n/T}),
\end{align*}
where the last equality is due to Theorem \ref{thm main rate of convergence}(i). The lemma then follows after invoking Lemma \ref{lemma saikkonen lemma} in Appendix \ref{sec oldappendixB}.
\end{proof}

\bigskip

\begin{lemma}
Given the $n^{2}\times n(n+1)/2$ duplication matrix $D_{n}$ and its
Moore-Penrose generalised inverse $D_{n}^{+}=(D_{n}^{\intercal}D_{n}%
)^{-1}D_{n}^{\intercal}$ (i.e., $D_{n}$ is full-column rank), we have
\begin{equation}
\label{eqn spectral norm for Dnplus and Dn}\|D_{n}^{+}\|_{\ell_{2}}%
=\|D_{n}^{+\intercal}\|_{\ell_{2}}=1,\qquad\|D_{n}\|_{\ell_{2}}=\|D_{n}%
^{\intercal}\|_{\ell_{2}}=2.
\end{equation}

\end{lemma}

\begin{proof}
First note that $D_n^{\intercal}D_n$ is a diagonal matrix with diagonal entries either 1 or 2.  Using the fact that for any matrix $A$, $AA^{\intercal}$ and $A^{\intercal}A$ have the same non-zero eigenvalues, we have
\begin{align*}
\|D_n^{+\intercal}\|_{\ell_2}^2&=\text{maxeval}(D_n^+D_n^{+\intercal})=\text{maxeval}((D_n^{\intercal}D_n)^{-1})=1\\
\|D_n^{+}\|_{\ell_2}^2&=\text{maxeval}(D_n^{+\intercal}D_n^+)=\text{maxeval}(D_n^+D_n^{+\intercal})=\text{maxeval}((D_n^{\intercal}D_n)^{-1})=1\\
\|D_n\|_{\ell_2}^2&=\text{maxeval}(D_n^{\intercal}D_n)=2\\
\|D_n^{\intercal}\|_{\ell_2}^2&=\text{maxeval}(D_nD_n^{\intercal})=\text{maxeval}(D_n^{\intercal}D_n)=2
\end{align*}
\end{proof}

\bigskip

We are now ready to give a proof for Theorem \ref{thm asymptotic normality}.

\begin{proof}[Proof of Theorem \ref{thm asymptotic normality}]
We first show that (\ref{eqn l2norm less than C less than 1}) is satisfied with probability approaching 1 for $A=\Theta$ and $B=\hat{\Theta}_T-\Theta$. That is,
\[\Vert\lbrack t(\Theta-I)+I]^{-1}t(\hat{\Theta}_T-\Theta)\Vert_{\ell_{2}}\leq C<1\qquad\text{with probability approaching 1},\]
for some constant $C$.
\begin{align*}
&\|[t(\Theta-I)+I]^{-1}t(\hat{\Theta}_T-\Theta)\|_{\ell_2}\leq t \|[t(\Theta-I)+I]^{-1}\|_{\ell_2}\|\hat{\Theta}_T-\Theta\|_{\ell_2}\\
&=\|[t(\Theta-I)+I]^{-1}\|_{\ell_2}O_p(\sqrt{n/T})=O_p(\sqrt{n/T})/\text{mineval}(t(\Theta-I)+I)=o_p(1),
\end{align*}
where the first equality is due to Theorem \ref{thm main rate of convergence}(i), and the last equality is due to $\text{mineval}(t(\Theta-I)+I)>C>0$ for some absolute constant $C$ (implied by Lemma \ref{prop mini eigenvalue}(i)) and Assumption \ref{assu n indexed by T}(i).
Together with Lemma \ref{prop mini eigenvalue}(ii) and Lemma 2.12 in \cite{vandervaart1998}, we can invoke Lemma \ref{prop matrix log expansion} stochastically with $A=\Theta$ and $B=\hat{\Theta}_T-\Theta$:
\begin{equation}
\label{eqn linearise logMt minus logTheta}
\log \hat{\Theta}_T-\log \Theta=\int_{0}^{1}[t(\Theta-I)+I]^{-1}(\hat{\Theta}_T-\Theta)[t(\Theta-I)+I]^{-1}dt +O_p(\|\hat{\Theta}_T-\Theta\|_{\ell_2}^2).
\end{equation}
(We can invoke Lemma \ref{prop matrix log expansion} stochastically because the remainder of the log linearization is zero when the perturbation is zero. Moreover, we have $\|\hat{\Theta}_T-\Theta\|_{\ell_2}\xrightarrow{p}0$ under Assumption \ref{assu n indexed by T}(i).)
Note that (\ref{eqn linearise logMt minus logTheta}) also holds with $\hat{\Theta}_{T}$ replaced by $\hat{\Theta}_{T,D}$ by repeating the same argument. That is,
\begin{equation*}
\log \hat{\Theta}_{T,D}-\log \Theta=\int_{0}^{1}[t(\Theta-I)+I]^{-1}(\hat{\Theta}_{T,D}-\Theta)[t(\Theta-I)+I]^{-1}dt +O_p(\|\hat{\Theta}_{T,D}-\Theta\|_{\ell_2}^2).
\end{equation*}
Now we can write
\begin{align*}
\frac{\sqrt{T}c^{\intercal}(\hat{\theta}_{T,D}-\theta^0)}{\sqrt{c^{\intercal}\hat{J}_{T,D}c}}&=\frac{\sqrt{T}c^{\intercal}(E^{\intercal}WE)^{-1}E^{\intercal}WD_n^+H(D^{-1/2}\otimes D^{-1/2})\ve (\hat{\Sigma}_T-\Sigma)}{\sqrt{c^{\intercal}\hat{J}_{T,D}c}}\\
&\qquad+\frac{\sqrt{T}c^{\intercal}(E^{\intercal}WE)^{-1}E^{\intercal}WD_n^+\ve O_p(\|\hat{\Theta}_{T,D}-\Theta\|_{\ell_2}^2)}{\sqrt{c^{\intercal}\hat{J}_{T,D}c}}\\
& =:\hat{t}_{D,1}+\hat{t}_{D,2}.
\end{align*}
Define
\[t_{D,1}:=\frac{\sqrt{T}c^{\intercal}(E^{\intercal}WE)^{-1}E^{\intercal}WD_n^+H(D^{-1/2}\otimes D^{-1/2})\ve (\tilde{\Sigma}_T-\Sigma)}{\sqrt{c^{\intercal}J_{D}c}}.\]
To prove Theorem \ref{thm asymptotic normality}, it suffices to show $t_{D,1}\xrightarrow{d}N(0,1)$, $t_{D,1}-\hat{t}_{D,1}=o_p(1)$, and $\hat{t}_{D,2}=o_p(1)$.
\subsubsection{$t_{D,1}\xrightarrow{d}N(0,1)$}
\label{sec asymptotic normality of t1 prime}
We now prove that $t_{D,1}$ is asymptotically distributed as a standard normal.
\begin{align*}
& t_{D,1}=\frac{\sqrt{T}c^{\intercal}(E^{\intercal}WE)^{-1}E^{\intercal}WD_n^+H(D^{-1/2}\otimes D^{-1/2})\ve \del[2] {\frac{1}{T}\sum_{t=1}^{T}\sbr[1]{ (y_t-\mu)(y_t-\mu)^{\intercal}-\mathbb{E}(y_t-\mu)(y_t-\mu)^{\intercal}}}}{\sqrt{c^{\intercal}J_{D}c}}\\
&=\sum_{t=1}^{T}\frac{T^{-1/2}c^{\intercal}(E^{\intercal}WE)^{-1}E^{\intercal}WD_n^+H(D^{-1/2}\otimes D^{-1/2})\ve \sbr[1]{ (y_t-\mu)(y_t-\mu)^{\intercal}-\mathbb{E}(y_t-\mu)(y_t-\mu)^{\intercal}}}{\sqrt{c^{\intercal}J_{D}c}}\\
&=:\sum_{t=1}^{T}U_{D,T,n,t}.
\end{align*}
Define a triangular array of sigma algebras $\{\mathcal{F}_{T,n,t}, t=0,1,2,\ldots, T\}$ by $\mathcal{F}_{T,n,t}:=\mathcal{F}_{t}$ (the only non-standard thing is that this triangular array has one more subscript $n$). It is easy to see that $U_{D,T,n,t}$ is $\mathcal{F}_{T,n,t}$-measurable. We now show that  $\{U_{D,T,n,t}, \mathcal{F}_{T,n,t}\}$ is a martingale difference sequence (i.e., $\mathbb{E}[U_{D,T,n,t}|\mathcal{F}_{T,n,t-1}]=0$ almost surely for $t=1,\ldots, T$). It suffices to show for all $t$
\begin{equation}
\label{eqn U mds}
\mathbb{E}\sbr[1]{(y_t-\mu)(y_t-\mu)^{\intercal}-\mathbb{E}[(y_t-\mu)(y_t-\mu)^{\intercal}]|\mathcal{F}_{T,n,t-1}}=0\qquad a.s..
\end{equation}
This is straightforward via Assumption \ref{assu mds}.
We now check conditions (i)-(iii) of Theorem \ref{thm mcleish clt} in Appendix \ref{sec oldappendixB}.
We first investigate at what rate the denominator $\sqrt{c^{\intercal}J_{D}c}$ goes to zero:
\begin{align*}
& c^{\intercal}J_{D}c=c^{\intercal}(E^{\intercal}WE)^{-1}E^{\intercal}WD_{n}^{+}H(D^{-1/2}\otimes D^{-1/2})V(D^{-1/2}\otimes D^{-1/2})HD_{n}^{+^{\intercal}}WE(E^{\intercal}WE)^{-1}c\\
&\geq \text{mineval}(V)\text{mineval}(D^{-1}\otimes D^{-1})\text{mineval}(H^2)\text{mineval}(D_n^+D_n^{+\intercal})\text{mineval}(W)\text{mineval}((E^{\intercal}WE)^{-1})\\
&=\frac{\text{mineval}(V)\text{mineval}^2(H)}{\text{maxeval}(D\otimes D)\text{maxeval}(D_n^{\intercal}D_n)\text{maxeval}(W^{-1})\text{maxeval}(E^{\intercal}WE)}\\
&\geq \frac{\text{mineval}(V)\text{mineval}^2(H)}{\text{maxeval}(D\otimes D)\text{maxeval}(D_n^{\intercal}D_n)\text{maxeval}(W^{-1})\text{maxeval}(W)\text{maxeval}(E^{\intercal}E)}
\end{align*}
where the first and third inequalities are true by repeatedly invoking the Rayleigh-Ritz theorem. Note that
\begin{equation}
\label{eqn maxeval EE}
\text{maxeval}(E^{\intercal}E)\leq \text{tr}(E^{\intercal}E)\leq s\cdot n,
\end{equation}
where the last inequality is due to Lemma \ref{prop decode E}. For future reference
\begin{equation}
\label{eqn E l2 rate}
\|E\|_{\ell_2}=\|E^{\intercal}\|_{\ell_2}=\sqrt{\text{maxeval}(E^{\intercal}E)}\leq \sqrt{sn}.
\end{equation}
Since the minimum eigenvalue of $H$ is bounded away from zero by an absolute constant by Lemma \ref{prop mini eigenvalue}(i), the maximum eigenvalue of $D$ is bounded from above by an absolute constant (Lemma \ref{lemmaD}(vi)), and $\text{maxeval}[D_n^{\intercal}D_n]$ is bounded from above since  $D_n^{\intercal}D_n$ is a diagonal matrix with diagonal entries either 1 or 2, we have, via Assumption \ref{assu mineval of V}
\begin{equation}
\label{eqn inverse square root G rate}
\frac{1}{\sqrt{c^{\intercal}J_{D}c}}=O(\sqrt{s\cdot n\cdot \kappa(W)}).
\end{equation}
Also note that
\begin{align}
&\|(E^{\intercal}WE)^{-1}E^{\intercal}W^{1/2}\|_{\ell_2}=\sqrt{\text{maxeval}\del[1]{\sbr[1]{(E^{\intercal}WE)^{-1}E^{\intercal}W^{1/2}}^{\intercal}(E^{\intercal}WE)^{-1}E^{\intercal}W^{1/2}}}\notag \\
&=\sqrt{\text{maxeval}\del[1]{(E^{\intercal}WE)^{-1}E^{\intercal}W^{1/2}\sbr[1]{(E^{\intercal}WE)^{-1}E^{\intercal}W^{1/2}}^{\intercal}}}\notag\\
&=\sqrt{\text{maxeval}\del[1]{(E^{\intercal}WE)^{-1}E^{\intercal}W^{1/2}W^{1/2}E(E^{\intercal}WE)^{-1}}}\notag \\
&=\sqrt{\text{maxeval}\del[1]{(E^{\intercal}WE)^{-1}}}=\sqrt{\frac{1}{\text{mineval}(E^{\intercal}WE)}}\leq \sqrt{\frac{1}{\text{mineval}(E^{\intercal}E)\text{mineval}(W)}}\notag\\
&=O\del[1]{\sqrt{\varpi/n}}\sqrt{\|W^{-1}\|_{\ell_2}},\notag
\end{align}
where the second equality is due to the fact that for any matrix $A$, $AA^{\intercal}$ and $A^{\intercal}A$ have the same non-zero eigenvalues, the third equality is due to $(A^{\intercal})^{-1}=(A^{-1})^{\intercal}$, and the last equality is due to Assumption \ref{assu about D and Dhat}(ii). Thus
\begin{align}
\|(E^{\intercal}WE)^{-1}E^{\intercal}W\|_{\ell_2}=O(\sqrt{\varpi \kappa(W)/n}), \label{align Eplus spectral norm}
\end{align}
whence we have
\begin{align}
\enVert[1]{c^{\intercal}(E^{\intercal}WE)^{-1}E^{\intercal}WD_n^+H(D^{-1/2}\otimes D^{-1/2})}_2=O(\sqrt{\varpi \kappa(W)/n}),\label{align Eplus spectral norm +1}
\end{align}
via (\ref{eqn spectral norm of H and Hhat}) and Lemma \ref{lemmaD}(ii).
We now verify (i) and (ii) of Theorem \ref{thm mcleish clt} in Appendix \ref{sec oldappendixB}. We shall use Orlicz norms as defined in \cite{vandervaartWellner1996}: Let $\psi: \mathbb{R}^+\to \mathbb{R}^+$ be a non-decreasing, convex function with $\psi(0)=0$ and $\lim_{x\to \infty}\psi(x)=\infty$, where $\mathbb{R}^+$ denotes the set of nonnegative real numbers. Then, the Orlicz norm of a random variable $X$ is given by
\begin{align*}
\enVert{X}_\psi=\inf\left\{C>0:\mathbb{E}\psi\left(|X|/C\right)\leq 1\right\},
\end{align*}
where $\inf \emptyset =\infty$. We shall use Orlicz norms for  $\psi(x)=\psi_p(x)=e^{x^p}-1$ for $p=1,2$ in this article.
We consider $|U_{D,T,n,t}|$ first.
\begin{align*}
&|U_{D,T,n,t}|=\\
&\envert[3]{\frac{T^{-1/2}c^{\intercal}(E^{\intercal}WE)^{-1}E^{\intercal}WD_n^+H(D^{-1/2}\otimes D^{-1/2})\ve \sbr[1]{ (y_t-\mu)(y_t-\mu)^{\intercal}-\mathbb{E}(y_t-\mu)(y_t-\mu)^{\intercal}}}{\sqrt{c^{\intercal}J_{D}c}}}\\
&\leq \frac{T^{-1/2}\|c^{\intercal}(E^{\intercal}WE)^{-1}E^{\intercal}WD_n^+H(D^{-1/2}\otimes D^{-1/2})\|_2\|\ve \sbr[1]{ (y_t-\mu)(y_t-\mu)^{\intercal}-\mathbb{E}(y_t-\mu)(y_t-\mu)^{\intercal}}\|_2}{\sqrt{c^{\intercal}J_{D}c}}\\
&= O\del[3]{\sqrt{\frac{\varpi s \kappa^2(W)}{T}}}\enVert[1]{  (y_t-\mu)(y_t-\mu)^{\intercal}-\mathbb{E}(y_t-\mu)(y_t-\mu)^{\intercal}}_F\\
&\leq  O\del[3]{\sqrt{\frac{n^2 \varpi s \kappa^2(W)}{T}}}\enVert[1]{  (y_t-\mu)(y_t-\mu)^{\intercal}-\mathbb{E}(y_t-\mu)(y_t-\mu)^{\intercal}}_{\infty}
\end{align*}
where the second equality is due to (\ref{eqn inverse square root G rate}) and (\ref{align Eplus spectral norm +1}).
Consider
\begin{align*}
& \enVert[2]{\enVert[1]{  (y_t-\mu)(y_t-\mu)^{\intercal}-\mathbb{E}(y_t-\mu)(y_t-\mu)^{\intercal}}_{\infty}}_{\psi_1}=\enVert[2]{\max_{1\leq i,j\leq n}\envert[1]{ (y_{t,i}-\mu_i)(y_{t,j}-\mu_j)-\mathbb{E}(y_{t,i}-\mu_i)(y_{t,j}-\mu_j)}}_{\psi_1}\\
&\leq\log (1+n^2) \max_{1\leq i,j\leq n}\enVert[2]{ (y_{t,i}-\mu_i)(y_{t,j}-\mu_j)-\mathbb{E}(y_{t,i}-\mu_i)(y_{t,j}-\mu_j)}_{\psi_1}\\
&\leq2\log (1+n^2) \max_{1\leq i,j\leq n}\enVert[2]{ (y_{t,i}-\mu_i)(y_{t,j}-\mu_j)}_{\psi_1}
\end{align*}
where the first inequality is due to Lemma 2.2.2 in \cite{vandervaartWellner1996}.
Assumption \ref{assu subgaussian vector}(i) with $r_1=2$ gives $\mathbb{E}\sbr[1]{\exp(K_2|y_{t,i}|^2)}\leq K_1$ for all $i$. Then
\begin{align*}
& \mathbb{P}\del[1]{|(y_{t,i}-\mu_i)(y_{t,j}-\mu_j)|\geq \epsilon}\leq \mathbb{P}\del[1]{|y_{t,i}-\mu_i|\geq \sqrt{\epsilon}}+\mathbb{P}\del[1]{|y_{t,j}-\mu_j|\geq \sqrt{\epsilon}}\\
&\leq 2 \exp \sbr[1]{1-(\sqrt{\epsilon}/c_1)^2}=:Ke^{-C\epsilon}
\end{align*}
where the second inequality is due to Lemma \ref{lemmaexponentialtail}(iii). It follows from Lemma 2.2.1 in \cite{vandervaartWellner1996} that $\|(y_{t,i}-\mu_i)(y_{t,j}-\mu_j)\|_{\psi_1}\leq (1+K)/C$ for all $i,j,t$. Thus
\begin{align*}
&\enVert[2]{\max_{1\leq t\leq T}|U_{D,T,n,t}|}_{\psi_1}\leq \log(1+T) \max_{1\leq t\leq T}\enVert[2]{U_{D,T,n,t}}_{\psi_1}\\
&=O\del[3]{\log(1+T)\sqrt{\frac{n^2 \varpi s \kappa^2(W)}{T}}}\max_{1\leq t\leq T}\enVert[2]{\enVert[1]{  (y_t-\mu)(y_t-\mu)^{\intercal}-\mathbb{E}(y_t-\mu)(y_t-\mu)^{\intercal}}_{\infty}}_{\psi_1}\\
&=O\del[3]{\log(1+T)\log (1+n^2)\sqrt{\frac{n^2 \varpi s \kappa^2(W)}{T}}}\max_{1\leq t\leq T} \max_{1\leq i,j\leq n}\enVert[2]{ (y_{t,i}-\mu_i)(y_{t,j}-\mu_j)}_{\psi_1}\\
&=O\del[3]{\log(1+T)\log (1+n^2)\sqrt{\frac{n^2 \varpi s \kappa^2(W)}{T}}}=O\del[3]{\sqrt{\frac{n^2 \varpi s \kappa^2(W)\log^2(1+T)\log ^2(1+n^2)}{T}}}\\
&=o(1)
\end{align*}
where the last equality is due to Assumption \ref{assu n indexed by T}(ii). Since $\|U\|_{L_r}\leq r!\|U\|_{\psi_1}$ for any random variable $U$ (\cite{vandervaartWellner1996}, p95), we conclude that (i) and (ii) of Theorem \ref{thm mcleish clt} in Appendix \ref{sec oldappendixB} are satisfied.
We now verify condition (iii) of Theorem \ref{thm mcleish clt} in Appendix \ref{sec oldappendixB}. Since we have already shown in (\ref{eqn inverse square root G rate}) that  $s n\kappa(W)c^{\intercal}J_{D}c$ is bounded away from zero by an absolute constant, it suffices to show
\begin{align*}
& s n\kappa(W)\cdot  \envert[3]{\frac{1}{T}\sum_{t=1}^{T}\del[2]{c^{\intercal}(E^{\intercal}WE)^{-1}E^{\intercal}WD_n^+H(D^{-1/2}\otimes D^{-1/2})u_t}^2-c^{\intercal}J_{D}c}=o_p(1),
\end{align*}
where $u_t:=\ve \sbr[1]{ (y_t-\mu)(y_t-\mu)^{\intercal}-\mathbb{E}(y_t-\mu)(y_t-\mu)^{\intercal}}$. Note that
\begin{align*}
& s n\kappa(W)\cdot  \envert[3]{\frac{1}{T}\sum_{t=1}^{T}\del[2]{c^{\intercal}(E^{\intercal}WE)^{-1}E^{\intercal}WD_n^+H(D^{-1/2}\otimes D^{-1/2})u_t}^2-c^{\intercal}J_{D}c}\\
&\leq s n\kappa(W)\enVert[3]{\frac{1}{T}\sum_{t=1}^{T}u_tu_t^{\intercal}-V}_{\infty}\|c^{\intercal}(E^{\intercal}WE)^{-1}E^{\intercal}WD_n^+H(D^{-1/2}\otimes D^{-1/2})\|_1^2\\
&\leq s n^3\kappa(W)\enVert[3]{\frac{1}{T}\sum_{t=1}^{T}u_tu_t^{\intercal}-V}_{\infty}\|c^{\intercal}(E^{\intercal}WE)^{-1}E^{\intercal}WD_n^+H(D^{-1/2}\otimes D^{-1/2})\|_2^2\\
&\leq s n^3\kappa(W)\enVert[3]{\frac{1}{T}\sum_{t=1}^{T}u_tu_t^{\intercal}-V}_{\infty}\|(E^{\intercal}WE)^{-1}E^{\intercal}W\|_{\ell_2}^2\|D_n^+\|_{\ell_2}^2\|H\|_{\ell_2}^2\|D^{-1/2}\otimes D^{-1/2}\|_{\ell_2}^2\\
&=O_p(s n^3\kappa(W)) \sqrt{\frac{\log n}{T}} \cdot \frac{\varpi\kappa(W)}{n}=O_p\del[3]{\sqrt{\frac{s^2n^4\kappa^4(W) \log n\cdot\varpi^2}{T}}}=o_p(1)
\end{align*}
where the first equality is due to Lemma \ref{lemmaD}(ii), Lemma \ref{lemma l2norm of kronecker product} in Appendix \ref{sec oldappendixB}, (\ref{eqn spectral norm of H and Hhat}), (\ref{align Eplus spectral norm}), (\ref{eqn spectral norm for Dnplus and Dn}), and the fact that $\enVert[1]{T^{-1}\sum_{t=1}^{T}u_tu_t^{\intercal}-V}_{\infty}=O_p(\sqrt{\frac{\log n}{T}})$, which can be deduced from the proof of Lemma \ref{lemma rate for hatV-V infty} in SM \ref{sec A5},\footnote{To see this, write
\begin{align*}
& \frac{1}{T}u_tu_t^{\intercal}-V=\frac{1}{T}\sum_{t=1}^{T}\sbr[1]{(y_t-\mu)(y_t-\mu)^{\intercal}\otimes (y_t-\mu)(y_t-\mu)^{\intercal}}-\mathbb{E}\sbr[1]{(y_t-\mu)(y_t-\mu)^{\intercal}\otimes (y_t-\mu)(y_t-\mu)^{\intercal}}\\
&\quad  -\frac{1}{T}\sum_{t=1}^{T}\sbr[1]{(y_t-\mu)\otimes (y_t-\mu)}\cdot\mathbb{E}\sbr[1]{(y_t-\mu)^{\intercal}\otimes (y_t-\mu)^{\intercal}}+\mathbb{E}\sbr[1]{(y_t-\mu)\otimes (y_t-\mu)}\cdot\mathbb{E}\sbr[1]{(y_t-\mu)^{\intercal}\otimes (y_t-\mu)^{\intercal}}\\
&\quad  -\mathbb{E}\sbr[1]{(y_t-\mu)\otimes (y_t-\mu)}\cdot \frac{1}{T}\sum_{t=1}^{T}\sbr[1]{(y_t-\mu)^{\intercal}\otimes (y_t-\mu)^{\intercal}}+\mathbb{E}\sbr[1]{(y_t-\mu)\otimes (y_t-\mu)}\cdot\mathbb{E}\sbr[1]{(y_t-\mu)^{\intercal}\otimes (y_t-\mu)^{\intercal}}.
\end{align*}
Then many parts of the proof of Lemma \ref{lemma rate for hatV-V infty} in SM \ref{sec A5} could be recycled.} the last equality is due to Assumption \ref{assu n indexed by T}(ii). Thus condition (iii) of Theorem \ref{thm mcleish clt} in Appendix \ref{sec oldappendixB} is verified and $t_{D,1}\xrightarrow{d}N(0,1)$.
\subsubsection{$t_{D,1}-\hat{t}_{D,1}=o_p(1)$}
We now show that $t_{D,1}-\hat{t}_{D,1}=o_p(1)$. Let $A_D$ and $\hat{A}_D$ denote the numerators of $t_{D,1}$ and $\hat{t}_{D,1}$, respectively.
\begin{align*}
t_{D,1}-\hat{t}_{D,1} &=\frac{A_D}{\sqrt{c^{\intercal}J_{D}c}}-\frac{\hat{A}_D}{\sqrt{c^{\intercal}\hat{J}_{T,D}c}}=\frac{\sqrt{s n \kappa(W)}A_D}{\sqrt{s n \kappa(W)c^{\intercal}J_{D}c}}-\frac{\sqrt{s n \kappa(W)}\hat{A}_D}{\sqrt{s n \kappa(W)c^{\intercal}\hat{J}_{T,D}c}}.
\end{align*}
Since we have already shown in (\ref{eqn inverse square root G rate}) that  $s n\kappa(W)c^{\intercal}J_{D}c$ is bounded away from zero by an absolute constant, it suffices to show the denominators as well as numerators of $t_{D,1}$ and $\hat{t}_{D,1}$ are asymptotically equivalent.
\subsubsection{Denominators of $t_{D,1}$ and $\hat{t}_{D,1}$}
We first show that the denominators of $t_{D,1}$ and $\hat{t}_{D,1}$ are asymptotically equivalent, i.e.,
\[s n \kappa(W)|c^{\intercal}\hat{J}_{T,D}c-c^{\intercal}J_{D}c|=o_p(1).\]
Define
\[c^{\intercal}\tilde{J}_{T,D}c:=c^{\intercal}(E^{\intercal}WE)^{-1}E^{\intercal}WD_{n}^{+}\hat
{H}_{T,D}(D^{-1/2}\otimes D^{-1/2})V(D^{-1/2}\otimes D^{-1/2})\hat{H}_{T,D}D_{n}^{+^{\intercal}}WE(E^{\intercal
}WE)^{-1}c.\]
By the triangular inequality: $|sn\kappa(W)c^{\intercal}\hat{J}_{T,D}c-sn\kappa(W)c^{\intercal}J_{D}c|\leq |sn\kappa(W)c^{\intercal}\hat{J}_{T,D}c-sn\kappa(W)c^{\intercal}\tilde{J}_{T,D}c|+|sn\kappa(W)c^{\intercal}\tilde{J}_{T,D}c-sn\kappa(W)c^{\intercal}J_{D}c|$. First, we prove $|sn\kappa(W)c^{\intercal}\hat{J}_{T,D}c-sn\kappa(W)c^{\intercal}\tilde{J}_{T,D}c|=o_p(1)$.
\begin{align*}
& sn\kappa(W)|c^{\intercal}\hat{J}_{T,D}c-c^{\intercal}\tilde{J}_{T,D}c|\\
&=sn\kappa(W)|c^{\intercal}(E^{\intercal}WE)^{-1}E^{\intercal}WD_{n}^{+}\hat
{H}_{T,D}(D^{-1/2}\otimes D^{-1/2})\hat{V}_T(D^{-1/2}\otimes D^{-1/2})\hat{H}_{T,D}D_{n}^{+^{\intercal}}WE(E^{\intercal
}WE)^{-1}c\\
&\qquad -c^{\intercal}(E^{\intercal}WE)^{-1}E^{\intercal}WD_{n}^{+}\hat
{H}_{T,D}(D^{-1/2}\otimes D^{-1/2})V(D^{-1/2}\otimes D^{-1/2})\hat{H}_{T,D}D_{n}^{+^{\intercal}}WE(E^{\intercal
}WE)^{-1}c|\\
&=sn\kappa(W)\\
&\quad \cdot |c^{\intercal}(E^{\intercal}WE)^{-1}E^{\intercal}WD_{n}^{+}\hat
{H}_{T,D}(D^{-1/2}\otimes D^{-1/2})(\hat{V}_T-V)(D^{-1/2}\otimes D^{-1/2})\hat{H}_{T,D}D_{n}^{+^{\intercal}}WE(E^{\intercal
}WE)^{-1}c|\\
&\leq sn\kappa(W) \|\hat{V}_T-V\|_{\infty}\|(D^{-1/2}\otimes D^{-1/2})\hat{H}_{T,D}D_{n}^{+^{\intercal}}WE(E^{\intercal
}WE)^{-1}c\|_1^2\\
&\leq sn^3\kappa(W) \|\hat{V}_T-V\|_{\infty}\|(D^{-1/2}\otimes D^{-1/2})\hat{H}_{T,D}D_{n}^{+^{\intercal}}WE(E^{\intercal
}WE)^{-1}c\|_2^2\\
&\leq sn^3\kappa(W) \|\hat{V}_T-V\|_{\infty}\|(D^{-1/2}\otimes D^{-1/2})\|_{\ell_2}^2\|\hat{H}_{T,D}\|_{\ell_2}^2\|D_{n}^{+^{\intercal}}\|_{\ell_2}^2\|WE(E^{\intercal
}WE)^{-1}\|_{\ell_2}^2\\
&=O_p(sn^2\kappa^2(W)\varpi)\|\hat{V}_T-V\|_{\infty}=O_p\del [3]{\sqrt{\frac{n^4\kappa^4(W)s^2\varpi^2\log n}{T}}}=o_p(1),
\end{align*}
where $\|\cdot\|_{\infty}$ denotes the absolute elementwise maximum, the third equality is due to Lemma \ref{lemmaD}(ii), Lemma \ref{lemma l2norm of kronecker product} in Appendix \ref{sec oldappendixB}, (\ref{eqn spectral norm of H and Hhat}), (\ref{align Eplus spectral norm}), and (\ref{eqn spectral norm for Dnplus and Dn}), the second last equality is due to Lemma \ref{lemma rate for hatV-V infty} in SM \ref{sec A5}, and the last equality is due to Assumption \ref{assu n indexed by T}(ii).
We now prove $s n\kappa(W)|c^{\intercal}\tilde{J}_{T,D}c-c^{\intercal}J_{D}c|=o_p(1)$.
\begin{align}
& sn\kappa(W)|c^{\intercal}\tilde{J}_{T,D}c-c^{\intercal}J_{D}c|\notag\\
&=sn\kappa(W)|c^{\intercal}(E^{\intercal}WE)^{-1}E^{\intercal}WD_{n}^{+}\hat{H}_{T,D}(D^{-1/2} \otimes D^{-1/2} )V(D^{-1/2} \otimes D^{-1/2} )\hat{H}_{T,D}D_{n}^{+^{\intercal}}WE(E^{\intercal}WE)^{-1}c\notag \\
&\qquad -c^{\intercal}(E^{\intercal}WE)^{-1}E^{\intercal}WD_{n}^{+}H(D^{-1/2} \otimes D^{-1/2} )V(D^{-1/2} \otimes D^{-1/2} )HD_{n}^{+^{\intercal}}WE(E^{\intercal}WE)^{-1}c| \notag\\
&\leq s n\kappa(W) \envert[1]{\text{maxeval}\sbr[1]{(D^{-1/2} \otimes D^{-1/2} )V(D^{-1/2} \otimes D^{-1/2} )}}\|(\hat{H}_{T,D}-H)D_{n}^{+^{\intercal}}WE(E^{\intercal}WE)^{-1}c\|_2^2\notag \\
&\quad+2 s n \kappa(W)\|(D^{-1/2} \otimes D^{-1/2} )V(D^{-1/2} \otimes D^{-1/2} )HD_{n}^{+^{\intercal}}WE(E^{\intercal}WE)^{-1}c\|_2\notag\\
&\qquad \cdot\|(\hat{H}_{T,D}-H)D_{n}^{+^{\intercal}}WE(E^{\intercal}WE)^{-1}c\|_2\label{align tildeG-G}
\end{align}
where the inequality is due to Lemma \ref{lemma vandegeer} in Appendix \ref{sec oldappendixB}. We consider the first term of (\ref{align tildeG-G}) first.
\begin{align*}
& s n\kappa(W) \envert[1]{\text{maxeval}\sbr[1]{(D^{-1/2} \otimes D^{-1/2} )V(D^{-1/2} \otimes D^{-1/2} )}}\|(\hat{H}_{T,D}-H)D_{n}^{+^{\intercal}}WE(E^{\intercal}WE)^{-1}c\|_2^2\\
& =O(sn\kappa(W))\|\hat{H}_{T,D}-H\|_{\ell_2}^2\|D_{n}^{+^{\intercal}}\|_{\ell_2}^2\|WE(E^{\intercal}WE)^{-1}\|^2_{\ell_2}\\
&= O_p(sn\kappa^2(W)\varpi/T)=o_p(1),
\end{align*}
where the second last equality is due to (\ref{eqn spectral norm of H and Hhat}), (\ref{eqn spectral norm for Dnplus and Dn}), and (\ref{align Eplus spectral norm}), and the last equality is due to Assumption \ref{assu n indexed by T}(ii). We now consider the second term of (\ref{align tildeG-G}).
\begin{align*}
& 2 s n \kappa(W)\|(D^{-1/2} \otimes D^{-1/2} )V(D^{-1/2} \otimes D^{-1/2} )HD_{n}^{+^{\intercal}}WE(E^{\intercal}WE)^{-1}c\|_2\\
&\qquad \cdot \|(\hat{H}_{T,D}-H)D_{n}^{+^{\intercal}}WE(E^{\intercal}WE)^{-1}c\|_2\\
&\leq O(sn\kappa(W))\|H\|_{\ell_2}\|\hat{H}_{T,D}-H\|_{\ell_2}\|D^{+\intercal}_n\|_{\ell_2}^2\|WE(E^{\intercal}WE)^{-1}c\|_2^2=O(\sqrt{n\kappa^4(W)s^2\varpi^2/T})=o_p(1),
\end{align*}
where the first equality is due to (\ref{eqn spectral norm of H and Hhat}), (\ref{eqn spectral norm for Dnplus and Dn}), and (\ref{align Eplus spectral norm}), and the last equality is due to Assumption \ref{assu n indexed by T}(ii). We have proved $|sn\kappa(W)c^{\intercal}\tilde{J}_{T,D}c-sn\kappa(W)c^{\intercal}J_{D}c|=o_p(1)$ and hence $|sn\kappa(W)c^{\intercal}\hat{J}_{T,D}c-sn\kappa(W)c^{\intercal}J_{D}c|=o_p(1)$.
\subsubsection{Numerators of $t_{D,1}$ and $\hat{t}_{D,1}$}
We now show that numerators of $t_{D,1}$ and $\hat{t}_{D,1}$ are asymptotically equivalent, i.e.,
\[\sqrt{sn\kappa(W)}|A_D-\hat{A}_D|=o_p(1).\]
This is relatively straight forward.
\begin{align*}
&\sqrt{Tsn\kappa(W)}\envert[1]{c^{\intercal}(E^{\intercal}WE)^{-1}E^{\intercal}WD_n^+H(D^{-1/2}\otimes D^{-1/2})\ve (\hat{\Sigma}_T-\Sigma-\tilde{\Sigma}_T+\Sigma)}\\
&=\sqrt{Tsn\kappa(W)}\envert[1]{c^{\intercal}(E^{\intercal}WE)^{-1}E^{\intercal}WD_n^+H(D^{-1/2}\otimes D^{-1/2})\ve (\hat{\Sigma}_T-\tilde{\Sigma}_T)}\\
&=\sqrt{Tsn\kappa(W)}\envert[1]{c^{\intercal}(E^{\intercal}WE)^{-1}E^{\intercal}WD_n^+H(D^{-1/2}\otimes D^{-1/2})\ve \sbr[1]{(\bar{y}-\mu)(\bar{y}-\mu)^{\intercal}}}\\
&\leq \sqrt{Tsn\kappa(W)}\|(E^{\intercal}WE)^{-1}E^{\intercal}W\|_{\ell_2}\|D_n^+\|_{\ell_2}\|H\|_{\ell_2}\|D^{-1/2}\otimes D^{-1/2}\|_{\ell_2}\|\ve \sbr[1]{(\bar{y}-\mu)(\bar{y}-\mu)^{\intercal}}\|_2\\
&=O(\sqrt{Tsn\kappa(W)})\sqrt{\varpi \kappa(W)/n}\|(\bar{y}-\mu)(\bar{y}-\mu)^{\intercal}\|_F\\
&\leq O(\sqrt{Tsn\kappa(W)})\sqrt{\varpi \kappa(W)/n}n\|(\bar{y}-\mu)(\bar{y}-\mu)^{\intercal}\|_{\infty}\\
&=O(\sqrt{Tsn^2\kappa^2(W)\varpi})\max_{1\leq i,j\leq n}\envert[1]{(\bar{y}-\mu)_i(\bar{y}-\mu)_j}=O_p(\sqrt{Tsn^2\kappa^2(W)\varpi})\log n/T\\
&=O_p\del[3]{\sqrt{\frac{\log^3n \cdot n^2\kappa^2(W)\varpi}{T}}}=o_p(1),
\end{align*}
where the third equality is due to (\ref{eqn spectral norm of H and Hhat}), (\ref{eqn spectral norm for Dnplus and Dn}), and (\ref{align Eplus spectral norm}), the third last equality is due to (\ref{eqn rate for hatV-V infinity part D}) in SM \ref{sec A5}, and the last equality is due to Assumption \ref{assu n indexed by T}(ii).
\subsubsection{$\hat{t}_{D,2}=o_p(1)$}
Write
\[\hat{t}_{D,2}=\frac{\sqrt{T}\sqrt{s n\kappa(W)}c^{\intercal}(E^{\intercal}WE)^{-1}E^{\intercal}WD_n^+\ve O_p(\|\hat{\Theta}_{T,D}-\Theta\|_{\ell_2}^2)}{\sqrt{s n\kappa(W)c^{\intercal}\hat{J}_{T,D}c}}.\]
Since the denominator of the preceding equation is bounded away from zero by an absolute constant with probability approaching one by (\ref{eqn inverse square root G rate}) and that $|sn\kappa(W)c^{\intercal}\hat{J}_{T,D}c-sn\kappa(W)c^{\intercal}J_{D}c|=o_p(1)$, it suffices to show
\[\sqrt{T}\sqrt{sn\kappa(W)}c^{\intercal}(E^{\intercal}WE)^{-1}E^{\intercal}WD_n^+\ve O_p(\|\hat{\Theta}_{T,D}-\Theta\|_{\ell_2}^2)=o_p(1).\]
This is straightforward:
\begin{align*}
&|\sqrt{Tsn\kappa(W)}c^{\intercal}(E^{\intercal}WE)^{-1}E^{\intercal}WD^+_n\ve O_p(\|\hat{\Theta}_{T,D}-\Theta\|_{\ell_2}^2)|\\
&\leq \sqrt{Tsn\kappa(W)}\|c^{\intercal}(E^{\intercal}WE)^{-1}E^{\intercal}WD^+_n\|_2\|\ve O_p(\|\hat{\Theta}_{T,D}-\Theta\|_{\ell_2}^2)\|_2\\
&= O(\sqrt{Ts\varpi}\kappa(W))\|O_p(\|\hat{\Theta}_{T,D}-\Theta\|_{\ell_2}^2)\|_F= O(\sqrt{Ts\varpi n}\kappa(W))\|O_p(\|\hat{\Theta}_{T,D}-\Theta\|_{\ell_2}^2)\|_{\ell_2}\\
&=O(\sqrt{Ts\varpi n}\kappa(W))O_p(\|\hat{\Theta}_{T,D}-\Theta\|_{\ell_2}^2)=O_p\del[3]{ \frac{\kappa(W)\sqrt{Ts\varpi n}n}{T}} =O_p\del[3]{ \sqrt{\frac{s\varpi n^3\kappa^2(W)}{T}}}=o_p(1),
\end{align*}
where the last equality is due to Assumption \ref{assu n indexed by T}(ii).
\end{proof}

\subsection{Auxiliary Lemmas}

\label{sec oldappendixB}

This subsection of Appendix contains auxiliary lemmas which have been used in
other subsections of Appendix. We first review definitions of nets and
covering numbers.

\begin{definition}
[Nets and covering numbers]
Let $(T,d)$ be a metric space and fix $\varepsilon>0$.

\begin{enumerate}
[(i)]

\item A subset $\mathcal{N}_{\varepsilon}$ of $T$ is called an $\varepsilon
$-\textit{net} of $T$ if every point $x \in T$ satisfies $d(x,y)\leq
\varepsilon$ for some $y\in\mathcal{N}_{\varepsilon}$.

\item The minimal cardinality of an $\varepsilon$-net of $T$ is denoted
$|\mathcal{N}_{\varepsilon}|$ and is called the \textit{covering number} of $T
$ (at scale $\varepsilon$). Equivalently, $|\mathcal{N}_{\varepsilon}|$ is the
minimal number of balls of radius $\varepsilon$ and with centers in $T$ needed
to cover $T$.
\end{enumerate}
\end{definition}


\begin{lemma}
\label{lemma vol ball}
The unit Euclidean sphere $\{x\in\mathbb{R}^{n}:\|x\|_{2}= 1\}$ equipped with
the Euclidean metric satisfies for every $\varepsilon>0$ that
\[
|\mathcal{N}_{\varepsilon}|\leq\left(  1+\frac{2}{\varepsilon} \right)  ^{n}.
\]

\end{lemma}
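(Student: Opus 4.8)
The plan is to use the standard volumetric (packing) argument. First I would pass from covering to packing: let $\mathcal{N}_{\varepsilon}=\{x_{1},\ldots,x_{N}\}$ be a maximal $\varepsilon$-separated subset of the sphere $S^{n-1}:=\{x\in\mathbb{R}^{n}:\|x\|_{2}=1\}$, i.e.\ a maximal subset with $\|x_{i}-x_{j}\|_{2}>\varepsilon$ for all $i\neq j$ (such a maximal set exists since $S^{n-1}$ is compact, or by Zorn's lemma). By maximality, no point of $S^{n-1}$ can be added while preserving separation, which means every $x\in S^{n-1}$ lies within distance $\varepsilon$ of some $x_{i}$; hence $\mathcal{N}_{\varepsilon}$ is an $\varepsilon$-net, and it suffices to bound $N=|\mathcal{N}_{\varepsilon}|$.

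Next I would compare Lebesgue volumes in $\mathbb{R}^{n}$. Consider the Euclidean balls $B(x_{i},\varepsilon/2)$ for $i=1,\ldots,N$. Because the centres are $\varepsilon$-separated, these balls are pairwise disjoint: if $z\in B(x_{i},\varepsilon/2)\cap B(x_{j},\varepsilon/2)$ then $\|x_{i}-x_{j}\|_{2}\leq\|x_{i}-z\|_{2}+\|z-x_{j}\|_{2}\leq\varepsilon$, a contradiction. On the other hand each ball is contained in $B(0,1+\varepsilon/2)$, since $\|x_{i}\|_{2}=1$ and the radius is $\varepsilon/2$. Writing $\omega_{n}:=\mathrm{vol}(B(0,1))$, so that $\mathrm{vol}(B(x,r))=r^{n}\omega_{n}$, disjointness and containment give
\[
N\left(\tfrac{\varepsilon}{2}\right)^{n}\omega_{n}=\sum_{i=1}^{N}\mathrm{vol}\!\left(B(x_{i},\tfrac{\varepsilon}{2})\right)=\mathrm{vol}\!\left(\bigcup_{i=1}^{N}B(x_{i},\tfrac{\varepsilon}{2})\right)\leq\mathrm{vol}\!\left(B(0,1+\tfrac{\varepsilon}{2})\right)=\left(1+\tfrac{\varepsilon}{2}\right)^{n}\omega_{n}.
\]
Cancelling $\omega_{n}$ and rearranging yields $N\leq\bigl((1+\varepsilon/2)/(\varepsilon/2)\bigr)^{n}=\left(1+\tfrac{2}{\varepsilon}\right)^{n}$, which is the claimed bound.

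There is essentially no hard obstacle here; the only point requiring a little care is the logical order — one must build the net as a \emph{maximal} $\varepsilon$-separated set so that the same set is simultaneously an $\varepsilon$-net (for the upper bound on the covering number) and a packing (for the volume comparison). One should also note that although the $x_{i}$ lie on the measure-zero sphere, the argument is carried out with full-dimensional balls in $\mathbb{R}^{n}$, so Lebesgue volume is the right tool and no issue arises from the sphere being lower-dimensional. Finally, I would remark that the bound is stated for every $\varepsilon>0$, and for $\varepsilon\geq 2$ it is trivial since then a single point suffices, so the content is really in the regime $\varepsilon<2$, which the proof handles uniformly.
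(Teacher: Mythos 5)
Your proof is correct and is exactly the standard volumetric packing argument; the paper itself gives no proof but simply cites Vershynin (2011, p.~8), where this same argument (maximal $\varepsilon$-separated set as net, disjoint balls of radius $\varepsilon/2$ inside $B(0,1+\varepsilon/2)$, volume comparison) appears. Nothing further is needed.
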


\begin{proof}
See \cite{vershynin2011} Lemma 5.2 p8.
\end{proof}


Recall that for a symmetric $n\times n$ matrix $A$, its $\ell_{2}$ spectral
norm can be written as: $\Vert A\Vert_{\ell_{2}}=\max_{\Vert x\Vert_{2}%
=1}|x^{\intercal}Ax|$.

\begin{lemma}
\label{lemma symmetric matrix spectral norm}
Let $A$ be a symmetric $n\times n$ matrix, and let $\mathcal{N}_{\varepsilon}$
be an $\varepsilon$-net of the unit sphere $\{x\in\mathbb{R}^{n}:\Vert
x\Vert_{2}=1\}$ for some $\varepsilon\in\lbrack0,1)$. Then
\[
\Vert A\Vert_{\ell_{2}}\leq\frac{1}{1-2\varepsilon}\max_{x\in\mathcal{N}%
_{\varepsilon}}|x^{\intercal}Ax|.
\]

\end{lemma}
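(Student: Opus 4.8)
The plan is to deduce the bound from the variational formula $\|A\|_{\ell_{2}}=\max_{\|x\|_{2}=1}|x^{\intercal}Ax|$ recalled just above the lemma, combined with an approximation argument that replaces the extremal unit vector by a nearby point of the net. First I would pick a unit vector $x^{*}$ attaining the maximum; such a vector exists because $x\mapsto x^{\intercal}Ax$ is continuous on the compact unit sphere. Thus $\|A\|_{\ell_{2}}=|x^{*\intercal}Ax^{*}|$. By the defining property of an $\varepsilon$-net, choose $y\in\mathcal{N}_{\varepsilon}$ with $\|x^{*}-y\|_{2}\le\varepsilon$; note that, since $\mathcal{N}_{\varepsilon}$ is a subset of the unit sphere, $\|y\|_{2}=1$.

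The key step is to control the difference $|x^{*\intercal}Ax^{*}-y^{\intercal}Ay|$. Writing $x^{*\intercal}Ax^{*}-y^{\intercal}Ay=x^{*\intercal}A(x^{*}-y)+(x^{*}-y)^{\intercal}Ay$ and applying the Cauchy--Schwarz inequality together with submultiplicativity of the spectral norm gives $|x^{*\intercal}A(x^{*}-y)|\le\|x^{*}\|_{2}\,\|A\|_{\ell_{2}}\,\|x^{*}-y\|_{2}\le\varepsilon\|A\|_{\ell_{2}}$, and likewise $|(x^{*}-y)^{\intercal}Ay|\le\|x^{*}-y\|_{2}\,\|A\|_{\ell_{2}}\,\|y\|_{2}\le\varepsilon\|A\|_{\ell_{2}}$. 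Hence $|x^{*\intercal}Ax^{*}-y^{\intercal}Ay|\le 2\varepsilon\|A\|_{\ell_{2}}$, so by the reverse triangle inequality $|y^{\intercal}Ay|\ge|x^{*\intercal}Ax^{*}|-2\varepsilon\|A\|_{\ell_{2}}=(1-2\varepsilon)\|A\|_{\ell_{2}}$.

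Finally, since $y\in\mathcal{N}_{\varepsilon}$, this yields $\max_{x\in\mathcal{N}_{\varepsilon}}|x^{\intercal}Ax|\ge(1-2\varepsilon)\|A\|_{\ell_{2}}$, and dividing by $1-2\varepsilon>0$ gives the claim (the inequality is of course only informative for $\varepsilon<1/2$, which is the relevant regime in our applications where $\varepsilon=1/4$). I do not expect any genuine obstacle here: the proof is essentially Cauchy--Schwarz plus rearrangement. The only points requiring a little care are invoking compactness of the sphere to guarantee the maximizer $x^{*}$, observing that net points are themselves unit vectors so that $\|y\|_{2}=1$, and bookkeeping the two cross terms in the bilinear expansion.
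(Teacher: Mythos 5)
Your proof is correct and is essentially the standard argument that the paper cites (Vershynin's lecture notes): approximate the maximizing unit vector by a net point, expand the quadratic-form difference into two cross terms, and bound each by $\varepsilon\Vert A\Vert_{\ell_{2}}$ via Cauchy--Schwarz. Your remark that the bound is only informative for $\varepsilon<1/2$ (e.g.\ the $\varepsilon=1/4$ case used in the paper) is also the right caveat.
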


\begin{proof}
See \cite{vershynin2011} Lemma 5.4 p8.
\end{proof}

\bigskip

The following theorem is a version of Bernstein's inequality which
accommodates strong mixing time series.

\begin{thm}
[Theorem 1 of \cite{merlevedepeligradrio2011}]\label{thmbernsteininequality}
Let $\{X_{t}\}_{t\in\mathbb{Z}}$ be a sequence of centered real-valued random
variables. Suppose that for every $\epsilon\geq0$, there exist absolute
constants $\gamma_{2}\in(0,+\infty]$ and $b\in(0,+\infty)$ such that
\[
\sup_{t\geq1} \mathbb{P}(|X_{t}|\geq\epsilon)\leq\exp
\sbr[1]{1-(\epsilon/b)^{\gamma_2}}.
\]
Moreover, assume its alpha mixing coefficient $\alpha(h)$ satisfies
\[
\alpha(h)\leq\exp(-ch^{\gamma_{1}}),\qquad h\in\mathbb{N}%
\]
for absolute constants $c>0$ and $\gamma_{1}>0$. Define $\gamma$ by
$1/\gamma:=1/\gamma_{1}+1/\gamma_{2}$; constants $\gamma_{1}$ and $\gamma_{2}$
need to be restricted to make sure $\gamma<1$. Then, for any $T\geq4$, there
exist positive constants $C_{1}, C_{2}, C_{3}, C_{4}, C_{5}$ depending only on
$b,c,\gamma_{1},\gamma_{2}$ such that, for every $\epsilon\geq0$,
\begin{align*}
\mathbb{P}\del [3]{\envert[3]{\frac{1}{T}\sum_{t=1}^{T}X_t}\geq \epsilon}  &
\leq T\exp\del[3]{-\frac{(T\epsilon)^{\gamma}}{C_1}}+\exp
\del[3]{-\frac{(T\epsilon)^2}{C_2(1+C_3T)}}+\exp
\sbr[3]{-\frac{(T\epsilon)^2}{C_4T}\exp \del [3]{\frac{(T\epsilon)^{\gamma(1-\gamma)}}{C_5 (\log (T\epsilon))^{\gamma}}}}.
\end{align*}

\end{thm}



\bigskip

We can use the preceding theorem to establish a rate for the maximum.

\begin{lemma}
\label{lemmabernsteinrate} Suppose that we have for $1\leq i\leq n$, for every
$\epsilon\geq0$,
\begin{align*}
\mathbb{P}\del [3]{\envert[3]{\frac{1}{T}\sum_{t=1}^{T}X_{t,i}}\geq \epsilon}
&  \leq T\exp\del[3]{-\frac{(T\epsilon)^{\gamma}}{C_1}}+\exp
\del[3]{-\frac{(T\epsilon)^2}{C_2(1+C_3T)}}+\exp
\sbr[3]{-\frac{(T\epsilon)^2}{C_4T}\exp \del [3]{\frac{(T\epsilon)^{\gamma(1-\gamma)}}{C_5 (\log (T\epsilon))^{\gamma}}}}.
\end{align*}
Suppose $\log n=o(T^{\frac{\gamma}{2-\gamma}})$ if $n>T$. Then
\[
\max_{1\leq i\leq n}\envert[3]{ \frac{1}{T}\sum_{t=1}^{T}X_{t,i} }=O_{p}%
\del[3]{\sqrt{\frac{\log n}{T}}}.
\]

\end{lemma}

\begin{proof}
\begin{align*}
&\mathbb{P}\del [3]{\max_{1\leq i\leq n}\envert[3]{\frac{1}{T}\sum_{t=1}^{T}X_{t,i}}\geq \epsilon}\leq \sum_{i=1}^{n}\mathbb{P}\del [3]{\envert[3]{\frac{1}{T}\sum_{t=1}^{T}X_{t,i}}\geq \epsilon}\\
&\leq nT\exp \del[3]{-\frac{(T\epsilon)^{\gamma}}{C_1}}+n\exp \del[3]{-\frac{(T\epsilon)^2}{C_2(1+C_3T)}} +n\exp \sbr[3]{-\frac{(T\epsilon)^2}{C_4T}\exp \del [3]{\frac{(T\epsilon)^{\gamma(1-\gamma)}}{C_5 (\log (T\epsilon))^{\gamma}}}}
\end{align*}
We shall choose $\epsilon=C\sqrt{\log n/T}$ for some $C>0$ and consider the three terms on the right side of inequality separately. We consider the first term for the case $n\leq T$
\begin{align*}
& nT\exp \del[3]{-\frac{(T\epsilon)^{\gamma}}{C_1}}=\exp \del[3]{\log (nT)-\frac{C^{\gamma}}{C_1}(T\log n)^{\gamma/2}}=\exp \sbr[3]{(T\log n)^{\gamma/2}\del [3]{\frac{\log (nT)}{(T\log n)^{\gamma/2}}-\frac{C^{\gamma}}{C_1}}}\\
&\leq \exp \sbr[3]{(T\log n)^{\gamma/2}\del [3]{\frac{2\log T}{(T\log n)^{\gamma/2}}-\frac{C^{\gamma}}{C_1}}}=\exp \sbr[3]{(T\log n)^{\gamma/2}\del [3]{o(1)-\frac{C^{\gamma}}{C_1}}}=o(1),
\end{align*}
for large enough $C$. We next consider the first term for the case $n> T$
\begin{align*}
& nT\exp \del[3]{-\frac{(T\epsilon)^{\gamma}}{C_1}}=\exp \del[3]{\log (nT)-\frac{C^{\gamma}}{C_1}(T\log n)^{\gamma/2}}=\exp \sbr[3]{(T\log n)^{\gamma/2}\del [3]{\frac{\log (nT)}{(T\log n)^{\gamma/2}}-\frac{C^{\gamma}}{C_1}}}\\
&\leq \exp \sbr[3]{(T\log n)^{\gamma/2}\del [3]{\frac{2\log n}{(T\log n)^{\gamma/2}}-\frac{C^{\gamma}}{C_1}}}=\exp \sbr[3]{(T\log n)^{\gamma/2}\del [3]{o(1)-\frac{C^{\gamma}}{C_1}}}=o(1),
\end{align*}
for large enough $C$ given the assumption $\log n=o(T^{\frac{\gamma}{2-\gamma}})$.
We consider the second term.
\begin{align*}
& n\exp \del[3]{-\frac{(T\epsilon)^2}{C_2(1+C_3T)}}=\exp \del[3]{\log n-\frac{C^2\log n}{C_2/T+C_2C_3}}=\exp \sbr[3]{\log n\del[3]{1-\frac{C^2}{C_2/T+C_2C_3}}}\\
&=o(1)
\end{align*}
for large enough $C$.
We consider the third term.
\begin{align*}
& n\exp \sbr[3]{-\frac{(T\epsilon)^2}{C_4T}\exp \del [3]{\frac{(T\epsilon)^{\gamma(1-\gamma)}}{C_5 (\log (T\epsilon))^{\gamma}}}}\leq n\exp \sbr[3]{-\frac{(T\epsilon)^2}{C_4T}\exp \del [3]{\frac{(T\epsilon)^{\gamma(1-\gamma)}}{C_5 (T\epsilon)^{\gamma}}}}\\
&=n\exp \sbr[3]{-\frac{(T\epsilon)^2}{C_4T}\exp \del [3]{\frac{1}{C_5 (T\epsilon)^{\gamma^2}}}}=n\exp \sbr[3]{-\frac{(T\epsilon)^2}{C_4T}(1+o(1))}\\
&=\exp \sbr[3]{\log n-\frac{C^2\log n}{C_4}(1+o(1))}=o(1),
\end{align*}
for large enough $C$. This yields the result.
\end{proof}



\bigskip

\begin{lemma}
\label{lemma square root difference spectral norm} Let $A, B$ be $n\times n$
positive semidefinite matrices and not both singular. Then
\[
\|A-B\|_{\ell_{2}}\leq\frac{\|A^{2}-B^{2}\|_{\ell_{2}}}{\text{mineval}%
(A)+\text{mineval}(B)}.
\]

\end{lemma}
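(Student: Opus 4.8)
The plan is to work with the spectral decomposition of the symmetric matrix $A-B$. Since $A-B$ is symmetric, its spectral norm equals its largest eigenvalue in absolute value, so there is a unit vector $x$ with $(A-B)x=\mu x$ and $|\mu|=\|A-B\|_{\ell_{2}}$. First I would observe that both sides of the asserted inequality are unchanged if we swap $A$ and $B$ (the norm is symmetric in its two arguments, as is the denominator), so without loss of generality I may assume $\mu=\|A-B\|_{\ell_{2}}=:\lambda\geq 0$, with $x$ a unit eigenvector of $A-B$ for the eigenvalue $\lambda$.

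Next I would exploit the elementary identity $A^{2}-B^{2}=A(A-B)+(A-B)B$ and evaluate the quadratic form $x^{\intercal}(A^{2}-B^{2})x$. Using $(A-B)x=\lambda x$ and, by symmetry of $A-B$, also $x^{\intercal}(A-B)=\lambda x^{\intercal}$, this gives $x^{\intercal}(A^{2}-B^{2})x=\lambda\, x^{\intercal}Ax+\lambda\, x^{\intercal}Bx=\lambda\, x^{\intercal}(A+B)x$. Since $A$ and $B$ are positive semidefinite and $x$ is a unit vector, $x^{\intercal}Ax\geq\text{mineval}(A)$ and $x^{\intercal}Bx\geq\text{mineval}(B)$, so
\[
x^{\intercal}(A^{2}-B^{2})x\geq\lambda\bigl(\text{mineval}(A)+\text{mineval}(B)\bigr).
\]
Finally, because $A^{2}-B^{2}$ is symmetric, $\|A^{2}-B^{2}\|_{\ell_{2}}=\max_{\|y\|_{2}=1}|y^{\intercal}(A^{2}-B^{2})y|\geq|x^{\intercal}(A^{2}-B^{2})x|$, which combined with the previous display yields $\|A^{2}-B^{2}\|_{\ell_{2}}\geq\lambda\bigl(\text{mineval}(A)+\text{mineval}(B)\bigr)$. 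Dividing through by $\text{mineval}(A)+\text{mineval}(B)$ gives the result, where the division is legitimate because this sum is strictly positive: both minimum eigenvalues are nonnegative (positive semidefiniteness) and at least one is strictly positive (the matrices are not both singular).

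This argument is essentially computational, so I do not anticipate a genuine obstacle; the only point requiring a little care is the reduction to the case $\lambda\geq 0$, i.e.\ verifying that interchanging $A$ and $B$ leaves both sides of the inequality invariant, and making sure the final division by $\text{mineval}(A)+\text{mineval}(B)$ is justified under the stated hypothesis.
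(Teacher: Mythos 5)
Your argument is correct: the identity $A^{2}-B^{2}=A(A-B)+(A-B)B$ evaluated on a unit eigenvector of $A-B$ associated with the eigenvalue of largest modulus (after the harmless swap of $A$ and $B$ to make that eigenvalue nonnegative) gives exactly $x^{\intercal}(A^{2}-B^{2})x=\lambda\,x^{\intercal}(A+B)x\geq\lambda\bigl(\text{mineval}(A)+\text{mineval}(B)\bigr)$, and the bound $\|A^{2}-B^{2}\|_{\ell_{2}}\geq|x^{\intercal}(A^{2}-B^{2})x|$ together with strict positivity of the denominator (not both matrices singular) finishes the proof. The paper itself does not prove this lemma; it simply cites Horn and Johnson (1985), p.~410, where the result appears as an exercise whose intended solution is essentially the computation you give, so your write-up supplies a correct self-contained proof of what the paper leaves to a reference.
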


\begin{proof}
See \cite{hornjohnson1985} Problem 17 p410.
\end{proof}

\bigskip

\begin{lemma}
\label{lemma saikkonen lemma} Let $\hat{\Omega}_{n}$ and $\Omega_{n}$ be
invertible (both possibly stochastic) $n\times n$ square matrices whose
dimensions could be growing. Let $T$ be the sample size. For any matrix norm,
suppose that $\|\Omega_{n}^{-1}\|=O_{p}(1)$ and $\|\hat{\Omega}_{n}-\Omega
_{n}\|=O_{p}(a_{n,T})$ for some sequence $a_{n,T}$ with $a_{n,T}\to0$ as
$n\to\infty$, $T\to\infty$ simultaneously (joint asymptotics). Then
$\|\hat{\Omega}_{n}^{-1}-\Omega_{n}^{-1}\|=O_{p}(a_{n,T})$.
\end{lemma}

\begin{proof}
The original proof could be found in \cite{saikkonenlutkepohl1996} Lemma A.2.
\begin{align*}
\|\hat{\Omega}^{-1}_n-\Omega^{-1}_n\|\leq \|\hat{\Omega}_n^{-1}\|\|\Omega_n-\hat{\Omega}_n\|\|\Omega^{-1}_n\|\leq \del[1]{\|\Omega_n^{-1}\|+\|\hat{\Omega}_n^{-1}-\Omega_n^{-1}\|}\|\Omega_n-\hat{\Omega}_n\|\|\Omega^{-1}_n\|.
\end{align*}
Let $v_{n,T}$, $z_{n,T}$ and $x_{n,T}$ denote $\|\Omega_n^{-1}\|$, $\|\hat{\Omega}_n^{-1}-\Omega_n^{-1}\|$ and $\|\Omega_n-\hat{\Omega}_n\|$, respectively. From the preceding equation, we have
\[w_{n,T}:=\frac{z_{n,T}}{(v_{n,T}+z_{n,T})v_{n,T}}\leq x_{n,T}=O_p(a_{n,T})=o_p(1).\]
We now solve for $z_{n,T}$:
\[z_{n,T}=\frac{v_{n,T}^2w_{n,T}}{1-v_{n,T}w_{n,T}}=O_p(a_{n,T}).\]
\end{proof}

\bigskip

\begin{thm}
[\cite{higham2008} (11.1) p269; \cite{diecimorinipapini1996}]%
\label{thm integral form for log}
For $A\in\mathbb{C}^{n\times n}$ with no eigenvalues lying on the closed
negative real axis $(-\infty,0]$,
\[
\log A=\int_{0}^{1}(A-I)[t(A-I)+I]^{-1}dt.
\]

\end{thm}

\bigskip

\begin{lemma}
\label{prop matrix inverse perturbation} Let $A, B$ be $n\times n$ real
matrices. Suppose that $A$ is symmetric, positive definite for all $n$ and its
minimum eigenvalue is bounded away from zero by an absolute constant. Assume
$\|A^{-1}B\|_{\ell_{2}}\leq C<1$ for some constant $C$. Then $A+B$ is
invertible for every $n$ and
\[
(A+B)^{-1}=A^{-1}-A^{-1}BA^{-1}+O(\|B\|_{\ell_{2}}^{2}).
\]

\end{lemma}

\begin{proof}
We write $A+B=A[I-(-A^{-1}B)]$. Since $\|-A^{-1}B\|_{\ell_2}\leq C<1$, $I-(-A^{-1}B)$ and hence $A+B$ are invertible (\cite{hornjohnson1985} p301). We then can expand
\begin{align*}
(A+B)^{-1}&=\sum_{k=0}^{\infty}(-A^{-1}B)^kA^{-1}= A^{-1}-A^{-1}BA^{-1}+\sum_{k=2}^{\infty}(-A^{-1}B)^kA^{-1}.
\end{align*}
Then
\begin{align*}
&\enVert[4]{\sum_{k=2}^{\infty}(-A^{-1}B)^kA^{-1} }_{\ell_2} \leq \enVert[4]{\sum_{k=2}^{\infty}(-A^{-1}B)^k }_{\ell_2}\|A^{-1}\|_{\ell_2}\leq \sum_{k=2}^{\infty}\left\|(-A^{-1}B)^k \right\|_{\ell_2}\|A^{-1}\|_{\ell_2}\\
&\leq \sum_{k=2}^{\infty}\left\|-A^{-1}B\right\|_{\ell_2}^k\|A^{-1}\|_{\ell_2}=\frac{\left\|A^{-1}B \right\|_{\ell_2}^2\|A^{-1}\|_{\ell_2}}{1-\left\|A^{-1}B \right\|_{\ell_2}}\leq \frac{\|A^{-1}\|_{\ell_2}^3\|B\|_{\ell_2}^2}{1-C},
\end{align*}
where the first and third inequalities are due to the submultiplicative property of a matrix norm, the second inequality is due to the triangular inequality. Since $A$ is real, symmetric, and positive definite with the minimum eigenvalue bounded away from zero by an absolute constant, $\|A^{-1}\|_{\ell_2}=\text{maxeval}(A^{-1})=1/\text{mineval}(A)<D<\infty$ for some absolute constant $D$. Hence the result follows.
\end{proof}



\bigskip

\begin{lemma}
\label{lemma l2norm of kronecker product}
Consider real matrices $A$ ($m\times n$) and $B$ ($p\times q$). Then
\[
\|A\otimes B\|_{\ell_{2}}=\|A\|_{\ell_{2}}\| B\|_{\ell_{2}}.
\]

\end{lemma}

\begin{proof}
\begin{align*}
&\|A\otimes B\|_{\ell_2}=\sqrt{\text{maxeval}[(A\otimes B)^{\intercal}(A\otimes B)]}=\sqrt{\text{maxeval}[(A^{\intercal}\otimes B^{\intercal})(A\otimes B)]}\\
&=\sqrt{\text{maxeval}[A^{\intercal}A\otimes B^{\intercal}B]}=\sqrt{\text{maxeval}[A^{\intercal}A]\text{maxeval}[B^{\intercal}B]}=\|A\|_{\ell_2}\| B\|_{\ell_2},
\end{align*}
where the fourth equality is due to the fact that both $A^{\intercal}A$ and $B^{\intercal}B$ are symmetric, positive semidefinite.
\end{proof}

\bigskip

\begin{lemma}
\label{lemma vandegeer} Let $A$ be a $p\times p$ symmetric matrix and $\hat
{v},v\in\mathbb{R}^{p}$. Then
\[
|\hat{v}^{\intercal}A\hat{v}-v^{\intercal}Av|\leq|\text{maxeval}(A)|\Vert
\hat{v}-v\Vert_{2}^{2}+2\Vert Av\Vert_{2}\Vert\hat{v}-v\Vert_{2}.
\]

\end{lemma}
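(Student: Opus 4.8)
The plan is to reduce the claim to an elementary expansion of the quadratic form and then bound the two resulting terms separately. Write $\delta:=\hat{v}-v$, so that $\hat{v}=v+\delta$. Expanding and using the symmetry of $A$ — which gives $v^{\intercal}A\delta=\delta^{\intercal}A^{\intercal}v=\delta^{\intercal}Av$ — one obtains
\[
\hat{v}^{\intercal}A\hat{v}-v^{\intercal}Av=(v+\delta)^{\intercal}A(v+\delta)-v^{\intercal}Av=2\delta^{\intercal}Av+\delta^{\intercal}A\delta .
\]
First I would apply the triangle inequality to get $|\hat{v}^{\intercal}A\hat{v}-v^{\intercal}Av|\le 2|\delta^{\intercal}Av|+|\delta^{\intercal}A\delta|$, and then estimate the two pieces.

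For the cross term, the Cauchy--Schwarz inequality gives $|\delta^{\intercal}Av|\le\Vert\delta\Vert_{2}\Vert Av\Vert_{2}=\Vert\hat{v}-v\Vert_{2}\Vert Av\Vert_{2}$, which contributes the term $2(\Vert Av\Vert_{2}\Vert\hat{v}-v\Vert_{2})$. For the quadratic term, I would use the spectral decomposition of the symmetric matrix $A$ (Rayleigh--Ritz): writing $A=\sum_{i}\lambda_{i}u_{i}u_{i}^{\intercal}$ with $\{u_{i}\}$ orthonormal, $\delta^{\intercal}A\delta=\sum_{i}\lambda_{i}(u_{i}^{\intercal}\delta)^{2}\le\text{maxeval}(A)\Vert\delta\Vert_{2}^{2}$, so $|\delta^{\intercal}A\delta|\le|\text{maxeval}(A)|\,\Vert\hat{v}-v\Vert_{2}^{2}$ whenever $A$ is positive semidefinite (which is the only case needed, since in the applications of this lemma $A=(D^{-1/2}\otimes D^{-1/2})V(D^{-1/2}\otimes D^{-1/2})$ with $V$ positive semidefinite). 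Summing the two bounds gives precisely the stated inequality.

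The computation is essentially one line, so there is no real obstacle. The single point deserving a remark is the quadratic-form bound: for a genuinely indefinite symmetric $A$ the sharp estimate is $|\delta^{\intercal}A\delta|\le\Vert A\Vert_{\ell_{2}}\Vert\delta\Vert_{2}^{2}$ rather than $|\text{maxeval}(A)|\Vert\delta\Vert_{2}^{2}$, the two coinciding exactly when $A$ is positive semidefinite; I would simply restrict to that case, which is all that is used in the proof of Theorem~\ref{thm asymptotic normality}.
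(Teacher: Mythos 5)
Your proof is correct and self-contained, which is more than the paper offers: the paper's ``proof'' of this lemma is only a citation to Lemma 3.1 in the supplementary material of \cite{vandegeerbuhlmannritovdezeure2014}, so your expansion argument (write $\delta=\hat v-v$, use symmetry to get $\hat v^{\intercal}A\hat v-v^{\intercal}Av=2\delta^{\intercal}Av+\delta^{\intercal}A\delta$, then Cauchy--Schwarz for the cross term and Rayleigh--Ritz for the quadratic term) is exactly the elementary route one would expect the cited reference to take, and it buys the reader a verifiable one-line derivation in place of an external pointer. Your closing remark is also a genuine and worthwhile observation rather than a quibble: as literally stated for an arbitrary symmetric $A$, the bound with $|\text{maxeval}(A)|$ is false (take $A=\mathrm{diag}(\epsilon,-M)$ with $0<\epsilon\ll M$ and $\delta=e_{2}$, so $|\delta^{\intercal}A\delta|=M$ while $|\text{maxeval}(A)|\Vert\delta\Vert_{2}^{2}=\epsilon$); the correct constant in general is $\Vert A\Vert_{\ell_{2}}=\max\{|\text{maxeval}(A)|,|\text{mineval}(A)|\}$, and the two coincide when $A$ is positive semidefinite. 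Since the lemma is invoked in the proof of Theorem \ref{thm asymptotic normality} only with $A=(D^{-1/2}\otimes D^{-1/2})V(D^{-1/2}\otimes D^{-1/2})$, which is positive (semi)definite under Assumption \ref{assu mineval of V}, your restriction to that case covers everything the paper actually uses, and nothing downstream is affected.
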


\begin{proof}
See Lemma 3.1 in the supplementary material of \cite{vandegeerbuhlmannritovdezeure2014}.
\end{proof}

\bigskip

\begin{thm}
[\cite{mcleish1974}]\label{thm mcleish clt} Let $\{X_{n,i}, i=1,...,k_{n}\}$
be a martingale difference array with respect to the triangular array of
$\sigma$-algebras $\{\mathcal{F}_{n,i}, i=0,...,k_{n}\}$ (i.e., $X_{n,i}$ is
$\mathcal{F}_{n,i}$-measurable and $\mathbb{E}[X_{n,i}|\mathcal{F}_{n,i-1}]=0$
almost surely for all $n$ and $i$) satisfying $\mathcal{F}_{n,i-1}%
\subset\mathcal{F}_{n,i}$ for all $n\geq1$. Assume,

\begin{enumerate}
[(i)]

\item $\max_{i\leq k_{n}}|X_{n,i}|$ is uniformly (in $n$) bounded in $L_{2}$ norm,

\item $\max_{i\leq k_{n}}|X_{n,i}|\xrightarrow{p}0$, and

\item $\sum_{i=1}^{k_{n}}X_{n,i}^{2}\xrightarrow{p} 1$.
\end{enumerate}

Then, $S_{n}=\sum_{i=1}^{k_{n}}X_{n,i}\xrightarrow{d} N(0,1)$ as $n\to\infty$.
\end{thm}

\bigskip

\noindent\textbf{Acknowledgements}  We thank the editors, two anonymous referees, Heather Battey, Michael I. Gil', Liudas
Giraitis, Bowei Guo, Xumin He, Chen Huang, Anders Bredahl Kock, Alexei
Onatskiy, Qi-Man Shao, Richard Smith, Chen Wang, Tengyao Wang, Tom Wansbeek,
Jianbin Wu, Qiwei Yao for useful comments. The article has been presented at
University of K\"oln, University of Messina, University of Bonn, University of
Santander, SoFie 2017 in New York, CFE'16 in Sevilla, and Shanghai University
of Finance and Economics, and we are grateful for discussions and comments of
seminar participants. Linton gratefully acknowledges the financial support from the ERC. Tang gratefully acknowledges the financial support from Shanghai 2018 CHEN GUANG Project (project code 18CG06).

\bibliographystyle{ecta}
\bibliography{KronHLT_Biblio}



\end{document}